\definecolor{fixedgreen}{rgb}{0.1,0.5,0.1}
\newlength{\bibitemsep}\setlength{\bibitemsep}{.2\baselineskip plus .05\baselineskip minus .05\baselineskip}
\newlength{\bibparskip}\setlength{\bibparskip}{10pt}
\let\oldthebibliography\thebibliography
\renewcommand\thebibliography[1]{%
  \oldthebibliography{#1}%
  \setlength{\parskip}{\bibitemsep}%
  \setlength{\itemsep}{\bibparskip}%
}
\newtheorem{thm}[subsection]{Theorem}
\newtheorem{prop}[subsection]{Proposition}
\newtheorem{cor}[subsection]{Corollary}
\newtheorem{lemma}[subsection]{Lemma}
\theoremstyle{definition}  
\newtheorem{example}[subsection]{Example}
\newtheorem{warning}[subsection]{Warning}
\newtheorem{remark}[subsection]{Remark}
\newcommand{\dfn}{\textbf} 
\newcommand{\mdfn}[1]{\dfn{\mathversion{bold}#1}} 
\newcommand{\Smash}             {\wedge}
\newcommand{\field}[1]  {\mathbb #1} 
\newcommand{\R}         {\field R}
\newcommand{\Z}         {\field Z}
\newcommand{\F}         {\field F}
\newcommand{\A}         {\field A}
\DeclareMathOperator{\Hom}{Hom}
\DeclareMathOperator{\cok}{cok}
\DeclareMathOperator{\lcm}{lcm}
\newcommand{\Mt}{\mathbb{M}_2}
\newcommand{\Mtb}[1]{\mathbb{M}_2 \langle {#1} \rangle}
\newcommand{\Ct}{C_2}
\newcommand{\pars}[1]{\left({#1}\right)} 
\begin{document}


\title{A Structure Theorem for $RO(C_2)$-graded Bredon Cohomology}

\author{Clover May}

\begin{abstract}
Let $C_2$ be the cyclic group of order two.  We present a structure theorem for the $RO(C_2)$-graded Bredon cohomology of $C_2$-spaces using coefficients in the constant Mackey functor $\underline{\F_2}$.  We show that, as a module over the cohomology of the point, the $RO(C_2)$-graded cohomology of a finite $C_2$-CW complex decomposes as a direct sum of two basic pieces: shifted copies of the cohomology of a point and shifted copies of the cohomologies of spheres with the antipodal action.  The shifts are by elements of $RO(C_2)$ corresponding to actual (i.e.\ non-virtual) $C_2$-representations.  This decomposition lifts to a splitting of genuine $C_2$-spectra.
\end{abstract}

\maketitle

\tableofcontents


\section{Introduction}
For $RO(C_2)$-graded Bredon cohomology, working with coefficients in the constant Mackey functor $\underline{\F_2}$ is the closest analogue to using $\F_2$ coefficients for singular cohomology.  One might expect computations to be fairly straightforward in this setting, as they are in singular cohomology.  Unfortunately, these computations are often nontrivial even for simple $C_2$-spaces.  The goal of this paper is to give a structure theorem for the $RO(C_2)$-graded cohomology of finite $C_2$-CW complexes with coefficients in $\underline{\F_2}$.  This structure theorem can be used to make computations easier.  The corresponding splitting at the spectrum level is a first step toward understanding the category of modules over the genuine equivariant Eilenberg--MacLane spectrum $H{\underline{\F_2}}$.

Let $\Mt$ denote the $RO(C_2)$-graded cohomology of a point.  Let $\A_n$ denote the cohomology of the $n$-dimensional sphere with the antipodal action.  We will show that if $X$ is a finite $C_2$-CW complex, then its cohomology contains only shifted copies of $\Mt$ and shifted copies of $\A_n$ for various $n$.  A bit more precisely, as an $\Mt$-module we can decompose the cohomology of $X$ as
\[
H^{*,*}(X;\underline{\F_2}) \cong (\oplus_i \Sigma^{p_i,q_i} \Mt) \oplus (\oplus_j \Sigma^{r_j,0} \A_{n_j})
\]
for some dimensions $n_j$ and bidegrees $(p_i,q_i)$ and $(r_j,0)$ that correspond to actual (i.e.\ non-virtual) $C_2$-representations so that $0 \leq q_i \leq p_i$ and $0 \leq r_j$.

Each copy of $\Sigma^{p_i,q_i}\Mt$ is the reduced cohomology of a representation sphere $S^{p_i,q_i}$.  So in some sense the structure theorem means the cohomology of any finite $C_2$-CW complex looks like cohomologies of representations spheres and suspensions of antipodal spheres.  At first glance, this might appear obvious because spheres are the building blocks for CW-complexes.  However, we will see it is actually rather surprising that we only need these two types of objects to describe $RO(C_2)$-graded cohomology in $\underline{\F_2}$ coefficients.  The analogous statement for singular cohomology in $\F_2$ coefficients is trivial because the coefficient ring $\F_2$ is a field.  But the coefficient ring $\Mt$ is not a field and there are many $\Mt$-modules that do not appear as the cohomology of a space.  Even the modules that arise in computations as kernels and cokernels of differentials are typically more complicated than simply shifted copies of $\Mt$ and $\A_n$.

As a consequence of the structure theorem we will prove a corresponding splitting of spectra.  In the context of spectra we use the notation $\Sigma^{\infty}X$ and $X$ interchangeably for the genuine equivariant suspension spectrum of a based $C_2$-CW complex $X$. Let $H\underline{\F_2}$ denote the genuine equivariant Eilenberg--MacLane spectrum representing $RO(C_2)$-graded Bredon cohomology with $\underline{\F_2}$-coefficients.  We will show there is a decomposition of $\Sigma^{\infty} X \Smash H\underline{\F_2}$ into a wedge as follows
\[
\Sigma^{\infty} X \Smash H\underline{\F_2} \simeq \pars{\bigvee_i S^{p_i,q_i} \Smash H\underline{\F_2}} \vee \pars{\bigvee_j S^{r_j,0} \Smash {S^{n_j}_a}_+ \Smash H\underline{\F_2}}.
\]
Furthermore if $Y$ is any genuine equivariant finite $C_2$-CW spectrum, we obtain a similar decomposition of $Y \Smash H\underline{\F_2}$, though the shifts may now correspond to virtual representations.

One might hope to generalize the structure theorem in several ways.  Surprisingly, the theorem does not immediately generalize to locally finite $C_2$-CW complexes.  We will see an example of a locally finite but infinite-dimensional $C_2$-CW complex whose cohomology cannot be decomposed as shifted copies of $\Mt$ and shifted copies of $\A_n$.  A complete structure theorem for the cohomology of locally finite $C_2$-CW complexes is not yet known.  Furthermore, the structure theorem presented here only describes the cohomology of a finite $C_2$-CW complex as an $\Mt$-module.  The cohomology as a ring or as a module over the equivariant Steenrod algebra is not well understood.

For computations, it would be useful to have similar structure theorems for the cohomology of $C_2$-CW complexes with coefficients in different Mackey functors.  A structure theorem for $\underline{\F_p}$-coefficients with $p$ an odd prime is trivial and not particularly interesting.  A structure theorem for $\underline{\Z}$-coefficients likely exists but would be considerably more complicated than the one presented here.  Other coefficients such as the Burnside Mackey functor have yet to be explored in this regard.

One might also consider working over other finite groups.  There is currently work in progress toward a structure theorem for $C_p$, the cyclic group of prime order $p$.  In this setting Ferland \cite{F} has computed the cohomology of a point with coefficients in any Mackey functor, building on work of Lewis \cite{L} and unpublished work of Stong.  We expect a somewhat similar structure theorem for the cohomology of finite $C_p$-CW complexes with $\underline{\F_p}$-coefficients will hold.  Unlike $C_2$, however, it is clear such a structure theorem will need to include cohomologies of spaces that are not spheres.  Finding analogous structure theorems for other finite groups remains an open problem.

\subsection{Proof sketch} We briefly outline the proof of the structure theorem from Section \ref{The main theorem} as a guide for the reader.  The ring $\Mt$ is infinite but can be described in terms of particular elements $\rho, \tau$, and $\theta$.  The ring $\A_n$ is isomorphic to $\F_2[\tau,\tau^{-1},\rho]/(\rho^{n+1})$.  Let $X$ be a finite $C_2$-CW complex.  The proof that its cohomology contains only shifted copies of $\Mt$ and shifted copies of $\A_n$
begins by showing that copies of $\Mt$ in $H^{*,*}(X)$ are easily detected by $\theta$.  Accounting for each copy of $\Mt$, we will obtain a short exact sequence of the form
\[
0 \to \oplus_i \Sigma^{p_i,q_i}\Mt \to H^{*,*}(X) \to Q \to 0,
\]
which splits because $\Mt$ is self-injective.  Finally, we will show $Q \cong \oplus_j \Sigma^{r_j,0}\A_{n_j}$.  This will follow from a result about the $\rho$-localization of $H^{*,*}(X)$ and a rather surprising higher decomposition of 1 in $\Mt$ given by the Toda bracket $\langle \tau, \theta, \rho \rangle = 1$.  Together, we will use $\rho$-localization and the Toda bracket to show $Q$ is a finitely generated $\F_2[\tau,\tau^{-1},\rho]$-module, i.e.\ $Q$ is a module over a graded PID.  The graded analogue of the classification of finitely generated modules over a PID completes the proof.

\subsection{Organization of the paper} In Sections \ref{Preliminaries} and \ref{Computational tools} we provide some of the required background and set some notation and terminology.  Section \ref{Graded modules and cohomology} includes several important facts about $\Mt$-modules and their implications for the cohomology of $\Ct$-spaces.
This section includes the computation of the nontrivial Toda bracket of $1$ in $\Mt$ mentioned above.  In Section \ref{The main theorem} we prove the main theorem.  Section \ref{Applications} demonstrates some applications of the main theorem, including the spectrum level splitting.
Appendix \ref{Injectivity} is devoted to the proof of a technical proposition from Section \ref{Graded modules and cohomology}.

\subsection{Acknowledgements}  Much of the work presented here is part of the author's doctoral dissertation at the University of Oregon.  The author would like first and foremost to thank her thesis advisor Dan Dugger for his guidance.  The author would also like to thank Dan Isaksen for suggesting the Toda brackets presented here, Dylan Wilson and Michael Andrews for contributions to the spectrum level splitting, and Eric Hogle for many invaluable conversations.  Finally, thank you to the anonymous referee for helpful comments and suggestions.


\section{Preliminaries}\label{Preliminaries}

We begin with some terminology and notation, much of which can be found in \cite{M}, \cite{S}, \cite{DAH}, and \cite{K}.  Let $G$ be a finite group.  A \mdfn{$G$-CW complex} $X$ is a filtration of a $G$-space formed inductively by attaching orbit cells $G/H \times D^n$ so the filtration quotients are of the form $X_n/X_{n-1} \cong \bigvee_{\alpha} {G/H_{\alpha}}_+ \Smash S^n$.  If the cellular filtration is finite we call $X$ \dfn{finite-dimensional} and refer to the highest dimension as the \mdfn{dimension of $X$}.
We call $X$ \dfn{locally finite} when there are fintely many cells of each dimension.  A \mdfn{finite $G$-CW complex} is both finite-dimensional and locally finite.  We study $G$-CW complexes in part because every CW complex with a cellular $G$-action can be given the structure of a $G$-CW complex (see \cite{M}).

It is often convenient to work with \mdfn{pointed $G$-CW complexes}, also called \mdfn{based $G$-CW complexes}, that have a fixed basepoint.  For an unbased space we can always form $X_+$, i.e.\ $X$ with a disjoint basepoint.  For a based $X$, we write $\Sigma^{\infty}X$ for the genuine equivariant suspension spectrum of $X$.  We will be working with $RO(G)$-graded cohomolgoy of $G$-CW complexes, where the coefficients are given by a Mackey functor.

We now specialize to the case $G = C_2$, the cyclic group of order two, and restrict our focus to $C_2$-CW complexes. We begin by setting some notation for the elements of $RO(C_2)$.  A $p$-dimensional real $C_2$-representation $V$ decomposes as
\[
V \cong (\R^{1,0})^{p-q} \oplus (\R^{1,1})^{q} = \R^{p,q}
\]
where $\R^{1,0}$ is the trivial $1$-dimensional real representation of $C_2$ and $\R^{1,1}$ is the sign representation.  We call $p$ the \dfn{topological dimension} and $q$ the \dfn{weight} or \dfn{twisted dimension} of $V = \R^{p,q}$.  We may also refer to the \dfn{fixed-set dimension}, which is $p-q$.  We use this same notation and terminology when $V$ is a virtual representation given by $V = \R^{p_1,q_1} - \R^{p_2,q_2}$, in which case we say that $V$ has topological dimension $p_1-p_2$, weight $q_1-q_2$, and fixed-set dimension $(p_1 - q_1) - (p_2 - q_2)$.
If $V = \R^{p,q}$ is an actual representation, we write $S^V = S^{p,q}$ for the \dfn{representation sphere} given by the one-point compactification of $V$.  Again, we use this same notation when $V$ is a virtual representation and we are working stably.

For any virtual representation $V = \R^{p,q}$, allowing $p$ and $q$ to be integers, we write $H^V_G(X;M) = H^{p,q}(X;M)$ for the $V$th graded component of the ordinary $RO(C_2)$-graded equivariant cohomology of a $C_2$-space $X$ with coefficients in a Mackey functor $M$.  For a based $C_2$-space,  $\tilde{H}^{*,*}(X;M)$ is the reduced cohomology and $\tilde{H}^{*,*}(X_+;M) = H^{*,*}(X;M)$.  For the rest of the paper we will use coefficients in $\underline{\F_2}$, the constant Mackey functor with value of $\F_2$.
We usually suppress the coefficients and simply write $H^{*,*}(X)$.  When we work non-equivariantly, $H^*_{sing}(X)$ is the singular cohomology with $\F_2$-coefficients of the underlying topological space $U(X)$, where $U$ is the forgetful functor.  The genuine equivariant Eilenberg--MacLane spectrum representing $\tilde{H}^{*,*}(-)$ is $H\underline{\F_2}$.  It has as its underlying spectrum $H\F_2$, the Eilenberg--MacLane spectrum representing $\tilde{H}^{*}_{sing}(-)$.

It is often convenient to plot the bigraded cohomology in the plane.  In these depictions we plot the topological dimension $p$ along the horizontal axis and the weight $q$ along the vertical axis.

\subsection{Cohomology of a point} Using coefficients in the constant Mackey functor $\underline{\F_2}$, the cohomology of a point with the trivial $C_2$-action is the ring denoted by $\Mt := H^{*,*}(pt;\underline{\F_2})$ pictured in Figure \ref{point}.  On the left is a more detailed depiction, though in practice it is easier to work with the more succinct version on the right.  Every lattice point inside the cones represents a copy of the group $\F_2$.  There are unique nonzero elements $\rho \in H^{1,1}(pt)$ and $\tau \in H^{0,1}(pt)$.  As an $\F_2[\rho,\tau]$-module $\Mt$ splits as $\Mt = \Mt^+ \oplus \Mt^-$
where the top cone $\Mt^+$ is a polynomial algebra with generators $\rho$ and $\tau$.  The bottom cone $\Mt^-$ has a unique nonzero element $\theta \in H^{0,-2}(pt)$ that is infinitely divisible by both $\rho$ and $\tau$ and satisfies $\theta^2 = 0$.  We say that every element of the lower cone is $\rho$-torsion, meaning it is zero when multiplied by some power of $\rho$.  Likewise, we say that every element of the lower cone is $\tau$-torsion, since every element is zero when multiplied by some power of $\tau$.  Notice the lower cone is a non-finitely generated ideal, so $\Mt$ is an infinitely generated commutative non-Noetherian $\F_2$-algebra.  This particular depiction first appears in \cite{K} and later in \cite{DGrass}, though the computation precedes these.  Building on unpublished work of Stong, the computation has been reproduced several times and appears in \cite{C} and \cite{F}.

\begin{figure}[ht]
\begin{center}\hfill
\begin{tikzpicture}[scale=0.6]
\draw[gray] (-3.5,0) -- (4.5,0) node[below, black] {\small $p$};
\draw[gray] (0,-4.5) -- (0,4.5) node[left, black] {\small $q$};
\foreach \x in {-3,...,-1,1,2,...,4}
	\draw [font=\tiny, gray] (\x cm,2pt) -- (\x cm,-2pt) node[anchor=north] {$\x$};
\foreach \y in {-4,...,-1,1,2,...,4}
	\draw [font=\tiny, gray] (2pt,\y cm) -- (-2pt,\y cm) node[anchor=east] {$\y$};

\foreach \y in {0,...,4}
	\fill (0,\y) circle(2pt);
\foreach \y in {1,...,4}
	\fill (1,\y) circle(2pt);
\foreach \y in {2,...,4}
	\fill (2,\y) circle(2pt);
\foreach \y in {3,...,4}
	\fill (3,\y) circle(2pt);
\foreach \y in {4,...,4}
	\fill (4,\y) circle(2pt);

\foreach \y in {0,...,2}
	\fill (0,-\y-2) circle(2pt);
\foreach \y in {1,...,2}
	\fill (-1,-\y-2) circle(2pt);
\foreach \y in {2,...,2}
	\fill (-2,-\y-2) circle(2pt);

\draw[thick,->] (0,0) -- (4.5,4.5);
\draw[thick,->] (0,0) -- (0,4.5);
\draw[thick,->] (0,-2) -- (0,-4.5);
\draw[thick,->] (0,-2) -- (-2.5,-4.5);

\draw (0,-0.3) node[below, right] {$1$};
\draw (1,1) node[right] {$\rho$};
\draw (0,1) node[right] {$\tau$};
\draw (0,-2) node[right] {$\theta$};
\draw (-1.1,-3) node[left] {$\frac{\theta}{\rho}$};
\draw (0,-3) node[right] {$\frac{\theta}{\tau}$};

\end{tikzpicture} \hfill
\begin{tikzpicture}[scale=0.6]
\draw[gray] (-3.5,0) -- (4.5,0) node[below, black] {\small $p$};
\draw[gray] (0,-4.5) -- (0,4.5) node[left, black] {\small $q$};
\foreach \x in {-3,...,-1,1,2,...,4}
	\draw [font=\tiny, gray] (\x cm,2pt) -- (\x cm,-2pt) node[anchor=north] {$\x$};
\foreach \y in {-4,...,-1,1,2,...,4}
	\draw [font=\tiny, gray] (2pt,\y cm) -- (-2pt,\y cm) node[anchor=east] {$\y$};
\draw[gray] (2pt, -1cm) -- (-2pt,-1cm);

\draw[thick] (0,0) -- (4.5,4.5);
\draw[thick] (0,0) -- (0,4.5);
\draw[thick] (0,-2) -- (0,-4.5);
\draw[thick] (0,-2) -- (-2.5,-4.5);

\fill (0,0) circle(2pt);
\draw[transparent] (5.5,0) node{$M2$};
\end{tikzpicture}
\end{center}
\caption{$\Mt = H^{*,*}(pt;\underline{\F_2})$.}
\label{point}
\end{figure}

Since there is always an equivariant map $X \to pt$ for any space $X$, its cohomology $H^{*,*}(X)$ is a bigraded $\Mt$-module.  We are interested in cohomology, so throughout this paper we are working in the category of bigraded $\Mt$-modules.  By $\Mt$-module we always mean bigraded $\Mt$-module, and any reference to an $\Mt$-module map means a bigraded homomorphism.  In general, computing the cohomology of a $C_2$-space, even as an $\Mt$-module, is nontrivial.

\subsection{Cohomology of the antipodal sphere}
Let $S^n_a$ denote the $n$-dimensional sphere with the antipodal $C_2$-action.  We write $\A_n$ for the cohomology of $S^n_a$ as an $\Mt$-module.  A picture of $\A_n$ is shown in Figure \ref{An}.  Again, on the left is a more detailed depiction (actually of $\A_4$), while in practice it is more convenient to draw the succinct version on the right.  Here every lattice point in the infinite strip of width $n+1$ represents an $\F_2$.  Diagonal lines represent multiplication by $\rho$ and vertical lines represent multiplication by $\tau$.  Every nonzero element in $\A_n$ is $\rho$-torsion, in the image of $\tau$, and not $\tau$-torsion.  We allow for $n = 0$ since $C_2 = S^0_a$
and the cohomology of $C_2$ can be depicted by a single vertical line.  As a ring $\A_n \cong \F_2[\tau,\tau^{-1},\rho]/(\rho^{n+1})$ where $\rho$ and $\tau$ correspond to multiplication by the usual elements in $\Mt$ and $\tau^{-1}$ has bidegree $(0,-1)$.
The computation of $\A_n$ can be found at the end of Section \ref{Computational tools}.

\begin{figure}[h]
\begin{center}\hfill
\begin{tikzpicture}[scale=0.6]
\draw[gray] (-1,0) -- (5.5,0) node[below, black] {\small $p$};
\draw[gray] (0,-4.3) -- (0,4.3) node[left, black] {\small $q$};
\draw [font=\small, gray] (-0.3,0) node[below] {$0$};
\draw [font=\small, gray] (4.3,0) node[below] {$n$};

\draw (2,5) node {$\vdots$};
\foreach \x in {0,...,4}
	\foreach \y in {-4,...,4}
		\fill (\x,\y) circle(2pt);

\foreach \x in {0,...,4}
	\draw[thick] (\x,-4.3) -- (\x,4.3);
\foreach \y in {-4,...,0}
	\draw[thick] (0,\y) -- (4,\y+4);

\draw[thick] (0,1) -- (3.3,4.3);
\draw[thick] (0,2) -- (2.3,4.3);
\draw[thick] (0,3) -- (1.3,4.3);
\draw[thick] (0,4) -- (0.3,4.3);

\draw[thick] (0.7,-4.3) -- (4,-1);
\draw[thick] (1.7,-4.3) -- (4,-2);
\draw[thick] (2.7,-4.3) -- (4,-3);
\draw[thick] (3.7,-4.3) -- (4,-4);
\draw[thick] (2,-4.6) node {$\vdots$};

\end{tikzpicture}\hfill
\begin{tikzpicture}[scale=0.6]
\draw[gray] (-1,0) -- (5.5,0) node[below, black] {\small $p$};
\draw[gray] (0,-4.3) -- (0,4.3) node[left, black] {\small $q$};
\draw [font=\small, gray] (-0.3,0) node[below] {$0$};
\draw [font=\small, gray] (4.3,0) node[below] {$n$};

\draw[transparent] (2,5) node {$\vdots$};

\foreach \x in {0,4}
	\draw[thick] (\x,-4.3) -- (\x,4.3);
\foreach \y in {-4,...,0}
	\draw[thick] (0,\y) -- (4,\y+4);

\draw[thick] (0,1) -- (3.3,4.3);
\draw[thick] (0,2) -- (2.3,4.3);
\draw[thick] (0,3) -- (1.3,4.3);
\draw[thick] (0,4) -- (0.3,4.3);

\draw[thick] (0.7,-4.3) -- (4,-1);
\draw[thick] (1.7,-4.3) -- (4,-2);
\draw[thick] (2.7,-4.3) -- (4,-3);
\draw[thick] (3.7,-4.3) -- (4,-4);
\draw[transparent] (2,-4.5) node {$\vdots$};

\draw[transparent] (7,0) node {An};
\end{tikzpicture}
\end{center}
\caption{$\A_n = H^{*,*}(S^n_a;\underline{\F_2})$.}
\label{An}
\end{figure}


\section{Computational tools}\label{Computational tools}

In this section we present some common tools for computing $RO(C_2)$-graded cohomology of $C_2$-spaces.  We then apply them to compute $\A_n$, the cohomology of the antipodal sphere.  If $X$ is a $C_2$-CW complex then $X$ has a filtration coming from the cell structure.  The filtration quotients $X_n/X_{n-1}$ are wedges of copies of ${C_2}_+ \Smash S^n$ and $S^{n,0}$ corresponding to the orbit cells that were attached.

More generally, suppose we are given any filtration of a pointed $C_2$-space $X$
\[
pt \subseteq X_0 \subseteq X_1 \subseteq \cdots \subseteq X_k \subseteq X_{k+1} \subseteq \cdots \subseteq X.
\]
Corresponding to the cofiber sequence
\[
X_{k} \hookrightarrow X_{k+1} \to X_{k+1}/X_{k},
\]
for each weight $q$ there is a long exact sequence\footnote{As Kronholm discusses in \cite{K}, these long exact sequences sew together in the usual way to give a spectral sequence for each weight $q$.}
\[
\cdots \to \tilde{H}^{p,q}(X_{k+1}/X_{k}) \to \tilde{H}^{p,q}(X_{k+1}) \to \tilde{H}^{p,q}(X_{k}) \xrightarrow{d} \tilde{H}^{p+1,q}(X_{k+1}/X_{k})\to \cdots.
\]
Taken collectively for all $p$ and $q$, each map in the long exact sequence is a graded $\Mt$-module map.  By abuse, we often refer to the long exact sequences taken collectively for all $q$ as ``the long exact sequence."  Then $d$ is a graded $\Mt$-module map $d: \tilde{H}^{*,*}(X_{k}) \to \tilde{H}^{*+1,*}(X_{k+1}/X_{k})$, which we call ``the differential'' in the long exact sequence.  From this long exact sequence, there is a short exact sequence of graded $\Mt$-modules
\[
0 \to \cok d \to \tilde{H}^{*,*}(X_{k+1}) \to \ker d \to 0.
\]
In many cases the modules $\cok{d}$ and $\ker{d}$ are relatively easily determined, and computing $\tilde{H}^{*,*}(X_{k+1})$ requires solving the extension problem presented in this short exact sequence.

For convenience, we often carry out these computations by plotting the bigraded cohomology in the plane.  Recall that we plot the topological dimension $p$ along the horizontal axis and the weight $q$ along the vertical axis as in the depictions of $\Mt$ and $\A_n$.  With this convention, the differential $d$ in the long exact sequence is depicted by a horizontal arrow since it increases topological dimension by one.  When $\tilde{H}^{*,*}(X_{k})$ is free as an $\Mt$-module the differential $d$ is determined by its value on any set of $\Mt$-generators.

As a special case of the long exact sequence above, we obtain a long exact sequence that relates the ordinary $RO(C_2)$-graded cohomology of a based $C_2$-space $X$ to the non-equivariant singular cohomology of the underlying space $X$.  Consider the cofiber sequence
\[
{C_2}_+ \to S^{0,0} \to S^{1,1}
\]
from \cite{AM}, where the first map is the quotient and the second map is the inclusion of the fixed-set.
Smashing this cofiber sequence with $X$ gives the cofiber sequence
\[
{C_2}_+ \Smash X \to S^{0,0} \Smash X \to S^{1,1} \Smash X
\]
which induces a long exact sequence in cohomology for each weight $q$.  Using this long exact sequence along with the suspension isomorphism and an adjunction\footnote{The suspension isomorphism for $RO(G)$-graded cohomology means that $H^V_G(X;M) \cong \tilde{H}^{V+W}_G(X_+ \Smash S^W;M)$.  The adjunction isomorphism implies $[G_+ \Smash X, Y]_G \cong [X,U(Y)]_e$ for any finite group $G$, where on the left we have homotopy classes of pointed $G$-equivariant maps and on the right homotopy classes of pointed non-equivariant maps between the underlying spaces. See \cite{M}.} we obtain the following lemma as in \cite{K}, originally due to Araki-Murayama \cite{AM}.

\begin{lemma} \label{forgetful les}
(Forgetful long exact sequence).  Let $X$ be a pointed $C_2$-space.  Then for every $q$ there is a long exact sequence
\[
\cdots \to \tilde{H}^{p,q}(X) \xrightarrow{\cdot \rho} \tilde{H}^{p+1,q+1}(X) \xrightarrow{\psi} \tilde{H}^{p+1}_{sing}(X) \to \tilde{H}^{p+1,q}(X) \to \cdots
\]
where $\cdot \rho$ is multiplication by $\rho \in \Mt$ and $\psi: \tilde{H}^{p,q}(X) \to \tilde{H}^{p}_{sing}(X)$ is the forgetful map
to the singular cohomology of the underlying space with $\F_2$ coefficients.
\end{lemma}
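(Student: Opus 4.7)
The plan is to apply reduced $RO(C_2)$-graded cohomology to the cofiber sequence ${C_2}_+ \Smash X \to S^{0,0} \Smash X \to S^{1,1} \Smash X$ obtained by smashing the Araki--Murayama cofiber sequence ${C_2}_+ \to S^{0,0} \to S^{1,1}$ with $X$, and then to reinterpret each term using the suspension isomorphism and the adjunction mentioned in the footnote.

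Applying $\tilde{H}^{*,*}$ to this cofiber sequence produces the usual long exact sequence at each weight $q$. Reindexing so that the middle term is $\tilde{H}^{p+1,q+1}(S^{0,0} \Smash X) = \tilde{H}^{p+1,q+1}(X)$, the sequence takes the form
\[
\cdots \to \tilde{H}^{p+1,q+1}(S^{1,1} \Smash X) \to \tilde{H}^{p+1,q+1}(X) \to \tilde{H}^{p+1,q+1}({C_2}_+ \Smash X) \to \tilde{H}^{p+2,q+1}(S^{1,1} \Smash X) \to \cdots.
\]
The outer terms involving $S^{1,1} \Smash X$ simplify via the suspension isomorphism to $\tilde{H}^{p,q}(X)$ and $\tilde{H}^{p+1,q}(X)$ respectively. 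For the term involving ${C_2}_+ \Smash X$, representing cohomology by the spectrum $H\underline{\F_2}$ and using the adjunction gives
\[
\tilde{H}^{p+1,q+1}({C_2}_+ \Smash X) \iso [X, U(\Sigma^{p+1,q+1}H\underline{\F_2})]_e \iso \tilde{H}^{p+1}_{sing}(X),
\]
since the underlying non-equivariant spectrum of $\Sigma^{p+1,q+1}H\underline{\F_2}$ is $\Sigma^{p+1}H\F_2$. Under this identification, the map $\tilde{H}^{p+1,q+1}(X) \to \tilde{H}^{p+1}_{sing}(X)$ induced by the quotient ${C_2}_+ \Smash X \to S^{0,0} \Smash X$ is the forgetful map $\psi$ essentially by construction: forgetting the $C_2$-action is what this adjunction encodes.

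The main technical point---and really the only step that is not a formal consequence of the three ingredients above---is to show that the map $\tilde{H}^{p,q}(X) \to \tilde{H}^{p+1,q+1}(X)$ induced by the fixed-set inclusion $S^{0,0} \to S^{1,1}$, after the suspension isomorphism, is multiplication by $\rho$. The idea is to observe that the stable map $S^{0,0} \to S^{1,1}$ represents the class $\rho \in \tilde{H}^{1,1}(S^{0,0}) = H^{1,1}(pt)$, by definition of $\rho$. Then by naturality of the $\Mt$-module structure on $\tilde{H}^{*,*}(X)$---which comes from smashing with $H\underline{\F_2}$ and using the ring spectrum structure of $H\underline{\F_2}$---smashing the identity of $X$ with this stable map and composing with the multiplication produces precisely multiplication by $\rho$. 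Verifying this identification is the heart of the argument; once it is in place, the long exact sequence assumes the stated form.
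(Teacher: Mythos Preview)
Your proposal is correct and follows essentially the same approach as the paper: smash the Araki--Murayama cofiber sequence ${C_2}_+ \to S^{0,0} \to S^{1,1}$ with $X$, take the associated long exact sequence, and identify the terms via the suspension isomorphism and the adjunction $[{C_2}_+ \Smash X, Y]_{C_2} \cong [X, U(Y)]_e$. The paper does not spell out the identification of the first map with multiplication by $\rho$ as carefully as you do, so your write-up is, if anything, slightly more detailed on that point.
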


The forgetful map preserves topological degree because the underlying spectrum of $H\underline{\F_2}$ is $H\F_2$.  It was shown to be a ring map in \cite{AM} and $\psi: H^{*,*}(pt) \to H^{*}_{sing}(pt)$ sends $\rho \mapsto 0$ and $\tau \mapsto 1$.


\subsection{Computing the cohomology of the antipodal sphere}\label{An proof}

With these tools in place we may now compute the cohomology of $S^n_a$, the $n$-dimensional sphere with the antipodal $C_2$-action.  Using the notation $\A_n = H^{*,*}(S^n_a;\underline{\F_2})$, we will show by induction on $n$ that $\A_n \cong \F_2[\tau,\tau^{-1},\rho]/(\rho^{n+1})$.  For the base case $n=0$ the cohomology of $C_2 = S^0_a$ is part of the computation of $\Mt$ and it is easily seen that
$\A_0 \cong \F_2[\tau,\tau^{-1}]$ (see \cite{C} and \cite{F}).  Now we assume
$\A_{n-1} \cong \F_2[\tau,\tau^{-1},\rho]/(\rho^n)$ and compute $H^{*,*}(S^{n}_a)$ via the cofiber sequence ${S^{n-1}_a}_+ \to {S^n_a}_+ \to {C_2}_+ \Smash S^{n,0}$ where the first map is the inclusion of the equator.  The associated long exact sequence in reduced cohomology is depicted on the left in Figure \ref{An2}.  The only possible differential is zero and we are left with the usual extension problem
\[
0 \to \cok{d} \to H^{*,*}(S^n_a) \to \ker{d} \to 0.
\]
Using the forgetful long exact sequence in Lemma \ref{forgetful les}, we see there is a hidden $\rho$-extension as depicted on the right side of Figure \ref{An2}.  The commutativity of $\rho$ and $\tau$ completes the computation.

\begin{figure}[h]
\begin{center}\hfill
\begin{tikzpicture}[scale=0.6]
\draw[gray] (-1,0) -- (6,0) node[below, black] {\small $p$};
\draw[gray] (0,-4.3) -- (0,4.3) node[left, black] {\small $q$};
\draw [font=\small, gray] (-0.3,0) node[below] {$0$};
\draw [font=\small, gray] (5.3,0) node[below] {$n$};

\draw[->,thick, black] (4,0) -- (5,0);
\draw (4.5,0) node[above] {\small $d$};

\foreach \x in {0,4}
	\draw[thick,red] (\x,-4.3) -- (\x,4.3);
\foreach \y in {-4,...,0}
	\draw[thick,red] (0,\y) -- (4,\y+4);

\draw[thick,red] (0,1) -- (3.3,4.3);
\draw[thick,red] (0,2) -- (2.3,4.3);
\draw[thick,red] (0,3) -- (1.3,4.3);
\draw[thick,red] (0,4) -- (0.3,4.3);

\draw[thick,red] (0.7,-4.3) -- (4,-1);
\draw[thick,red] (1.7,-4.3) -- (4,-2);
\draw[thick,red] (2.7,-4.3) -- (4,-3);
\draw[thick,red] (3.7,-4.3) -- (4,-4);

\draw[thick,blue] (5,-4.3) -- (5,4.3);

\end{tikzpicture}\hfill
\begin{tikzpicture}[scale=0.6]
\draw[gray] (-1,0) -- (6,0) node[below, black] {\small $p$};
\draw[gray] (0,-4.3) -- (0,4.3) node[left, black] {\small $q$};
\draw [font=\small, gray] (-0.3,0) node[below] {$0$};
\draw [font=\small, gray] (5.3,0) node[below] {$n$};

\foreach \x in {0,4}
	\draw[thick] (\x,-4.3) -- (\x,4.3);
\foreach \y in {-4,...,0}
	\draw[thick] (0,\y) -- (4,\y+4);

\draw[thick] (0,1) -- (3.3,4.3);
\draw[thick] (0,2) -- (2.3,4.3);
\draw[thick] (0,3) -- (1.3,4.3);
\draw[thick] (0,4) -- (0.3,4.3);

\draw[thick] (0.7,-4.3) -- (4,-1);
\draw[thick] (1.7,-4.3) -- (4,-2);
\draw[thick] (2.7,-4.3) -- (4,-3);
\draw[thick] (3.7,-4.3) -- (4,-4);

\draw[thick] (5,-4.3) -- (5,4.3);
\draw[thick,dashed] (4,0) -- (5,1);
\draw (4.5,1) node {$\cdot\rho$};

\draw[transparent] (7,0) node {An};
\end{tikzpicture}
\end{center}
\caption{Computing $\A_n = H^{*,*}(S^n_a;\underline{\F_2})$.}
\label{An2}
\end{figure}


\section{Graded modules and cohomology}\label{Graded modules and cohomology}

We now introduce a number of technical lemmas that will aid in the proof of Theorem \ref{main thm}, our main theorem.  First we show that $\theta$ detects copies of $\Mt$ in the sense that an element in any $\Mt$-module with a nonzero $\theta$-multiple generates a free submodule.  Morally this is because there is only one element in $\Mt$ with a nonzero $\theta$-multiple, the generator $1$ of the ring.

\begin{lemma}\label{theta lemma}
Let $N$ be a graded $\Mt$-module containing a nonzero homogeneous element $x$.  If $\theta x$ is nonzero, then $\Mtb{x}$ is a graded free submodule of $N$.
\end{lemma}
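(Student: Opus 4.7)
The plan is to establish that the annihilator $\mathrm{Ann}(x) = \{a \in \Mt : ax = 0\}$ vanishes; equivalently, that the evaluation map $\Mt \to N$ sending $a \mapsto ax$ is injective, which identifies $\Mtb{x}$ with a shifted copy of $\Mt$.

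First, I would observe that $\Mt$ is at most one-dimensional over $\F_2$ in each bidegree where it is nonzero, with a canonical basis given by the monomials $\rho^i\tau^j \in \Mt^+$ at bidegrees $(i,\,i+j)$ together with the elements $\theta/(\rho^i\tau^j) \in \Mt^-$ at bidegrees $(-i,\,-i-j-2)$. Since evaluation at $x$ is bigraded up to the shift by $|x|$, its kernel $\mathrm{Ann}(x)$ is a bigraded submodule of $\Mt$, so any nonzero bidegree-component of $\mathrm{Ann}(x)$ is spanned by a single basis element. It therefore suffices to show that no basis element of $\Mt$ annihilates $x$.

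The central observation is a kind of duality on the basis elements: the involution $(p,q)\mapsto (-p,-q-2)$ exchanges the bidegrees of $\Mt^+$ with those of $\Mt^-$ bijectively, and the paired basis elements multiply to $\theta$. Concretely, $\rho^i\tau^j$ pairs with $\theta/(\rho^i\tau^j)$, and the product is $\theta$ by construction. Hence for every basis element $g$ of $\Mt$ there is a partner $g'$ with $gg' = \theta$.

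With the pairing in hand, the rest is immediate: if some basis element $g$ satisfied $gx = 0$, then $\theta x = (gg')x = g'(gx) = 0$, contradicting $\theta x \neq 0$. I do not anticipate a real obstacle; the only thing to check carefully is that the involution really does pair up all nonzero bidegrees of $\Mt$ and that each pair does multiply to $\theta$ (rather than $0$), which is a direct verification from the explicit description of $\Mt^\pm$ in Figure \ref{point}.
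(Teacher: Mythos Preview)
Your proposal is correct and is essentially the same argument as the paper's: both factor $\theta$ as $\rho^m\tau^n \cdot \frac{\theta}{\rho^m\tau^n}$ and use $\theta x \neq 0$ to conclude that neither factor annihilates $x$. Your phrasing in terms of the annihilator and the involution $(p,q)\mapsto(-p,-q-2)$ is a slightly more abstract packaging of the same factorization the paper writes out explicitly.
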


\begin{proof}
We will show $\Mtb{x} \subseteq N$ by showing all $\Mt$-multiples of $x$ are nonzero, i.e.\ that $\rho^{m}\tau^{n}x$ and $\frac{\theta}{\rho^m\tau^n}x$ are nonzero for all $m, n \geq 0$.  Since $\theta x$ is nonzero and elements of $\Mt$ commute we have
\[
0 \neq \theta x = \frac{\theta}{\rho^m\tau^n}\cdot \rho^{m}\tau^{n}x = \rho^{m}\tau^{n} \cdot \frac{\theta}{\rho^m\tau^n}x.
\]
This implies $\rho^{m}\tau^{n}x$ and $\frac{\theta}{\rho^m\tau^n}x$ cannot be zero for any nonnegative choice of $m$ or $n$.  So the submodule generated by $x$ is free. \end{proof}

We will also show that $\Mt$ is self-injective, meaning the regular module is injective.  The proof is somewhat tedious and not particularly enlightening.  
The details can be found in Appendix \ref{Injectivity}.

\begin{prop}\label{injectivity prop}
The regular module $\Mt$ is injective as a graded $\Mt$-module.
\end{prop}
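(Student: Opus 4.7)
The plan is to invoke the graded analogue of Baer's criterion: it suffices to show that every graded $\Mt$-module homomorphism $f \colon I \to \Mt$ from a graded ideal $I \subseteq \Mt$ extends to all of $\Mt$; equivalently, that $f$ is given by multiplication by some element $y \in \Mt$.

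The first step would be to understand the graded ideals of $\Mt$. Since $\Mt$ is one-dimensional over $\F_2$ in each bidegree where it is nonzero, a graded ideal is determined by its bidegree support, which must be closed under the $\Mt$-action. Every such ideal decomposes as $I = I^+ \oplus I^-$ with $I^{\pm} := I \cap \Mt^{\pm}$, and the two pieces can combine in many ways because $\Mt^+$ and $\Mt^-$ live in disjoint bidegrees.

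Given $f$, I would seek a candidate $y = y^+ + y^-$ by producing $y^\pm \in \Mt^\pm$ separately. The component $y^-$ is governed by the requirement $x y^- = (\text{the $\Mt^-$-part of } f(x))$ for $x \in I^+$, while $x y^- = 0$ automatically for $x \in I^-$ because $\Mt^- \cdot \Mt^- = 0$; such a $y^-$ exists because $\Mt^-$ is, up to a bidegree shift, the graded Matlis dual of $\F_2[\rho,\tau]$ and hence injective as an $\F_2[\rho,\tau]$-module. The component $y^+$ is pinned down by the corresponding equations on the $\Mt^+$-projection of $f|_{I^+}$ together with the equations $x y^+ = f(x)$ for $x \in I^-$ (whose values land in $\Mt^-$); these are handled using freeness of $\Mt^+$ as an $\F_2[\rho,\tau]$-module and the divisibility structure of $\Mt^-$, respectively.

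The main obstacle will be showing that a single $y^+$ meets all of its constraints simultaneously, i.e.\ that the partial data assembled from $I^+$ and $I^-$ really comes from multiplication by one element of $\Mt^+$. At bottom this reflects the fact that $\Mt$ is a trivial extension of $\F_2[\rho,\tau]$ by a shift of its injective hull, a construction classically known to yield a self-injective ring; a direct verification, however, requires working case-by-case across the many possible shapes of $I$ and tracking the cross-terms between $\Mt^+$ and $\Mt^-$, which accounts for the ``tedious'' character the author warns about.
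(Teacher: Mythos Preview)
Your plan via Baer's criterion and the search for a multiplier matches the paper's framework. The paper, however, organizes the case analysis around a sharp dichotomy you do not isolate: if $I$ meets $\Mt^+$ nontrivially, then $I$ automatically contains \emph{all} of $\Mt^-$, because $\rho^m\tau^n \cdot \frac{\theta}{\rho^{m+a}\tau^{n+b}} = \frac{\theta}{\rho^a\tau^b}$. Hence there are only two shapes of ideals, not many: Type~I (finitely generated by monomials in $\Mt^+$, with $I^- = \Mt^-$) and Type~II ($I \subseteq \Mt^-$). With this in hand the paper shows that any nonzero $f$ is determined by its value on a single homogeneous element---using only that each bidegree of $\Mt$ is at most one-dimensional over $\F_2$---and then reads off the multiplier $\lambda$ explicitly in three short sub-cases.

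Your step invoking ``freeness of $\Mt^+$'' to produce $y^+$ from the $\Mt^+$-projection of $f|_{I^+}$ is where the sketch goes astray: $\F_2[\rho,\tau]$ is not graded self-injective, so a graded map $I^+ \to \Mt^+$ need not be multiplication by a ring element (consider $(\rho) \to \F_2[\rho,\tau]$, $\rho \mapsto 1$, with the appropriate shift). What actually pins down $y^+$ is precisely the presence of $I^- = \Mt^-$ whenever $I^+ \neq 0$: the relation $\frac{\theta}{\rho^a\tau^b} \cdot f(\rho^m\tau^n) = f\bigl(\frac{\theta}{\rho^a\tau^b} \cdot \rho^m\tau^n\bigr)$ is what forces $f(\rho^m\tau^n) \in \rho^m\tau^n \cdot \Mt$. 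Your trivial-extension remark ($\Mt \cong R \ltimes E$ with $R=\F_2[\rho,\tau]$ and $E$ a shifted graded injective hull of $\F_2$) is a correct conceptual explanation and could be turned into an alternative proof, but making it rigorous still comes down to this same interaction between the two cones rather than to freeness.
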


So far, the results of this section have been purely algebraic and hold for graded $\Mt$-modules in general.  The next result is specific to the cohomology of a finite $C_2$-CW complex as an $\Mt$-module.  We have already observed a relationship between multiplication by $\rho$ and singular cohomology of the underlying space via the forgetful long exact sequence.  We now show that localization by $\rho$ relates the equivariant cohomology of a space to the singular cohomology of its fixed set.   In the proof of our main theorem, we will see this restricts the types of $\Mt$-modules that can arise as the cohomology of a space.

\begin{lemma}\label{rho localization}
($\rho$-localization) Let $X$ be a finite\footnote{Note that finite-dimensionality is required.  A counterexample that is locally finite but not finite-dimensional is the infinite-dimensional sphere with the antipodal action $S^{\infty}_a$.  It has empty fixed-set but $\A_\infty = H^{*,*}(S^{\infty}_a) \cong \F_2[\tau,\tau^{-1},\rho]$ and $\rho^{-1}\A_\infty$ is nontrivial.} $C_2$-CW complex.  Then
\[
\rho^{-1}H^{*,*}(X) \cong \rho^{-1}H^{*,*}(X^{C_2}) \cong H^{*}_{sing}(X^{C_2}) \otimes_{\F_2} \rho^{-1}\Mt.
\]
\end{lemma}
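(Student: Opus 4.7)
The plan is to prove the two isomorphisms separately: the localization theorem $\rho^{-1}H^{*,*}(X) \cong \rho^{-1}H^{*,*}(X^{C_2})$, reducing to the fixed set, and the identification $\rho^{-1}H^{*,*}(X^{C_2}) \cong H^*_{sing}(X^{C_2}) \otimes_{\F_2} \rho^{-1}\Mt$ for a trivial-action space.

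For the first isomorphism, I would consider the cofiber sequence $X^{C_2}_+ \to X_+ \to X/X^{C_2}$ of based $C_2$-spaces, where $X^{C_2}$ (together with the disjoint basepoint) collapses to a single basepoint. The quotient $X/X^{C_2}$ inherits a based $C_2$-CW structure built entirely from free orbit cells of the form ${C_2}_+ \Smash D^n$. Because $\rho$-inversion is an exact functor on $\Mt$-modules, the associated cohomology long exact sequence remains exact after localization, so it suffices to prove $\rho^{-1}\tilde{H}^{*,*}(X/X^{C_2}) = 0$. I would proceed by induction on the (finite) number of cells of $X/X^{C_2}$: at each stage one attaches a free cell, and the filtration quotient $\simeq {C_2}_+ \Smash S^n$ has cohomology $\Sigma^{n,0}\A_0 \cong \Sigma^{n,0}\F_2[\tau,\tau^{-1}]$, which is annihilated by $\rho$. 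Applying $\rho^{-1}$ to the cofiber long exact sequence sandwiches the cohomology of the new skeleton between two $\rho$-null modules, so the result remains $\rho$-null. The same induction principle also handles the isomorphism $\rho^{-1}H^{*,*}(X) \cong \rho^{-1}H^{*,*}(X^{C_2})$ by noticing that, once both endpoints of the long exact sequence for the original cofiber sequence are localized, the middle term is forced to match the fixed-set cohomology.

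For the second isomorphism, I would observe that when $Y$ has trivial $C_2$-action, every cell is a fixed equivariant cell of type $(C_2/C_2) \times D^n$, contributing a single copy of $\Mt$ in topological degree $n$ to the Bredon cellular cochain complex. The equivariant coboundary maps are $\Mt$-linear and agree on generators with the ordinary cellular coboundaries over $\F_2$, so the Bredon complex decomposes as $C^*_{sing}(Y;\F_2) \otimes_{\F_2} \Mt$. Since $\Mt$ is free as a graded $\F_2$-module, taking cohomology yields $H^{*,*}(Y) \cong H^*_{sing}(Y) \otimes_{\F_2} \Mt$ as $\Mt$-modules. Inverting $\rho$ and applying this to $Y = X^{C_2}$ (finite because $X$ is) completes the proof.

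The main obstacle is the bookkeeping in the first step: one must verify that $X/X^{C_2}$ genuinely admits a $C_2$-CW structure consisting only of free orbit cells, even though free cells in $X$ may have attaching maps passing through $X^{C_2}$. After quotienting, these attaching maps descend to maps into the collapsed previous skeleton, and the induction terminates because $X$ has only finitely many cells. The footnote's counterexample $S^{\infty}_a$ makes clear why finite-dimensionality is essential: without a bound on the number of cells, the inductive vanishing argument cannot terminate, and indeed $\rho^{-1}\A_\infty \neq 0$ even though $(S^{\infty}_a)^{C_2} = \emptyset$.
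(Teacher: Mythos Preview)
Your argument is correct and close in spirit to the paper's, but the packaging differs. For the first isomorphism the paper does not single out the cofiber sequence $X^{C_2}_+ \to X_+ \to X/X^{C_2}$; instead it observes that both $\rho^{-1}H^{*,*}(-)$ and $\rho^{-1}H^{*,*}\bigl((-)^{C_2}\bigr)$ are cohomology theories on finite $C_2$-CW complexes (exactness of localization, and preservation of Puppe sequences by the fixed-point functor), and then checks that $\rho^{-1}i^*$ is an isomorphism on the two orbits $C_2/C_2$ and $C_2/e$. Your induction on free cells of $X/X^{C_2}$ is essentially the hands-on unwinding of that same comparison-on-orbits: the key input in both cases is $\rho^{-1}\A_0 = 0$. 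The paper's phrasing buys naturality for free and avoids the bookkeeping about the CW structure on $X/X^{C_2}$ that you flagged as an obstacle; your phrasing makes the role of finite-dimensionality (termination of the induction) more visible. For the second isomorphism the paper says only that $X^{C_2}$ has a filtration by trivial cells; your explicit identification of the cellular Bredon cochain complex with $C^*_{sing}(X^{C_2};\F_2)\otimes_{\F_2}\Mt$ is exactly what that sentence means once unpacked, and the passage to cohomology is justified since $\Mt$ is free over $\F_2$.
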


\begin{proof}
The inclusion $X^{C_2} \xrightarrow{i} X$ induces $\rho^{-1}H^{*,*}(X) \xrightarrow{\rho^{-1}i^*} \rho^{-1}H^{*,*}(X^{C_2})$.  For locally finite, finite-dimensional $C_2$-CW complexes, $\rho^{-1}H^{*,*}(-)$ is a cohomology theory because localization is exact.  On the other hand, $H^{*,*}((-)^{C_2})$ is a cohomology theory because the fixed-set functor $(-)^{C_2}$ preserves Puppe sequences.  So $\rho^{-1}H^{*,*}((-)^{C_2})$
is also a cohomology theory.  It is easily verified that $\rho^{-1}H^{*,*}(-)$ and $\rho^{-1}H^{*,*}((-)^{C_2})$ agree on both orbits, $C_2/C_2 = pt$ and $C_2/e = C_2$, and hence are naturally isomorphic cohomology theories via $\rho^{-1}i^*$.  This proves the first isomorphism above.  The second isomorphism, which relates $\rho$-localization to singular cohomology, follows from the fact that $X^{C_2}$ has trivial action and so has a cellular filtration involving only trivial cells.
\end{proof}

\begin{remark}
An important consequence is that if $X$ is a finite $C_2$-CW complex, then $\rho^{-1}H^{*,*}(X)$ does not have any $\tau$-torsion since $\rho^{-1} \Mt \cong \F_2[\tau,\rho,\rho^{-1}]$ and $\rho^{-1}H^{*,*}(X)$ is free over $\rho^{-1} \Mt$.
\end{remark}

Just as in classical topology, the pairings on $H^{*,*}(-)$ give rise to higher products given by Toda brackets as first defined in \cite{Toda}.  The next proposition involves a higher order decomposition of 1 in the ring $\Mt$.  This result will also restrict the types of $\Mt$-modules that can arise as the cohomology of a space.

\begin{prop}
In $\Mt$, we have the following Toda bracket
\[
\langle \tau, \theta, \rho \rangle = 1
\]
with zero indeterminacy.
\end{prop}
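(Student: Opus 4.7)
First I verify the bracket is well-defined. By inspection of Figure \ref{point}, both $H^{0,-1}(pt)$ and $H^{1,-1}(pt)$ vanish (these bidegrees lie in the gap between the upper and lower cones of $\Mt$), hence $\tau\theta=0$ and $\theta\rho=0$. Thus the bracket is defined and lives in bidegree
\[
|\tau|+|\theta|+|\rho|-(1,0)=(0,1)+(0,-2)+(1,1)-(1,0)=(0,0),
\]
i.e.\ in $H^{0,0}(pt)=\F_2\{1\}$. The indeterminacy is $\tau\cdot H^{0,-1}(pt)+H^{-1,-1}(pt)\cdot\rho$, which vanishes for the same reason (the bidegree $(-1,-1)$ also falls outside both cones of $\Mt$). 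So $\langle\tau,\theta,\rho\rangle$ equals $0$ or $1$ with no ambiguity, and the substantive task is to show the bracket is nonzero.

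My plan for the nonvanishing is to realize the bracket geometrically using the Araki--Murayama cofiber sequence ${C_2}_+\to S^{0,0}\xrightarrow{\rho}S^{1,1}$, which identifies $\rho$ as the inclusion of fixed points. Smashing with $H\underline{\F_2}$ produces a cofiber sequence of $H\underline{\F_2}$-modules whose connecting map is multiplication by $\rho$. Because $\theta\rho=0$, the element $\theta$ lifts over this cofiber to an $H\underline{\F_2}$-module map $\widetilde{\theta}$. Assembling this lift with $\tau$ and the connecting map of the cofiber sequence in the standard way produces a composite $S^{0,0}\to H\underline{\F_2}$ representing the Toda bracket as a specific element of $H^{0,0}(pt)=\F_2$.

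To see this composite is nonzero, I would use the identification $H^{*,*}({C_2}_+)\cong\A_0=\F_2[\tau,\tau^{-1}]$ from Section \ref{An proof}, together with the forgetful long exact sequence of Lemma \ref{forgetful les}. The lift $\widetilde{\theta}$ corresponds to a specific $\tau$-divisible class in $H^{*,*}({C_2}_+)$, and tracking it through the connecting map and $\tau$-multiplication should recover the generator $1\in H^{0,0}(pt)$. Since the forgetful map $\psi$ sends $\tau\mapsto 1$ and the fixed-set cofiber sequence connects the equivariant and singular theories in a precise way, this identification can be read off directly from the structure of $\A_0$.

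The main obstacle is the careful construction of the lift $\widetilde{\theta}$ and the correct identification of the Toda bracket composite in this cofiber sequence, together with the bookkeeping of bidegree shifts in the $H\underline{\F_2}$-module cofiber. Once these are set up correctly, the actual verification that the composite is nontrivial reduces to recognizing a specific class in $\A_0$ and its image under the connecting homomorphism. Since the indeterminacy is zero, any single computation exhibiting a nonzero value suffices to pin down the bracket as $1$, so the difficulty is primarily technical rather than conceptual.
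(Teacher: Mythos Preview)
Your outline is correct and essentially matches the paper's second, more algebraic argument: the paper computes the bracket via the cofiber of $\rho$ (equivalently, the Puppe sequence of ${C_2}_+\to S^{0,0}\to S^{1,1}$), observes that $\tilde H^{*,*}({C_2}_+\Smash S^2)\cong\Sigma^{2,0}\A_0$ has $\tau$ acting invertibly, and reads the bracket off as the hidden $\tau$-extension forced by this. Your plan to lift $\theta$ over the cofiber and then use the $\tau$-invertibility of $\A_0$ is exactly this hidden-extension argument.

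It is worth noting that the paper's \emph{primary} proof takes a more geometric route: it factors $\rho$ and $\theta$ through actual spheres using Dold--Thom models ($\tilde\rho$ is the fixed-point inclusion $S^{0,0}\hookrightarrow S^{1,1}$, and $\tilde\theta$ is the quotient map $S^{2,2}\to S^{2,2}/C_2\cong S^{2,0}$), identifies the extension $f:{C_2}_+\Smash S^2\to S^{2,0}$ as the fold map, and then checks directly via the adjunction $[{C_2}_+\Smash S^2,\F_2\langle S^{2,1}\rangle]_{C_2}\cong[S^2,\F_2\langle S^2\rangle]_e$ that $\tau\circ f$ corresponds to $\psi(\tau)=1\neq 0$. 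Your approach avoids these explicit models at the cost of a slightly less concrete computation; the paper's geometric version makes the nontriviality visible without any bookkeeping in $\A_0$, but both arguments rest on the same cofiber sequence and the same fact that $\psi(\tau)=1$ (equivalently, $\tau$ acts invertibly on $H^{*,*}({C_2}_+)$).
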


\begin{proof}
First notice that the Toda bracket $\langle \tau, \theta, \rho \rangle$ is well defined since $\theta\rho = 0 = \tau\theta$ in $\Mt$ for degree reasons.  Also notice there is zero indeterminacy because the indeterminacy of the bracket is given by the double coset $\tau H^{0,-1}(pt) + H^{-1,-1}(pt)\rho \equiv 0$.  So $\langle \tau, \theta, \rho \rangle$ is a set containing a single element of $H^{0,0}(pt) \cong \F_2$.  In order to compute $\langle \tau, \theta, \rho \rangle$
we need to determine whether this element is trivial or not.

We will use geometric models for the elements $\tau$, $\theta$, and $\rho$ to prove this Toda bracket is nontrivial.  From \cite{dS}, a model for the $(p,q)$-th Eilenberg--MacLane space representing $\tilde{H}^{p,q}(-; \underline{\F_2})$ is $K(\underline{\F_2}(p,q)) \simeq \F_2 \langle S^{p,q} \rangle$.  This is the usual Dold--Thom model given by configurations of points on $S^{p,q}$ with labels in $\F_2$.  The action on the configurations is inherited from $S^{p,q}$.

We can consider $\rho$ geometrically via
\[
\rho \in H^{1,1}(pt) \cong [S^{0,0},K(\underline{\F_2}(1,1))]_{C_2} \cong [S^{0,0},\F_2\langle S^{1,1} \rangle]_{C_2}.
\]
Or equivalently, using the loop-suspension adjunction
\begin{align*}
\rho \in H^{1,1}(pt) &\cong [S^{0,0},K(\underline{\F_2}(1,1))]_{C_2}\\
&\cong [S^{0,0},\Omega^{1,1}K(\underline{\F_2}(2,2))]_{C_2}\\
&\cong [S^{1,1},K(\underline{\F_2}(2,2))]_{C_2}\\
&\cong [S^{1,1},\F_2\langle S^{2,2} \rangle]_{C_2}.
\end{align*}
On the other hand, we can consider $\theta$ geometrically via
\begin{align*}
\theta \in H^{0,-2}(pt) &\cong [S^{0,0},K(\underline{\F_2}(0,-2))]_{C_2}\\
&\cong [S^{0,0}, \Omega^{2,2}K(\underline{\F_2}(2,0))]_{C_2}\\
&\cong [S^{2,2},K(\underline{\F_2}(2,0))]_{C_2}\\
&\cong [S^{2,2},\F_2 \langle S^{2,0} \rangle]_{C_2}.
\end{align*}
Both $\rho$ and $\theta$ are in the image of the Hurewicz map and factor as
\begin{center}
\begin{tikzcd}
S^{0,0} \arrow[rr, "\rho"] \arrow[swap, dr, "\tilde{\rho}"] & & \F_2 \langle S^{1,1} \rangle \\
 & S^{1,1} \arrow[swap, ur, "\iota"] &
\end{tikzcd}\hfill
\begin{tikzcd}
S^{2,2} \arrow[rr, "\theta"] \arrow[swap, dr, "\tilde{\theta}"] & & \F_2 \langle S^{2,0} \rangle \\
 & S^{2,0} \arrow[swap, ur, "\iota"] &
\end{tikzcd}
\end{center}
where $\iota$ is the canonical map sending each point $x$ to the configuration $[x]$.  Here $\tilde{\rho}$ includes $S^{0,0}$ as the fixed-set of $S^{1,1}$.  This is because $\rho$ is the unique nontrivial element in $H^{1,1}(pt)$ and the composition $\iota \circ \tilde{\rho}$ is not null when restricted to the fixed-sets.  In the following proof we will actually use $\Sigma^{1,1} \tilde{\rho}: S^{1,1} \to S^{2,2}$, the inclusion of a meridian, which factors $\rho$ viewed as a map $\rho : S^{1,1} \to \F_2\langle S^{2,2} \rangle$.
Again $\iota \circ \Sigma^{1,1} \tilde{\rho}$ is not null when restricted to fixed-sets.  Since the target of $\theta$ is fixed by the $C_2$-action, $\theta$ factors through the quotient.  The quotient map $ \tilde{\theta}: S^{2,2} \to S^{2,2}/C_2$ is a degree $2$ map on the underlying sphere $S^2$.

For $\tau$ we observe
\begin{align*}
\tau \in H^{0,1}(pt) &\cong [S^{0,0},K(\underline{\F_2}(0,1))]_{C_2}\\
&\cong [S^{0,0}, \Omega^{1,0}K(\underline{\F_2}(1,1))]_{C_2}\\
&\cong [S^{1,0},K(\underline{\F_2}(1,1))]_{C_2}\\
&\cong [S^{1,0},\F_2 \langle S^{1,1} \rangle]_{C_2}\\
& \cong [S^{2,0},\F_2 \langle S^{2,1} \rangle]_{C_2}.
\end{align*}
We do not actually need a geometric model for $\tau$ to prove the Toda bracket is nontrivial.  We will need the fact that the forgetful map $\psi : H^{*,*}(pt) \to H^*_{sing}(pt)$ sends $\tau \mapsto 1$ in the forgetful long exact sequence from Lemma \ref{forgetful les}.

We are now ready to compute the Toda bracket via the composition
\begin{center}
\begin{tikzcd}
S^{1,1} \arrow[r,"\Sigma^{1,1}\tilde{\rho}"] & S^{2,2} \arrow[r,"\tilde{\theta}"] & S^{2,0} \arrow[r,"\tau"] & \F_2 \langle S^{2,1} \rangle.
\end{tikzcd}
\end{center}
Using the Puppe sequence
\[
S^{1,1} \to S^{2,2} \to {C_2}_+ \Smash S^2 \to S^{2,1} \to \cdots
\]
we can choose maps $f$ and $g$ so the following diagram
\begin{center}
\begin{tikzcd}
S^{1,1} \arrow[r,"\Sigma^{1,1}\tilde{\rho}"] & S^{2,2} \arrow[d] \arrow[r,"\tilde{\theta}"] & S^{2,0} \arrow[r,"\tau"] & \F_2 \langle S^{2,1} \rangle \\
	& {C_2}_+ \Smash S^2 \arrow[d] \arrow[ur, dashed, swap,"f"] & & \\
	& S^{2,1} \arrow[uurr, dashed, swap,"g"] & &
\end{tikzcd}
\end{center}
commutes up to homotopy.  There is no indeterminacy so $\langle \tau, \theta, \rho \rangle = g$.  It remains to show $g$ is not nullhomotopic.  Notice that we can choose $f$ to be the fold map.  Using the adjunction isomorphism, $\tau \circ f$ is an element of the group
\[
[{C_2}_+ \Smash S^2, \F_2 \langle S^{2,1} \rangle]_{C_2} \cong [S^2, \F_2 \langle S^2 \rangle]_e \cong [S^0, \F_2 \langle S^0 \rangle]_e
\]
corresponding to $\psi(\tau)$, which is not null.
The diagram commutes up to homotopy so $g$ cannot be null on the underlying spaces.  Hence $g \simeq \iota$ and $\langle \tau, \theta, \rho \rangle =1$.
\end{proof}

The key to this proof is to find nice geometric models for $\rho$ and $\theta$, and then to recognize we may choose $f$ to be the fold map.  A more algebraic proof suggested by Dan Isaksen makes use of the relationship between Toda brackets and ``hidden extensions.''  More details about this relationship can be found in Section 3.1.1 of \cite{I}.  In particular, we observe the Toda bracket $\langle \tau, \theta, \rho \rangle = 1$ is equivalent to a hidden $\tau$-extension in the cohomology of the cofiber of $\rho$.  Though it is not required, for the sake of consistency with the previous proof, the following discussion demonstrates this hidden extension argument for the cofiber of $\Sigma^{1,1}\tilde{\rho}$.

In the previous argument we observed that $\tau \circ f$ was not null geometrically to deduce that $g$ was not null.  The key to the more algebraic proof is to consider essentially the same diagram and recognize that $\tau \circ f$ is an element of $\tilde{H}^{2,1}({C_2}_+ \Smash S^2)$.  We can observe this element is nonzero by an easy computation in cohomology.  Then we deduce that $g$ is not null as before.

In the Puppe sequence used to compute the Toda bracket above, we have the cofiber sequence $S^{2,2} \to {C_2}_+ \Smash S^2 \to S^{2,1}$.  Associated to this cofiber sequence there is a long exact sequence in cohomology.  By construction the differential will send the generator of $\Sigma^{2,2}\Mt \cong \tilde{H}^{*,*}(S^{2,2})$ to
$\rho$ times the generator of $\Sigma^{2,1}\Mt \cong \tilde{H}^{*,*}(S^{2,1})$ as depicted in Figure \ref{hidden extension}.

\begin{figure}[h]
\begin{center} \hfill\begin{tikzpicture}[scale=0.6]
\draw[gray] (-3.5,0) -- (4.5,0) node[below, black] {\small $p$};
\draw[gray] (0,-4.5) -- (0,4.5) node[left, black] {\small $q$};
\foreach \x in {-3,...,-1,1,2,...,4}
	\draw [font=\tiny, gray] (\x cm,2pt) -- (\x cm,-2pt) node[anchor=north] {$\x$};
\foreach \y in {-4,...,-1,1,2,...,4}
	\draw [font=\tiny, gray] (2pt,\y cm) -- (-2pt,\y cm) node[anchor=east] {$\y$};

\draw[->,thick, black] (2,2) -- (2.8,2);
\draw (2.6,2) node[above] {\small $d$};

\draw[thick, blue] (1.9,1) -- (4.4,3.5);
\draw[thick, blue] (1.9,1) -- (1.9,4.5);
\draw[thick, blue] (1.9,-1) -- (1.9,-4.5);
\draw[thick, blue] (1.9,-1) -- (-1.6,-4.5);
\fill[blue] (1.9,1) circle(2pt);

\draw[thick, red] (2,2) -- (4.5,4.5);
\draw[thick, red] (2,2) -- (2,4.5);
\draw[thick, red] (2,0) -- (2,-4.5);
\draw[thick, red] (2,0) -- (-2.5,-4.5);
\fill[red] (2,2) circle(2pt);


\end{tikzpicture} \hfill
\begin{tikzpicture}[scale=0.6]
\draw[gray] (-3.5,0) -- (4.5,0) node[below, black] {\small $p$};
\draw[gray] (0,-4.5) -- (0,4.5) node[left, black] {\small $q$};

\foreach \x in {-3,...,-1,1,2,...,4}
	\draw [font=\tiny, gray] (\x cm,2pt) -- (\x cm,-2pt) node[anchor=north] {$\x$};
\foreach \y in {-4,...,-1,1,2,...,4}
	\draw [font=\tiny, gray] (2pt,\y cm) -- (-2pt,\y cm) node[anchor=east] {$\y$};

\draw[dashed, thick] (2,0) arc(-65:65:0.53);
\draw (2.6,0.5) node{\small ${\cdot\tau}$};

\draw[thick, red] (2,0) -- (2,-4.5);
\fill[red] (2,0) circle(2pt) node[below left] {\small $\bar{\theta}$};
\draw[red] (3,-2) node {\small $\ker{d}$};

\draw[thick, blue] (2,1) -- (2,4.5);
\fill[blue] (2,1) circle(2pt);
\draw[blue] (3,3) node {\small $\cok{d}$};
\end{tikzpicture}
\end{center}
\caption{Hidden $\tau$ extension in $\tilde{H}^{*,*}({C_2}_+ \Smash S^2)$.}
\label{hidden extension}
\end{figure}

Usually to compute $\tilde{H}^{*,*}({C_2}_+ \Smash S^2)$ we would need to solve the associated extension problem
\[
0 \to \cok{d} \to \tilde{H}^{*,*}({C_2}_+ \Smash S^2) \to \ker{d} \to 0.
\]
But of course we already know from the suspension isomorphism that
\[
\tilde{H}^{*,*}({C_2}_+ \Smash S^2) \cong \Sigma^{2,0} \A_0.
\]
In particular, multiplication by $\tau$ is an isomorphism here.  The element $\theta \in \ker{d}$ contributes a nonzero element $\bar{\theta} \in \tilde{H}^{*,*}({C_2}_+ \Smash S^2)$ and we see the extension problem must be solved by a hidden $\tau$-extension given by $\tau \bar{\theta} \neq 0$.  If we replace $f$ with $\bar{\theta}$, we see this is equivalent to showing $\tau \circ f$ is not null in the previous proof.
Therefore $g$ cannot be null and again we conclude $\langle \tau, \theta, \rho \rangle = 1$.

Armed with this Toda bracket we obtain a matric Toda bracket.\footnote{Matric Toda brackets are defined similarly to matric Massey products, which were first described in \cite{Matric}.}

\begin{lemma}
In $\Mt$, we have the following matric Toda bracket
\[
\left\langle \begin{bmatrix} \rho & \tau \end{bmatrix}, \begin{bmatrix} \tau \\ \rho \end{bmatrix}, \theta \right\rangle = 1
\]
with zero indeterminacy.
\end{lemma}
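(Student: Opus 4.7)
The approach is to adapt the proof of the scalar Toda bracket $\langle \tau, \theta, \rho\rangle = 1$ to the matric setting.

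First, I would verify the bracket is well-defined and has zero indeterminacy. The required matrix products vanish: $\begin{bmatrix}\rho & \tau\end{bmatrix}\begin{bmatrix}\tau \\ \rho\end{bmatrix} = \rho\tau + \tau\rho = 2\rho\tau = 0$ in $\F_2$-coefficients, and $\begin{bmatrix}\tau \\ \rho\end{bmatrix}\theta = \begin{bmatrix}\tau\theta \\ \rho\theta\end{bmatrix} = 0$ since both $\tau\theta$ and $\rho\theta$ vanish in $\Mt$ by degree reasons. The indeterminacy is given by the matric analogue of the double coset considered in the scalar proof; checking the relevant bidegrees shows every contributing term lies in a degree where $\Mt$ is zero, so the bracket is a single element of $H^{0,0}(pt) \cong \F_2$.

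Next, I would compute the value using the ``hidden extension'' perspective from Section 3.1.1 of \cite{I}, following the same approach sketched at the end of the proof of $\langle \tau, \theta, \rho\rangle = 1$. Consider the two-cell $C_2$-complex obtained as the cofiber of the wedge map corresponding to $\begin{bmatrix}\tau \\ \rho\end{bmatrix}$ between appropriate representation spheres. Since $\tau\theta = \rho\theta = 0$, the class $\theta$ lifts to a class $\bar\theta$ in the cohomology of this cofiber, and the matric bracket records how $\begin{bmatrix}\rho & \tau\end{bmatrix}$ interacts with this lift. Computing the cohomology of the cofiber using the long exact sequence together with the forgetful lemma (Lemma \ref{forgetful les}) should identify the obstruction with a hidden extension analogous to the $\tau$-extension depicted in Figure \ref{hidden extension}, forcing it to be the nonzero element of $H^{0,0}(pt)$.

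The hardest step is the bookkeeping: pinning down the correct representation spheres so the bracket lands in $H^{0,0}(pt)$, and distinguishing the matric contribution from a naive sum of scalar Toda brackets. The latter point is subtle because the only candidate scalar Toda brackets one might hope to sum are $\langle \rho, \tau, \theta\rangle$ and $\langle \tau, \rho, \theta\rangle$, but neither is defined since $\rho\tau$ and $\tau\rho$ are both nonzero in $\Mt$. The matric structure is essential: only the commutativity $\rho\tau = \tau\rho$ together with $\F_2$-coefficients makes $\rho\tau + \tau\rho$ vanish, which is a strictly weaker null-composition than any scalar bracket would require. Overcoming this obstacle should reduce to directly identifying the single nonzero candidate via the hidden extension computation.
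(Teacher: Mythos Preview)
Your proposal takes a different route from the paper, and while the geometric/hidden-extension strategy is in principle workable, it is considerably more laborious than what the paper actually does. The paper's proof is essentially a one-line reduction to the scalar bracket already computed: using the juggling (shuffle) relation for Toda brackets, one multiplies the matric bracket by $\rho$ and obtains
\[
\left\langle \begin{bmatrix} \rho & \tau \end{bmatrix}, \begin{bmatrix} \tau \\ \rho \end{bmatrix}, \theta \right\rangle \cdot \rho
= \begin{bmatrix} \rho & \tau \end{bmatrix} \cdot \left\langle \begin{bmatrix} \tau \\ \rho \end{bmatrix}, \theta, \rho \right\rangle
= \rho\langle \tau,\theta,\rho\rangle + \tau\langle \rho,\theta,\rho\rangle
= \rho\cdot 1 + \tau\cdot 0 = \rho,
\]
where $\langle \rho,\theta,\rho\rangle = 0$ for degree reasons. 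Since the matric bracket is a single element of $H^{0,0}(pt)\cong\F_2$ whose product with $\rho$ is nonzero, it equals $1$. This completely avoids building any new cofiber or computing any new cohomology.

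Two smaller points about your write-up. First, your indeterminacy check is slightly off: the right-hand indeterminacy is $H^{0,2}(pt)\cdot\theta$, and $H^{0,2}(pt)=\F_2\{\tau^2\}$ is \emph{not} zero; what makes the indeterminacy vanish is that $\tau^2\theta=0$, not that the degree is empty. Second, the cofiber of the map corresponding to $\begin{bmatrix}\tau\\\rho\end{bmatrix}$ is not a two-cell complex---the source or target is already a wedge of two spheres---so the bookkeeping you flag as ``the hardest step'' would indeed be heavier than in the scalar case. The paper sidesteps all of this by feeding the scalar result $\langle\tau,\theta,\rho\rangle=1$ directly into the juggling formula.
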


\begin{proof}
Notice the matric Toda bracket is defined because $\tau \theta = 0 = \rho \theta$.  Again there is zero indeterminacy because $H^{0,2}(pt)\theta \equiv 0$, so the matric Toda bracket is a single element of $H^{0,0}(pt) \cong \F_2$.  Since $\langle \tau, \theta, \rho \rangle = 1$ we can use a juggling formula to shift the bracket and write
\begin{align*}
\left\langle \begin{bmatrix} \rho & \tau \end{bmatrix}, \begin{bmatrix} \tau \\ \rho \end{bmatrix}, \theta \right\rangle \cdot \rho
&= \begin{bmatrix} \rho & \tau \end{bmatrix} \cdot \left\langle \begin{bmatrix} \tau \\ \rho \end{bmatrix}, \theta, \rho \right\rangle \\
&= \rho \cdot \langle \tau, \theta, \rho \rangle + \tau \cdot \langle \rho, \theta, \rho \rangle \\
&= \rho \cdot 1 + \tau \cdot 0 \\
&= \rho
\end{align*}
where $\langle \rho, \theta, \rho \rangle = 0$ for degree reasons.  The matric Toda bracket is an element of $H^{0,0}(pt)$ that is nonzero when multiplied by $\rho$, so it must be nonzero.  This completes the proof.
\end{proof}

Using these two Toda brackets and juggling formulas we get a number of results restricting the types of $\Mt$-modules we can see in cohomology.  We present these results more generally as restrictions on the homotopy of a spectrum.  Recall $H\underline{\F_2}$ denotes the genuine equivariant Eilenberg--MacLane spectrum for $\underline{\F_2}$ so that its bigraded equivariant homotopy is $\pi_{*,*}H\underline{\F_2} = \Mt$.  If $C$ is an $H\underline{\F_2}$-module then $\pi_{*,*}(C)$ is an $\Mt$-module and has Toda brackets.
In each of the following lemmas we take $C$ to be any $H\underline{\F_2}$-module.  In particular, if $X$ is a $C_2$-CW complex, the function spectrum $F(X_+,H\underline{\F_2})$ is an $H\underline{\F_2}$-module, and we can realize $H^{*,*}(X)$ as $\pi_{-*,-*}F(X_+,H\underline{\F_2}) \cong H^{*,*}(X)$.

\begin{lemma}\label{ideal rho tau}
If $x \in \pi_{*,*}(C)$ and $\theta x = 0$, then $x \in (\rho,\tau)\pi_{*,*}(C)$.
\end{lemma}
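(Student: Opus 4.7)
The plan is to derive the conclusion directly from the matric Toda bracket relation $\left\langle \begin{bmatrix} \rho & \tau \end{bmatrix}, \begin{bmatrix} \tau \\ \rho \end{bmatrix}, \theta \right\rangle = 1$ of the previous lemma, by means of a juggling identity that moves $\theta$ into the middle slot and inserts $x$ as the third entry.

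First, I would observe that the matric Toda bracket $\left\langle \begin{bmatrix} \tau \\ \rho \end{bmatrix}, \theta, x \right\rangle$ is defined: by degree we have $\tau\theta = \rho\theta = 0$ in $\Mt$, and the hypothesis $\theta x = 0$ supplies the second vanishing composite. This bracket specifies a coset in $\pi_{*,*}(C)^{\oplus 2}$; let $\begin{bmatrix} \alpha \\ \beta \end{bmatrix}$ denote any representative. The standard juggling identity for matric Toda brackets then gives
\[
x \;=\; 1\cdot x \;\in\; \left\langle \begin{bmatrix} \rho & \tau \end{bmatrix}, \begin{bmatrix} \tau \\ \rho \end{bmatrix}, \theta \right\rangle\cdot x \;\subseteq\; \begin{bmatrix} \rho & \tau \end{bmatrix}\cdot \left\langle \begin{bmatrix} \tau \\ \rho \end{bmatrix}, \theta, x \right\rangle
\]
modulo indeterminacy, and the representative main term is $\rho\alpha + \tau\beta$, manifestly an element of $(\rho,\tau)\pi_{*,*}(C)$.

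The remaining task is to verify that every term contributing to the indeterminacy on the right-hand side also lies in $(\rho,\tau)\pi_{*,*}(C)$. The indeterminacy of the inner bracket takes the shape $\begin{bmatrix} \tau \\ \rho \end{bmatrix}\cdot u + \begin{bmatrix} v_1 \\ v_2 \end{bmatrix}\cdot x$ for some $u, v_1, v_2 \in \pi_{*,*}(C)$; left-multiplying by $\begin{bmatrix} \rho & \tau \end{bmatrix}$ yields $(\rho\tau+\tau\rho)u + \rho v_1 x + \tau v_2 x = \rho v_1 x + \tau v_2 x$, which is plainly in $(\rho,\tau)\pi_{*,*}(C)$. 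The correction term introduced by the juggling identity itself likewise factors through multiplication by $\rho$ or $\tau$ on the left. Hence the entire right-hand side sits inside $(\rho,\tau)\pi_{*,*}(C)$, and so does $x$. The main obstacle is precisely this final bookkeeping of indeterminacy, which is resolved by the structural observation that every correction term arises by left-multiplying some element of $\pi_{*,*}(C)$ by an entry of the row vector $\begin{bmatrix} \rho & \tau \end{bmatrix}$.
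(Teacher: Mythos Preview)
Your proposal is correct and follows essentially the same approach as the paper: both use the matric Toda bracket $\left\langle \begin{bmatrix} \rho & \tau \end{bmatrix}, \begin{bmatrix} \tau \\ \rho \end{bmatrix}, \theta \right\rangle = 1$ together with the juggling identity to write $x$ as $\begin{bmatrix} \rho & \tau \end{bmatrix}\cdot \left\langle \begin{bmatrix} \tau \\ \rho \end{bmatrix}, \theta, x \right\rangle = \rho\langle\tau,\theta,x\rangle + \tau\langle\rho,\theta,x\rangle$. You are slightly more explicit than the paper in checking that the indeterminacy also lands in $(\rho,\tau)\pi_{*,*}(C)$, which is a reasonable precaution.
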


\begin{proof}
Assume $\theta x = 0$.  Then
\begin{align*}
x = 1 \cdot x
&= \left\langle \begin{bmatrix} \rho & \tau \end{bmatrix}, \begin{bmatrix} \tau \\ \rho \end{bmatrix}, \theta \right\rangle \cdot x \\
&= \begin{bmatrix} \rho & \tau \end{bmatrix} \cdot \left\langle \begin{bmatrix} \tau \\ \rho \end{bmatrix}, \theta, x \right\rangle \\
&= \rho \cdot \langle \tau, \theta, x \rangle + \tau \cdot \langle \rho, \theta, x \rangle,
\end{align*}
which completes the proof.
\end{proof}

\begin{lemma}\label{ideal theta}
If $x \in \pi_{*,*}(C)$ and $\rho x = \tau x = 0$ then $x \in (\theta)\pi_{*,*}(C)$.
\end{lemma}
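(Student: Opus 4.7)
The plan is to mimic the proof of Lemma~\ref{ideal rho tau}, but using a matric Toda bracket with $\theta$ in the outermost (left) position, so that juggling pulls out a factor of $\theta$ rather than $\rho$ or $\tau$. Concretely, I would first establish the companion identity
\[
\left\langle \theta,\ \begin{bmatrix} \tau & \rho \end{bmatrix},\ \begin{bmatrix} \rho \\ \tau \end{bmatrix} \right\rangle = 1
\]
with zero indeterminacy. This bracket is defined because $\theta \cdot \begin{bmatrix}\tau & \rho\end{bmatrix} = 0$ and $\begin{bmatrix}\tau & \rho\end{bmatrix} \cdot \begin{bmatrix}\rho \\ \tau\end{bmatrix} = \tau\rho + \rho\tau = 0$ in characteristic $2$; its value lies in $H^{0,0}(pt) \cong \F_2$, with trivial indeterminacy by a degree count analogous to the one in the preceding proposition. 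That the value is $1$ follows from the order-reversal (with matrix transposition) symmetry of triple Toda brackets applied to the matric bracket $\left\langle \begin{bmatrix}\rho & \tau\end{bmatrix},\ \begin{bmatrix}\tau \\ \rho\end{bmatrix},\ \theta \right\rangle = 1$ just established, which is sign-free in characteristic $2$.

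With that identity in hand, the proof is a single juggling step. The hypotheses $\rho x = \tau x = 0$ give $\begin{bmatrix}\rho \\ \tau\end{bmatrix} x = 0$, which is exactly the condition needed to juggle the rightmost entry into the bracket:
\begin{align*}
x = 1 \cdot x
&= \left\langle \theta,\ \begin{bmatrix}\tau & \rho\end{bmatrix},\ \begin{bmatrix}\rho \\ \tau\end{bmatrix} \right\rangle \cdot x \\
&= \theta \cdot \left\langle \begin{bmatrix}\tau & \rho\end{bmatrix},\ \begin{bmatrix}\rho \\ \tau\end{bmatrix},\ x \right\rangle.
\end{align*}
This exhibits $x$ as an element of $(\theta)\pi_{*,*}(C)$, as required.

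The main obstacle is justifying the flipped matric Toda bracket identity. Once order-reversal/transposition symmetry is invoked, the identity drops out of the previously computed matric bracket, and the juggling step is formally identical to the one used in Lemma~\ref{ideal rho tau}. No further structural input on $\pi_{*,*}(C)$ is required, and the same approach would fail if we tried to use only the single Toda bracket $\langle \tau, \theta, \rho\rangle = 1$, since juggling it against $x$ yields $x \in (\tau)\pi_{*,*}(C)$ rather than $x \in (\theta)\pi_{*,*}(C)$ — the matric bracket is essential for placing $\theta$ at an endpoint.
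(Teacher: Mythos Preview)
Your proof is correct and uses essentially the same approach as the paper: the matric Toda bracket with $\theta$ at an endpoint, followed by a juggling step that absorbs $x$ into the bracket. The paper's version is slightly more direct in that it does not pass through a reversed bracket: it simply multiplies the already-established identity $\left\langle \begin{bmatrix} \rho & \tau \end{bmatrix}, \begin{bmatrix} \tau \\ \rho \end{bmatrix}, \theta \right\rangle = 1$ on the \emph{left} by $x$ and juggles to obtain $x = \left\langle x, \begin{bmatrix} \rho & \tau \end{bmatrix}, \begin{bmatrix} \tau \\ \rho \end{bmatrix} \right\rangle \cdot \theta$, so your order-reversal step is unnecessary (though not wrong).
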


\begin{proof}
Assume $\rho x = \tau x = 0$.  Then
\[
x = x \cdot 1 = x \cdot \left\langle \begin{bmatrix} \rho & \tau \end{bmatrix}, \begin{bmatrix} \tau \\ \rho \end{bmatrix}, \theta \right\rangle = \left\langle x, \begin{bmatrix} \rho & \tau \end{bmatrix}, \begin{bmatrix} \tau \\ \rho \end{bmatrix} \right\rangle \cdot \theta,
\]
which completes the proof.
\end{proof}

\begin{lemma}\label{ideal rho}
If $x \in \pi_{*,*}(C)$ and $\tau x = 0$ then $x \in (\rho)\pi_{*,*}(C)$.
\end{lemma}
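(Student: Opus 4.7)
The plan is to mimic the structure of Lemmas \ref{ideal rho tau} and \ref{ideal theta}, using the juggling formula together with the Toda bracket $\langle \tau, \theta, \rho \rangle = 1$ to rewrite $x$ as a $\rho$-multiple. The hypothesis $\tau x = 0$, together with the fact that $\theta\tau = 0$ in $\Mt$ (for degree reasons, as already noted in the computation of $\langle \tau, \theta, \rho\rangle$), is exactly what is needed to guarantee that the three-fold Toda bracket $\langle x, \tau, \theta\rangle$ is defined in $\pi_{*,*}(C)$.

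With that bracket available, I would write
\[
x \;=\; x\cdot 1 \;=\; x\cdot \langle \tau,\theta,\rho\rangle \;\in\; \langle x,\tau,\theta\rangle \cdot \rho,
\]
where the containment is the standard juggling relation shifting an entry past the bracket. Since $\langle \tau,\theta,\rho\rangle = \{1\}$ on the nose (zero indeterminacy, as proved above), the leftmost expression is literally $x$, and therefore $x$ agrees with some representative $y\rho$ where $y \in \langle x,\tau,\theta\rangle \subseteq \pi_{*,*}(C)$. This exhibits $x$ as an element of $(\rho)\pi_{*,*}(C)$, completing the argument.

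The step that needs a moment of care is the juggling identity itself: one wants the version that swaps an element past a three-fold bracket, and one must track the indeterminacies carefully enough to confirm that the equality $x = y\rho$ is genuine and not just an equality modulo some a~priori nontrivial subgroup. However, because the bracket being substituted, namely $\langle \tau,\theta,\rho\rangle$, has zero indeterminacy, the juggling comes out as an equality of sets on that side, so any representative of $\langle x,\tau,\theta\rangle$ multiplied by $\rho$ lands at $x$, and no further bookkeeping is required. This is the main (and essentially only) subtlety; once it is in place, the lemma follows in two lines, exactly parallel to the preceding two lemmas.
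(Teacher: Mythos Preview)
Your proposal is correct and matches the paper's own argument essentially line for line: the paper writes $x = 1\cdot x = x\cdot\langle \tau,\theta,\rho\rangle = \langle x,\tau,\theta\rangle\cdot\rho$ and is done. Your extra discussion of indeterminacy is fine but more than the paper bothers with, since the zero indeterminacy of $\langle\tau,\theta,\rho\rangle$ already makes the juggling an honest equality.
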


\begin{proof}
Assume $\tau x = 0$.  Then $x = 1 \cdot x = x \cdot \langle \tau, \theta, \rho \rangle = \langle x, \tau, \theta \rangle \cdot \rho$ so we are done.
\end{proof}

\begin{lemma}\label{ideal tau}
If $x \in \pi_{*,*}(C)$ and $\rho x = 0$ then $x \in (\tau)\pi_{*,*}(C)$.
\end{lemma}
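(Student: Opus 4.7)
The plan is to mirror the proof of Lemma \ref{ideal rho} with the roles of $\rho$ and $\tau$ swapped, leveraging the symmetry of the triple Toda bracket $\langle \tau, \theta, \rho \rangle = 1$ already established above. Since we are working over $\F_2$, the standard symmetry relation $\langle a, b, c \rangle = \langle c, b, a \rangle$ holds on the nose, so
\[
\langle \rho, \theta, \tau \rangle = \langle \tau, \theta, \rho \rangle = 1.
\]
The indeterminacy of the reversed bracket is $\rho H^{-1,-1}(pt) + H^{0,-1}(pt)\tau$, which is again zero for degree reasons, so the reversed bracket is also a well-defined single element of $H^{0,0}(pt) \cong \F_2$.

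Now suppose $x \in \pi_{*,*}(C)$ satisfies $\rho x = 0$. Since $\rho \theta = 0$ in $\Mt$, the triple Toda bracket $\langle x, \rho, \theta \rangle$ is defined in $\pi_{*,*}(C)$. Applying the juggling formula, I would compute
\[
x = x \cdot 1 = x \cdot \langle \rho, \theta, \tau \rangle = \langle x, \rho, \theta \rangle \cdot \tau,
\]
which exhibits $x$ as an element of $(\tau)\pi_{*,*}(C)$, as required.

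There is no serious obstacle here: the main verification is just that the relevant Toda brackets are defined (both follow from $\rho x = 0$ and $\rho \theta = \tau \theta = 0$) and that the indeterminacy vanishes so that the juggling identity holds as a literal equation rather than modulo indeterminacy. Both points are direct analogues of the arguments in Lemma \ref{ideal rho}, so the proof is essentially a one-line juggling calculation.
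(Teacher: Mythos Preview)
Your proof is correct and is exactly the analogue the paper intends: the paper's own proof simply reads ``The proof is analogous to the proof of Lemma~\ref{ideal rho},'' and your argument carries this out, using the symmetry $\langle \rho,\theta,\tau\rangle = \langle \tau,\theta,\rho\rangle = 1$ and the same juggling formula with $\rho$ and $\tau$ interchanged. An equally valid variant, which avoids invoking the symmetry explicitly, is to juggle on the other side: $x = \langle \tau,\theta,\rho\rangle \cdot x = \tau \cdot \langle \theta,\rho,x\rangle$.
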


\begin{proof}
The proof is analogous to the proof of Lemma \ref{ideal rho}.
\end{proof}

Next we observe two vanishing regions in the cohomology of any finite $C_2$-CW complex.  These regions are depicted on the left side of Figure \ref{vanishing fig}.

\begin{figure}[ht]
\begin{tikzpicture}[scale=0.4]

\draw[gray] (-4.5,0) -- (8.5,0);
\draw[gray] (0,-6.5) -- (0,5.5);

\fill[lightgray, opacity=0.25] (0,5.5) -- (0,-2) -- (-4.5,-6.5) -- (-4.5,5.5) -- cycle;
\draw[thick] (0,5.5) -- (0,-2) -- (-4.5,-6.5);

\fill[lightgray, opacity=0.25] (4,-6.5) -- (4,0) -- (8.5,5.5) -- (8.5,-6.5) -- cycle;
\draw[thick] (4,-6.5) -- (4,0) -- (8.5,5.5);

\draw (0.1,-2) -- (-0.1,-2) node[left] {\tiny $-2$};
\draw (8.5,0) node[below, black] {\small $p$};
\draw (0,5.5) node[left, black] {\small $q$};
\draw (4,0) node[below right] {\tiny $m$};

\end{tikzpicture}\hfill
\begin{tikzpicture}[scale=0.4]

\draw[gray] (-4.5,0) -- (8.5,0);
\draw[gray] (0,-6.5) -- (0,5.5);

\fill[lightgray, opacity=0.25] (0,5.5) -- (0,-2) -- (-4.5,-6.5) -- (-4.5,5.5) -- cycle;
\draw[thick, gray] (0,5.5) -- (0,-2) -- (-4.5,-6.5);

\fill[lightgray, opacity=0.25] (4,-6.5) -- (4,0) -- (8.5,5.5) -- (8.5,-6.5) -- cycle;
\draw[thick, gray] (4,-6.5) -- (4,0) -- (8.5,5.5);

\fill[gray] (0,0) -- (4,0) -- (4,4) -- cycle;
\draw[thick] (0,0) -- (4,0) -- (4,4) -- cycle;

\draw (0.1,-2) -- (-0.1,-2) node[left] {\tiny $-2$};
\draw (8.5,0) node[below, black] {\small $p$};
\draw (0,5.5) node[left, black] {\small $q$};
\draw (0.1,4) -- (-0.1,4) node[left] {\tiny $m$};
\draw (4,0) node[below right] {\tiny $m$};

\end{tikzpicture}
\caption{Vanishing regions and region containing $\Mt$ generators.}\label{vanishing fig}
\end{figure}

\begin{lemma}\label{vanishing}
If $X$ is a finite $C_2$-CW complex of dimension $m$ then $H^{p,q}(X) = 0$
\begin{enumerate}
\item whenever $p < 0$ and $q > p-2$, and
\item whenever $p > m$ and $q < p-m$.
\end{enumerate}
\end{lemma}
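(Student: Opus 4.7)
The plan is to prove both vanishing statements simultaneously by induction on the cellular filtration
\[
\mathrm{pt} \subseteq X_0 \subseteq X_1 \subseteq \cdots \subseteq X_m = X,
\]
using the long exact sequences from Section \ref{Computational tools}. For bookkeeping, I will write $V_1 = \{(p,q) : p < 0,\ q > p-2\}$ and $V_2(k) = \{(p,q) : p > k,\ q < p - k\}$, so that the goal becomes $H^{p,q}(X) = 0$ on $V_1 \cup V_2(m)$.

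The first task is to verify the vanishing directly for each type of orbit cell that appears as a filtration quotient. A trivial $n$-cell $(C_2/C_2)_+ \Smash S^n = S^{n,0}$ with $0 \leq n \leq m$ has cohomology $\Sigma^{n,0} \Mt$, whose upper cone occupies $\{n \leq p \leq q+n\}$ and whose lower cone occupies $\{p \leq n,\ q \leq p - n - 2\}$. For $(p,q) \in V_1$, the inequality $p < 0 \leq n$ excludes the upper cone and $q > p - 2 \geq p - n - 2$ excludes the lower cone; for $(p,q) \in V_2(m)$, the inequality $p > m \geq n$ excludes the lower cone and $q < p - m \leq p - n$ excludes the upper cone. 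A free $n$-cell $(C_2)_+ \Smash S^n$ has cohomology $\Sigma^{n,0} \A_0$, which is concentrated on the single column $p = n \in [0,m]$ and so automatically avoids both regions.

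For the induction, I would assume $\tilde{H}^{*,*}(X_k)$ vanishes on $V_1 \cup V_2(k)$. The quotient $X_{k+1}/X_k$ is a wedge of $(k+1)$-cells, so by the first step its cohomology vanishes on $V_1 \cup V_2(k+1)$. Since $V_2(k+1) \subseteq V_2(k)$ --- because $p > k+1$ and $q < p - k - 1$ imply $p > k$ and $q < p - k$ --- the inductive hypothesis also forces $\tilde{H}^{p,q}(X_k) = 0$ on $V_1 \cup V_2(k+1)$. The long exact sequence
\[
\tilde{H}^{p,q}(X_{k+1}/X_k) \to \tilde{H}^{p,q}(X_{k+1}) \to \tilde{H}^{p,q}(X_k)
\]
then sandwiches $\tilde{H}^{p,q}(X_{k+1})$ between two zero groups on $V_1 \cup V_2(k+1)$, and iterating up to $k = m - 1$ should finish the argument.

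There is no real obstacle here: the argument is a routine skeletal induction that just leverages the shapes of the $\Mt$ and $\A_0$ cones together with the bound $n \leq m$ on cell dimensions. The mildest point of care is confirming that $V_2(k)$ shrinks as $k$ grows so that the inductive hypothesis is strong enough at each stage; the region $V_1$ is independent of the dimension and poses no issue.
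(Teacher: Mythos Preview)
Your proposal is correct and follows exactly the approach the paper indicates: an induction on the $C_2$-CW filtration, using that the cohomologies of the two orbit types $\Mt$ and $\A_0$ (and their shifts) already satisfy the vanishing in the stated regions. You have simply filled in the bookkeeping the paper omits, including the explicit check that $V_2(k+1)\subseteq V_2(k)$ so the inductive hypothesis is strong enough; nothing further is needed.
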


\begin{proof}
Both statements follow easily by induction on the $C_2$-CW filtration for $X$ since the cohomologies of the orbits $\Mt = H^{*,*}(pt)$ and $\A_0 = H^{*,*}(C_2)$ satisfy these vanishing regions.
\end{proof}

An immediate corollary restricts the bidegree of a generator for a shifted copy of $\Mt$ in $H^{*,*}(X)$.  The region where $\Mt$ generators can lie is depicted by the triangle on the right side of Figure \ref{vanishing fig}.

\begin{cor}\label{bidegree corollary}
Let $X$ be a finite $C_2$-CW complex with dimension $m$.  Any generator for a copy of $\Mt$ in $H^{*,*}(X)$ must lie in a bidegree $(p,q)$ satisfying $0 \leq p \leq m$ and $0 \leq q \leq p$.  Thus $(p,q)$ corresponds to an actual representation.
\end{cor}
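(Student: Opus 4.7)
The plan is to apply Lemma~\ref{vanishing} systematically against the free $\Mt$-module structure carried by a hypothetical generator $x \in H^{p,q}(X)$ of a copy of $\Mt$. Freeness of $\Mtb{x}$ means that for every nonzero $\mu \in \Mt$ at bidegree $(P',Q')$, the element $\mu x \in H^{*,*}(X)$ is nonzero at bidegree $(p+P',\,q+Q')$, and this bidegree cannot lie in either vanishing region of Lemma~\ref{vanishing}. By letting $\mu$ run along the boundaries of the upper and lower cones of $\Mt$, this forces a collection of linear inequalities on $(p,q)$.

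To organize the bookkeeping I would pick four extremal rays along those boundaries and let $k \to \infty$:
\begin{itemize}
\item $\tau^k x$ sits at $(p,q+k)$ and eventually enters vanishing region~(1) unless $p \geq 0$;
\item $\rho^k x$ sits at $(p+k,q+k)$ and eventually enters vanishing region~(2) unless $q \geq p - m$;
\item $(\theta/\tau^k) x$ sits at $(p,q-2-k)$ and eventually enters vanishing region~(2) unless $p \leq m$;
\item $(\theta/\rho^k) x$ sits at $(p-k,q-2-k)$ and eventually enters vanishing region~(1) unless $q \leq p$.
\end{itemize}

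Assembling the four inequalities yields $0 \leq p \leq m$ together with $p - m \leq q \leq p$. To finish, I would check that the resulting locus of allowed bidegrees lies inside the triangle on the right of Figure~\ref{vanishing fig}, so that in fact $0 \leq q \leq p$. Since $\{(p,q) : 0 \leq q \leq p\}$ is precisely the set of bidegrees of actual (non-virtual) real $C_2$-representations, the concluding sentence of the corollary would follow immediately. I do not anticipate a serious obstacle: the entire argument is a direction-by-direction case analysis in which the genuine content is supplied by Lemma~\ref{vanishing} and the explicit description of the two cones of $\Mt$.
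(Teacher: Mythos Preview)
Your approach is precisely the paper's: use the two vanishing regions of Lemma~\ref{vanishing} to constrain where a shifted copy of $\Mt$ can sit. Your four ray computations are correct and yield exactly the parallelogram
\[
0 \le p \le m, \qquad p-m \le q \le p.
\]

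The gap is in your last step. The parallelogram does \emph{not} lie inside the triangle $\{0 \le q \le p \le m\}$ of Figure~\ref{vanishing fig}: for $m \ge 2$ the bidegree $(p,q)=(1,-1)$ satisfies all four of your inequalities yet has $q<0$. One checks directly that $\Sigma^{1,-1}\Mt$ avoids both vanishing regions when $m=2$: every element $\rho^a\tau^b x$ of the upper cone sits at $(1+a,\,-1+a+b)$, where $1+a\ge 0$ rules out region~(1) and $-1+a+b \ge (1+a)-2$ rules out region~(2); every element $(\theta/\rho^a\tau^b)x$ of the lower cone sits at $(1-a,\,-3-a-b)$, where $-3-a-b \le (1-a)-2$ rules out region~(1) and $1-a \le 2$ rules out region~(2). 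So Lemma~\ref{vanishing} by itself gives only $p-m \le q \le p$, not $0 \le q$. The paper's one-sentence proof (``otherwise a copy of $\Mt$ would intersect one of the vanishing regions'') shares this gap: what genuinely follows from the vanishing lemma is the parallelogram, and the sharper lower bound $q\ge 0$ in the corollary, if true, needs an argument beyond the two stated vanishing regions. Your plan would establish $0\le p\le m$ and $q\le p$ correctly, but not the inequality $0\le q$.
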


\begin{proof}
The proof follows immediately from Lemma \ref{vanishing} since otherwise a copy of $\Mt$ would intersect one of the vanishing regions.
\end{proof}

The last lemma in this section is key to the proof of the main theorem as we now show that if $\theta$ acts trivially on a nice submodule of $H^{*,*}(X)$, then every element is not $\tau$-torsion and is infinitely divisible by $\tau$, making the submodule an $\F_2[\tau,\tau^{-1},\rho]$-module.  The hypotheses of the following lemma are somewhat technical.  They guarantee this submodule is the homotopy of an $H\underline{\F_2}$-module so we may apply the various Toda bracket results.  We also require the submodule to be a direct summand of the cohomology of a finite $C_2$-CW complex in order to use both $\rho$-localization and the vanishing regions.

\begin{lemma}\label{graded PID}
Let $X$ be a finite $C_2$-CW complex and let $C$ be an $H\underline{\F_2}$-module with $\pi_{*,*}(C) \subseteq H^{*,*}(X)$. Suppose that, as an $\Mt$-module, $\pi_{*,*}(C)$ is a direct summand of $H^{*,*}(X)$.  Furthermore, suppose this summand is killed by $\theta$ so that $\theta x = 0$ for all $x \in \pi_{*,*}(C)$.
Then $\cdot \tau: \pi_{*,*}(C) \to \pi_{*,*}(C)$ is an automorphism, making $\pi_{*,*}(C)$ naturally an $\F_2[\tau,\tau^{-1},\rho]$-module.
\end{lemma}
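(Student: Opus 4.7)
My plan is to prove injectivity and surjectivity of $\cdot \tau$ on $\pi_{*,*}(C)$ separately.

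For injectivity I would combine $\rho$-localization with Lemma \ref{ideal theta}. By the remark following Lemma \ref{rho localization}, $\rho^{-1}H^{*,*}(X)$ is $\tau$-torsion free, and since localization preserves direct sums, the summand $\rho^{-1}\pi_{*,*}(C)$ inherits this property. If $\tau x = 0$ for some $x \in \pi_{*,*}(C)$, then $x$ vanishes in $\rho^{-1}\pi_{*,*}(C)$, so $\rho^N x = 0$ for some minimal $N \geq 0$. If $N = 0$ we are done; otherwise $y := \rho^{N-1}x$ is nonzero but satisfies both $\rho y = \rho^N x = 0$ and $\tau y = \rho^{N-1}\tau x = 0$, so Lemma \ref{ideal theta} places $y$ in $\theta \pi_{*,*}(C) = 0$, a contradiction.

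For surjectivity I would iterate Lemma \ref{ideal rho tau}. Given $x \in \pi_{p,q}(C)$, this produces for each $n \geq 1$ a decomposition
\[
x = \rho^n a_n + \tau c_n
\]
with $a_n \in \pi_{p-n, q-n}(C) \subseteq H^{p-n, q-n}(X)$ and $c_n \in \pi_{*,*}(C)$. The goal is to choose $n$ so that $a_n$ lies in a bidegree where the cohomology vanishes. For $q \geq p - 1$, Lemma \ref{vanishing}(1) kills $a_n$ as soon as $n > p$, giving $x = \tau c_n \in \tau \pi_{*,*}(C)$. For $q \leq p - 2$, the parallel iteration on the $\tau$-factor yields $x = \rho u_k + \tau^k v_k$ with $v_k \in \pi_{p, q-k}(C)$; when $p > m$, Lemma \ref{vanishing}(2) kills $v_k$ for $k$ large, placing $x$ in $\rho \pi_{*,*}(C)$, and one descends to the previously handled case.

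The main obstacle is the remaining middle range of bidegrees $(p,q)$ with $p \leq m$ and $-m \leq q - p \leq -2$, which lies in neither vanishing region. To handle these I would pass to the cokernel $M := \pi_{*,*}(C)/\tau\pi_{*,*}(C)$. Iteration of Lemma \ref{ideal rho tau} shows $M = \rho M$, and since $\tau$ acts as zero on $M$, we get that $M$ is a module over $\Mt/(\tau) \cong \F_2[\rho]$ on which $\rho$ is surjective, making it a graded module over the graded field $\F_2[\rho, \rho^{-1}]$. Being free along each diagonal $c = q - p$, it then suffices to exhibit, for each $c$ in the middle range, a single bidegree $(p, p+c)$ at which $M$ vanishes; I expect this step to combine Lemmas \ref{ideal rho} and \ref{ideal tau} with the just-established injectivity of $\tau$ and the $H\underline{\F_2}$-module structure of $C$, via Toda-bracket manipulations performed inside the direct summand.
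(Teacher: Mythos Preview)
Your injectivity argument is correct and essentially identical to the paper's.

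Your surjectivity argument, however, has a genuine gap in the ``middle range'' you yourself flag. The claim that $M = \pi_{*,*}(C)/\tau\pi_{*,*}(C)$ becomes a module over the graded field $\F_2[\rho,\rho^{-1}]$ is not justified: you have only shown $\rho$ acts \emph{surjectively} on $M$, not injectively. These are different conditions---for instance $\F_2[\rho,\rho^{-1}]/\F_2[\rho]$ has $\rho$ acting surjectively but not injectively. To get injectivity of $\rho$ on $M$ you would need: whenever $\rho x = \tau y$ in $\pi_{*,*}(C)$, then $x \in \tau\pi_{*,*}(C)$. None of Lemmas~\ref{ideal theta}--\ref{ideal tau} gives this, since their hypotheses require an honest equation $\rho x = 0$ or $\tau x = 0$, not merely $\rho x \in (\tau)$. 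Your final sentence (``I expect this step to combine\dots'') is a hope, not an argument.

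The paper avoids the case analysis on $(p,q)$ entirely with a short minimality argument. Once $\cdot\tau$ is injective, every nonzero element of $\pi_{*,*}(C)$ has nonnegative topological dimension, since otherwise some $\tau$-power of it would land in the first vanishing region of Lemma~\ref{vanishing}. Hence if surjectivity fails, there is a nonzero $y \notin \im(\tau)$ of \emph{minimal} topological dimension. Lemma~\ref{ideal rho tau} writes $y = \rho a + \tau b$; necessarily $a \neq 0$, and $a$ has strictly smaller topological dimension, so by minimality $a = \tau c$. Then $y = \tau(\rho c + b)$, a contradiction. This single paragraph replaces your entire case split and eliminates the middle-range problem.
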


\begin{proof}
First we show that multiplication by $\tau$ is injective.  If there exists a nonzero element $x \in \pi_{*,*}(C)$ with $\tau x = 0$, then there are two cases, either $x$ is $\rho$-torsion or $x$ survives $\rho$-localization.  Both cases lead to a contradiction.

\begin{enumerate}
\item Suppose $x$ is $\rho$-torsion so that $\rho^n x = 0$ for some $n$.  If $n=1$ then $\rho x =0$ and $\tau x = 0$.  Then by Lemma \ref{ideal theta}, $x$ is in the image of multiplication by $\theta$, contradicting that $\theta$ acts trivially on $\pi_{*,*}(C)$.
If $\rho^{n}x = 0$ but $\rho^{n-1}x \neq 0$ then $\rho^{n-1}x$ is killed by $\tau$ since $\rho$ and $\tau$ commute:
\[
\tau \rho^{n-1}x = \rho^{n-1} \tau x = 0.
\]
Again by Lemma \ref{ideal theta}, $\rho^{n-1}x$ is in the image of multiplication by $\theta$, a contradiction.
\item Suppose $x$ is not $\rho$-torsion so that $x$ survives $\rho$-localization.  Recall from Lemma \ref{rho localization}, any element surviving the $\rho$-localization of the cohomology of a finite space cannot be $\tau$-torsion.  Since $\pi_{*,*}(C)$ is a summand of $H^{*,*}(X)$, $x$ cannot be $\tau$-torsion, which contradicts the assumption that $\tau x = 0$.
\end{enumerate}
So indeed, $\tau x$ is nonzero for all $x \in \pi_{*,*}(C)$ and the map $\cdot \tau: \pi_{*,*}(C) \to \pi_{*,*}(C)$ is injective.  

Notice that injectivity of $\cdot \tau$ means that any nonzero element $x \in \pi_{*,*}(C)$ is not $\tau$-torsion so that $\tau^m x \neq 0$ for all $m$.  In particular, $x$ cannot be in a bidegree with negative topological dimension.  Otherwise, some $\tau$-multiple of $x$ would land in the first vanishing region and contradict Lemma \ref{vanishing}.  We will use this fact in the proof of surjectivity.

To show multiplication by $\tau$ is surjective, we assume to the contrary there is some nonzero homogenous element $y \in \pi_{*,*}(C)$ not in the image of $\cdot \tau$.  We may further assume that $y$ is an element of minimal topological dimension satisfying these hypotheses.  We can make this minimality assumption because, as we have already observed, injectivity of $\cdot \tau$ implies all elements of $\pi_{*,*}(C)$ have nonnegative topological dimension.

Since $\theta y = 0$, Lemma \ref{ideal rho tau} implies that $y \in (\rho, \tau)\pi_{*,*}(C)$.  So we can write $y = \rho a + \tau b$ for some homogeneous elements $a, b \in \pi_{*,*}(C)$.  Notice that $a \neq 0$, otherwise $y=\tau b$, contradicting our assumption that $y$ is not in the image of $\cdot \tau$.  The topological dimension of $a$ is one less than the topological dimension of $y$.
Since $y$ had minimal topological dimension and $a$ is nonzero, it must be that $a$ is in the image of $\cdot \tau$.  This means we can write $a = \tau c$ for some $c \in \pi_{*,*}(C)$, but now $y = \rho a + \tau b = \rho \tau c + \tau b = \tau (\rho c + \tau b)$, contradicting that $y$ is not in the image of $\cdot \tau$.

Surjectivity of $\cdot \tau$ completes the proof that multiplication by $\tau$ is an automorphism of $\pi_{*,*}(C)$ and so $\pi_{*,*}(C)$ is a $\F_2[\tau, \tau^{-1}, \rho]$-module.
\end{proof}


\section{The main theorem}\label{The main theorem}

We are now ready to state and prove the main theorem.

\begin{thm}\label{main thm}
For any finite\footnote{Note that finiteness is again required here.  One might hope for a generalization to locally finite $C_2$-CW complexes by allowing shifted copies of $\A_\infty$.  A counterexample is given at the end of this section in Example \ref{counterexample}.} $C_2$-CW complex $X$, there is a decomposition of the $RO(C_2)$-graded cohomology of $X$ with constant $\underline{\F_2}$-coefficients as
\[
H^{*,*}(X;\underline{\F_2}) \cong (\oplus_i \Sigma^{p_i,q_i} \Mt) \oplus (\oplus_j \Sigma^{r_j,0} \A_{n_j})
\]
as a module over $\Mt = H^{*,*}(pt;\underline{\F_2})$.  Here each $\R^{p_i,q_i}$ and $\R^{r_j,0}$ are elements of $RO(C_2)$ corresponding to actual representations so that $0 \leq q_i \leq p_i$ and $0 \leq r_j$.
\end{thm}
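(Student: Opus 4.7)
The plan is to first split off a free $\Mt$-summand $F$ whose generators are detected by $\theta$, then show the complement $Q$ is a finitely generated module over the graded PID $R := \F_2[\tau,\tau^{-1},\rho]$ and apply the classification theorem.

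For the first step, choose a maximal collection $\{x_i\}$ of $\Mt$-linearly independent homogeneous elements of $H^{*,*}(X)$ with $\theta x_i \neq 0$. By Lemma \ref{theta lemma} each $\Mtb{x_i}$ is free, so $F := \oplus_i \Sigma^{p_i,q_i}\Mt$ sits in $H^{*,*}(X)$. The vanishing regions (Lemma \ref{vanishing} and Corollary \ref{bidegree corollary}) bound the bidegrees of the $x_i$, and finite-dimensionality of each $H^{p,q}(X)$ over $\F_2$ ensures the collection is finite. Self-injectivity of $\Mt$ (Proposition \ref{injectivity prop}) makes the inclusion $F \hookrightarrow H^{*,*}(X)$ split, yielding $H^{*,*}(X) \cong F \oplus Q$, and maximality of $\{x_i\}$ forces $\theta q = 0$ for every $q \in Q$ (otherwise such a $q$ could be adjoined to the collection).

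To apply Lemma \ref{graded PID} I realize this decomposition at the spectrum level: each $x_i$ gives a map $S^{p_i,q_i} \to F(X_+, H\underline{\F_2})$, which extends to an $H\underline{\F_2}$-module map $\Sigma^{p_i,q_i} H\underline{\F_2} \to F(X_+, H\underline{\F_2})$, and wedging these and taking the cofiber gives an $H\underline{\F_2}$-module $C$ with $\pi_{-*,-*}C \cong Q$ (the boundary in the cofiber long exact sequence vanishes because the $\Mt$-module splitting exists). Lemma \ref{graded PID} then makes $\tau$ act invertibly on $Q$, giving $Q$ the structure of a graded $R$-module. The main obstacle is showing $Q$ is finitely generated over $R$; I would treat two cases separately. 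Lemma \ref{rho localization} identifies $\rho^{-1}Q$ with an $\Mt$-summand of $H^*_{sing}(X^{C_2}) \otimes_{\F_2} \rho^{-1}\Mt$, which is finitely generated over $\rho^{-1}\Mt$ since $X^{C_2}$ is a finite CW complex, so the non-$\rho$-torsion part of $Q$ needs only finitely many $R$-generators. For the $\rho$-torsion part, the second vanishing region of Lemma \ref{vanishing} combined with $\tau$-invertibility confines generators to the strip $0 \leq p \leq \dim X$ with finite $\F_2$-rank in each bidegree, so finitely many suffice.

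The classification of finitely generated graded modules over the PID $R$ then yields $Q \cong \bigoplus_k \Sigma^{a_k,b_k}R \oplus \bigoplus_j \Sigma^{r_j,s_j} R/(\rho^{n_j+1})$. Free summands $\Sigma^{a,b} R \cong \Sigma^{a,b}\A_\infty$ extend unboundedly in the $p$-direction, violating the second vanishing region, so they do not occur. Each torsion summand $R/(\rho^{n+1})$ is isomorphic to $\A_n$, and since $\tau$ acts invertibly I can absorb the weight shift $s_j$ into the generator to get $\Sigma^{r_j,0}\A_{n_j}$. Finally, Corollary \ref{bidegree corollary} places the $\Mt$-shifts in the cone $0 \leq q_i \leq p_i$ of actual representations, and the first vanishing region of Lemma \ref{vanishing} forces $r_j \geq 0$, completing the decomposition.
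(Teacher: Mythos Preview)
Your proof is correct and follows essentially the same architecture as the paper: split off free $\Mt$-summands using $\theta$-detection and self-injectivity, realize the complement $Q$ as the homotopy of an $H\underline{\F_2}$-module cofiber so that Lemma~\ref{graded PID} applies, then classify $Q$ over the graded PID $R$ and rule out free summands via the vanishing regions.

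The only substantive difference is in the finite-generation step. The paper argues that $\tau^{-1}H^{*,*}(X)$ is finitely generated over $\tau^{-1}\Mt\cong R$ by induction on the CW filtration, and then invokes Noetherianity of $R$ to conclude that the submodule $Q\cong\tau^{-1}Q\subseteq\tau^{-1}H^{*,*}(X)$ is finitely generated. Your argument via vanishing regions plus $\tau$-invertibility also works, but the case split into $\rho$-torsion and non-$\rho$-torsion is unnecessary: once $\tau$ acts invertibly, the two vanishing regions already force $Q$ to be concentrated in $0\le p\le\dim X$, and a finite $\F_2$-basis for $\bigoplus_{p}Q^{p,0}$ then generates all of $Q$ over $\F_2[\tau,\tau^{-1}]\subset R$. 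Your $\rho$-localization argument for the non-torsion case is also not quite complete as written, since finite generation of $\rho^{-1}Q$ over $\rho^{-1}\Mt=\F_2[\tau,\rho,\rho^{-1}]$ does not by itself hand you $R$-generators for $Q$ before localizing; but this does not matter, as that case is already subsumed by the vanishing-strip argument.
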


\begin{proof}
Recall that $\theta$ detects copies of $\Mt$.  So if $x \in H^{*,*}(X)$ with $\theta x \neq 0$, then by Lemma \ref{theta lemma} there is a free submodule $\Mtb{x} \subseteq H^{*,*}(X)$.  The short exact sequence of $\Mt$-modules
\[
0 \to \Mtb{x} \to H^{*,*}(X) \to P \to 0
\]
splits because $\Mt$ is injective (see Proposition \ref{injectivity prop}).  Continuing to split off summands in this way, we have the split short exact sequence
\[
0 \to \oplus_i \Sigma^{p_i,q_i} \Mt \to H^{*,*}(X) \to Q \to 0
\]
where we can assume that every $x \in Q$ satisfies $\theta x = 0$.  This process terminates because by induction any given bidegree of $H^{*,*}(X)$ is finite-dimensional as a vector space.  Moreover, by Corollary \ref{bidegree corollary}, each bidegree $(p_i,q_i)$ corresponds to an actual representation so that $0 \leq q_i \leq p_i$.

We would now like to apply Lemma \ref{graded PID} to $Q$ to show that $Q$ is an $\F_2[\tau, \tau^{-1}, \rho]$-module.  However, in order to apply the lemma we need to realize $Q$ as the homotopy of an $H\underline{\F_2}$-module.  We can realize $H^{*,*}(X)$ as the homotopy of a function spectrum $H^{*,*}(X) \cong \pi_{-*,-*}F(X_+,H\underline{\F_2})$.
Since $F(X_+,H\underline{\F_2})$ is an $H\underline{\F_2}$-module, each free generator $u_i$ of $\Sigma^{p_i,q_i} \Mt$ in $H^{*,*}(X)$ gives rise to a map of spectra given by the composition
\[
S^{-p_i,-q_i} \Smash H\underline{\F_2} \xrightarrow{u_i \Smash id} F(X_+,H\underline{\F_2}) \Smash H\underline{\F_2} \xrightarrow{\mu} F(X_+,H\underline{\F_2}).
\]
Taking maps corresponding to each of the free generators of the shifted copies of $\Mt$ in $H^{*,*}(X)$, we obtain a map
\[
\bigvee_i \pars{S^{-p_i,-q_i} \Smash H\underline{\F_2}} \to F(X_+,H\underline{\F_2}).
\]
Let $C$ be the cofiber of this map.  The cofiber sequence induces a long exact sequence in homotopy of the form
\[
\cdots \to \pi_{*+1,*} C \to \oplus_i \Sigma^{p_i,q_i} \Mt \to H^{*,*}(X) \to \pi_{*,*}C \to \cdots
\]
where by construction $\Sigma^{p_i,q_i} \Mt \hookrightarrow H^{*,*}(X)$ is the inclusion of the copies of $\Mt$ from our original short exact sequence.  Because this is an inclusion and $\Mt$ is self-injective, we get a split short exact sequence and $\pi_{*,*}C \cong Q$.  We have now realized $Q$ as the homotopy of an $H\underline{\F_2}$-module, so Lemma \ref{graded PID} applies and indeed $Q$ is an $\F_2[\tau,\tau^{-1},\rho]$-module.

We still need to show $Q$ is finitely generated as an $\F_2[\tau,\tau^{-1},\rho]$-module so we may decompose $Q$ according to the graded version of the structure theorem for finitely generated modules over a graded PID.  Observe that $\tau^{-1}\Mt \cong \F_2[\tau,\tau^{-1},\rho]$ and that $\tau^{-1}H^{*,*}(X)$ is finitely generated as a $\tau^{-1} \Mt$-module.
The latter follows from induction on the $C_2$-CW filtration for $X$ since $\tau^{-1}\Mt$ is a graded PID and the $\tau$-localized cohomology of each orbit is finitely generated as a $\tau^{-1}\Mt$-module.  The submodule $Q \cong \tau^{-1}Q$ of $\tau^{-1}H^{*,*}(X)$ is also finitely generated as a $\tau^{-1}\Mt$-module since $\tau^{-1}\Mt$ is Noetherian.

Finally, applying the fundamental theorem for finitely generated graded modules over a graded PID to $Q$, it must be the case that
\[
Q \cong \tau^{-1}Q \cong (\oplus_k \Sigma^{p_k,q_k} \tau^{-1}\Mt) \oplus (\oplus_j \Sigma^{r_j,0} \tau^{-1}\Mt / (\rho^{n_j + 1}))
\]
where again we identify $\tau^{-1} \Mt \cong \F_2[\tau,\tau^{-1},\rho]$.  However, $X$ is a finite $C_2$-CW complex, so the second vanishing region from Lemma \ref{vanishing} implies $Q$ cannot contain any summands of the form $\Sigma^{p_k,q_k} \tau^{-1}\Mt$.  This means
\[
Q \cong \tau^{-1}Q \cong \oplus_j \Sigma^{r_j,0} \tau^{-1}\Mt / (\rho^{n_j + 1}) \cong \oplus_j \Sigma^{r_j,0} \A_{n_j} 
\]
Finally we can conclude that
\[
H^{*,*}(X;\underline{\F_2}) \cong (\oplus_i \Sigma^{p_i,q_i} \Mt) \oplus Q \cong (\oplus_i \Sigma^{p_i,q_i} \Mt) \oplus (\oplus_j \Sigma^{r_j,0} \A_{n_j})
\]
as desired.
\end{proof}

The reader may notice we used the condition that $X$ was finite several times in the previous proof.  At first glance, one might expect a generalization of the structure theorem to locally finite $C_2$-CW complexes if we allow shifted copies of $\A_\infty$.  This seems plausible because $S^\infty_a$ is locally finite.  As further evidence, $\A_\infty \cong \tau^{-1}\Mt \cong \F_2[\tau,\tau^{-1},\rho]$ appears near the end of the proof as part of the fundamental theorem for finitely generated graded modules over a graded PID.  The following counterexample demonstrates that such a generalization would also need to include other types of $\Mt$-modules.

\begin{example}\label{counterexample}
In this example we consider an infinite-dimensional locally finite $C_2$-CW complex whose cohomology is not $\A_\infty$.  Consider the $C_2$-CW complex $S^{\infty,\infty}$ formed by the colimit of the diagram
\[
S^{0,0} \to S^{1,1} \to S^{2,2} \to \cdots \to S^{n,n} \to \cdots
\]
where each map is a suspension of $\tilde{\rho}:S^{0,0} \to S^{1,1}$.  Notice that $S^{\infty,\infty}$ can be given a cell structure with two fixed points and a single equivariant $n$-cell $C_2 \times D^n$ for every $n>0$.  Alternatively, $S^{\infty,\infty}$ can be realized as the unreduced suspension of $S^{\infty}_a$.  Its cohomology is depicted in Figure \ref{cohomology counterexample}.  More precisely, we can describe $\tilde{H}^{*,*}(S^{\infty,\infty})$ as $\Sigma^{0,-1} N$, where $N$ is the quotient in the category of graded $\Mt$-modules in the short exact sequence
\[
0 \to \Mt[\rho^{-1}] \to \Mt[\tau^{-1}\rho^{-1}] \to N \to 0.
\]
\end{example}

\begin{figure}[h]
\begin{center}
\begin{tikzpicture}[scale=0.6]
\draw[gray] (-4.5,0) -- (4.5,0) node[below, black] {\small $p$};
\draw[gray] (0,-4.5) -- (0,4.5) node[left, black] {\small $q$};
\foreach \x in {-4,...,-1,1,2,...,4}
	\draw [font=\tiny, gray] (\x cm,2pt) -- (\x cm,-2pt) node[anchor=north] {$\x$};
\foreach \y in {-4,...,-1,1,2,...,4}
	\draw [font=\tiny, gray] (2pt,\y cm) -- (-2pt,\y cm) node[anchor=east] {$\y$};

\draw[thick] (-2.5,-4.5) -- (4.5,2.5);
\foreach \x in {-2,...,4}
  \draw[thick] (\x,\x-2) -- (\x,-4.5);

\draw[transparent] (5.5,0) node {RP2};
\end{tikzpicture}
\end{center}
\caption{Cohomology of $S^{\infty,\infty}$.}
\label{cohomology counterexample}
\end{figure}


\section{Applications}\label{Applications}

In this section we present some applications of the main theorem, including a topological version given by a spectrum level splitting.  We first present some applications to computations in $RO(C_2)$-graded cohomology.

\subsection{Computational Applications}
We begin with some examples that illustrate common computational techniques and demonstrate how the main theorem simplifies computations.  In the first example we will use the following fact, which implies we can compute the $p$-axis of the $RO(C_2)$-graded cohomology\footnote{The $p$-axis of the $RO(C_2)$-graded cohomology is the $\Z$-graded equivariant cohomology originally defined by Bredon in \cite{Br}.} of a space using the singular cohomology of the quotient.
\begin{lemma} \label{quotient lemma}
Let $X$ be a $C_2$-space.  Then $H^{p,0}(X) \cong H^p_{sing}(X/C_2)$.
\end{lemma}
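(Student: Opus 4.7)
My plan is to show that both sides define cohomology theories on pointed $C_2$-CW complexes and construct a natural transformation between them that is an isomorphism on the generating orbits $C_2/C_2$ and $C_2/e$; a cellular induction and five-lemma argument then extends the isomorphism to all $C_2$-CW complexes. The right-hand side is a cohomology theory because it is the $p$-axis of the full $RO(C_2)$-graded theory. The left-hand side is the composition of the orbit functor $(-)/C_2$ with ordinary singular cohomology with $\F_2$-coefficients; since $(-)/C_2$ is a left adjoint (to the ``trivial action'' functor from spaces to $C_2$-spaces), it preserves wedges and cofiber sequences, making $H^{p}_{sing}((-)/C_2;\F_2)$ a cohomology theory as well.

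To produce the natural transformation, I would first observe that for any $C_2$-space $Y$ with trivial action, $H^{p,0}(Y;\underline{\F_2}) \cong H^{p}_{sing}(Y;\F_2)$. This holds because maps from a trivial $C_2$-space into a genuine $C_2$-spectrum $E$ agree with non-equivariant maps into the genuine fixed-point spectrum $E^{C_2}$, and the homotopy groups of $(H\underline{\F_2})^{C_2}$ are concentrated in degree zero with value $\F_2$ (read off the $p$-axis of $\Mt$), so $(H\underline{\F_2})^{C_2} \simeq H\F_2$. Pulling back along the equivariant orbit projection $\pi: X \to X/C_2$ (with $X/C_2$ carrying the trivial action) and applying this identification yields the natural transformation
\[
H^{p}_{sing}(X/C_2;\F_2) \;\cong\; H^{p,0}(X/C_2;\underline{\F_2}) \xrightarrow{\pi^*} H^{p,0}(X;\underline{\F_2}).
\]

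It then remains to verify this map is an isomorphism on the two orbits. For $X = pt$, both sides are $\F_2$ concentrated at $p=0$. For $X = C_2$, the quotient is a point so the left side is $\F_2$ at $p=0$, while the right side is the $p$-axis of $\A_0 \cong \F_2[\tau,\tau^{-1}]$, which is also $\F_2$ concentrated at $p=0$. The main technical hurdle is justifying the genuine fixed-point identification $(H\underline{\F_2})^{C_2} \simeq H\F_2$, which requires some care with the equivariant stable framework. An alternative approach that avoids this machinery is to compare cellular cochain complexes directly: each equivariant $n$-cell of $X$ (of either isotropy type) contributes exactly a single $\F_2$ to $\tilde{H}^{n,0}(X_n/X_{n-1})$ via the suspension isomorphism, matching the contribution of the corresponding $n$-cell of $X/C_2$ to its singular cellular cochains, and one then checks that the differentials agree by inspecting the induced attaching maps on quotients.
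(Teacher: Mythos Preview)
Your proposal is correct but takes a genuinely different route from the paper. The paper gives a direct representability argument: using the Dold--Thom model $K(\underline{\F_2}(p,0)) \simeq \F_2\langle S^{p,0}\rangle$ from \cite{dS}, the key observation is that since $S^{p,0}$ has trivial $C_2$-action, so does the configuration space $\F_2\langle S^{p,0}\rangle$. The orbit/trivial-action adjunction at the \emph{space level} then immediately gives
\[
H^{p,0}(X) \cong [X_+,\F_2\langle S^{p,0}\rangle]_{C_2} \cong [X_+/C_2, \F_2\langle S^p\rangle]_e \cong H^p_{sing}(X/C_2),
\]
with no cellular induction needed. This is really an unstable, space-level version of your fixed-point identification $(H\underline{\F_2})^{C_2}\simeq H\F_2$: rather than computing the genuine fixed-point spectrum, the paper observes directly that each representing space in weight zero already has trivial action.

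Your approach---build a natural transformation, check on the two orbits, run the five lemma up the skeletal filtration---is the standard ``compare cohomology theories'' template, and indeed the paper uses exactly this pattern elsewhere (Lemmas~\ref{rho localization} and~\ref{Borel}). The trade-offs: the paper's argument is shorter, avoids stable machinery, and applies to an arbitrary $C_2$-space rather than only $C_2$-CW complexes (as the lemma is stated). Your argument is more modular and would generalize more readily to other constant Mackey functors or other groups, once the relevant fixed-point spectrum is identified. Your alternative cellular-cochain comparison would also work and is closest in spirit to how one often first sees this fact.
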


\begin{proof}
This follows from working with coefficients in a constant Mackey functor.  Recall from \cite{dS}, a model for the $(p,q)$-th Eilenberg--MacLane space is given by $K(\underline{\F_2}(p,q)) \simeq \F_2 \langle S^{p,q} \rangle$, the usual Dold--Thom model given by configurations of points on $S^{p,q}$ with labels in $\F_2$.  In particular, $K(\underline{\F_2}(p,0)) \simeq \F_2 \langle S^{p,0} \rangle$.
Since $S^{p,0}$ is fixed, so is the space $\F_2 \langle S^{p,0} \rangle$.  By adjunction
\begin{align*}
H^{p,0}(X) &\cong [X_+, K(\underline{\F_2}(p,0))]_{C_2}\\
&\cong [X_+, \F_2 \langle S^{p,0} \rangle]_{C_2}\\
&\cong [X_+/{C_2}, U(\F_2 \langle S^{p,0} \rangle)]_e\\
&\cong [X_+/{C_2}, \F_2 \langle S^{p} \rangle)]_e\\
&\cong H^p_{sing}(X/C_2),
\end{align*}
where the last isomorphism follows from the non-equivariant Dold--Thom model.
\end{proof}

\begin{example}\label{real proj example}
In this example we compute the cohomology of the projective space $\R P^2_{tw} = \mathbb{P}(\R^{3,1})$ using Lemma \ref{quotient lemma} and our main theorem.  A picture of $\R P^2_{tw}$ is shown in Figure \ref{real proj fig}.  This is the usual diagram for $\R P^2$ given by a disk with opposite points on the boundary identified.  The $C_2$-action is given by rotating the picture $180^\circ$ leaving a fixed point in the center and a fixed circle on the boundary.

\begin{figure}[h]
\begin{center}
\begin{tikzpicture}[scale=0.8]
\draw[thick, fixedgreen,-] (0,0) arc (-90:90:1cm);
\draw[thick,->] (1,1.1) -- (1,1.12);
\draw[thick, fixedgreen,-] (0,2) arc (90:270:1cm);
\draw[thick,->] (-1,0.98) -- (-1,0.96);
\draw[->] (-0.175,1) arc (-180:90:5pt);
\fill[fixedgreen] (0,0.01) circle(1.75pt);
\fill[fixedgreen] (0,2) circle(1.75pt);
\fill[fixedgreen] (0,1) circle(1.75pt);

\draw[transparent] (1,0) node {stuff};
\end{tikzpicture}
\end{center}
\caption{$\R P^2_{tw}$.}
\label{real proj fig}
\end{figure}

The long exact sequence associated to the cofiber sequence $S^{1,0} \hookrightarrow \R P^2_{tw} \to S^{2,2}$ is depicted on the left side of Figure \ref{differential real proj}.  Recall that in these depictions every lattice point inside the cones represents a copy of $\F_2$ and the differential increases topological dimension by one.

\begin{figure}[h]
\begin{center}\hfill
\begin{tikzpicture}[scale=0.6]
\draw[gray] (-3.5,0) -- (4.5,0) node[below, black] {\small $p$};
\draw[gray] (0,-4.5) -- (0,4.5) node[left, black] {\small $q$};
\foreach \x in {-3,...,-1,1,2,...,4}
	\draw [font=\tiny, gray] (\x cm,2pt) -- (\x cm,-2pt) node[anchor=north] {$\x$};
\foreach \y in {-4,...,-1,1,2,...,4}
	\draw [font=\tiny, gray] (2pt,\y cm) -- (-2pt,\y cm) node[anchor=east] {$\y$};

\draw[->,thick, black] (1,0) -- (1.95,0);
\draw (1.6,0) node[above] {\small $d$};

\draw[thick, red] (1,0) -- (4.5,3.5);
\draw[thick, red] (1,0) -- (1,4.5);
\draw[thick, red] (1,-2) -- (1,-4.5);
\draw[thick, red] (1,-2) -- (-1.5,-4.5);
\fill[red] (1,0) circle(2pt);

\draw[thick, blue] (2,2) -- (4.5,4.5);
\draw[thick, blue] (2,2) -- (2,4.5);
\draw[thick, blue] (2,0) -- (2,-4.5);
\draw[thick, blue] (2,0) -- (-2.5,-4.5);
\fill[blue] (2,2) circle(2pt);


\end{tikzpicture}\hfill
\begin{tikzpicture}[scale=0.6]
\draw[gray] (-3.5,0) -- (4.5,0) node[below, black] {\small $p$};
\draw[gray] (0,-4.5) -- (0,4.5) node[left, black] {\small $q$};
\foreach \x in {-3,...,-1,1,2,...,4}
	\draw [font=\tiny, gray] (\x cm,2pt) -- (\x cm,-2pt) node[anchor=north] {$\x$};
\foreach \y in {-4,...,-1,1,2,...,4}
	\draw [font=\tiny, gray] (2pt,\y cm) -- (-2pt,\y cm) node[anchor=east] {$\y$};

\draw[thick, red] (1,1) -- (2,2) -- (2,1) -- (4.5,3.5);
\draw[thick, red] (1,1) -- (1,4.5);
\draw[thick, red] (1,-2) -- (1,-4.5);
\draw[thick, red] (1,-2) -- (-1.5,-4.5);
\draw[red] (3.5,1.5) node {\small $\ker{d}$};

\draw[thick, blue] (2,2.1) -- (4.5,4.6);
\draw[thick, blue] (2,2.1) -- (2,4.5);
\draw[thick, blue] (2,-0.9) -- (2,-4.5);
\draw[thick, blue] (2,-0.9) -- (1,-1.9) -- (1,-0.9) -- (-2.6,-4.5);
\draw[blue] (3,-2) node {\small $\cok{d}$};

\draw[transparent] (5.5,0) node {RP2};
\end{tikzpicture}
\end{center}
\caption{Differential in a long exact sequence for $\tilde{H}^{*,*}(\R P^2_{tw}$).}\label{differential real proj}
\end{figure}
%
%
%
%

The only possible differential, determined by its image on the generator of the free module $\tilde{H}^{*,*}(S^{1,0}) \cong  \Sigma^{1,0}\Mt$, must be nonzero by Lemma \ref{quotient lemma}.  This is because the quotient $\R P^2_{tw}/C_2$ is the cone on $S^1$, which is contractible.
The $\Mt$-modules $\cok{d}$ and $\ker{d}$ resulting from this differential are depicted on the right side of Figure \ref{differential real proj}.   Even though we know this differential, computing $\tilde{H}^{*,*}(\R P^2_{tw})$ requires solving the extension problem in the short exact sequence
\[
0 \to \cok{d} \to \tilde{H}^{*,*}(\R P^2_{tw}) \to \ker{d} \to 0.
\]
One might hope for the short exact sequence to be split, but this turns out not to be the case.

From Theorem \ref{main thm}, the cohomology of $\R P^2_{tw}$ contains only shifted copies of $\Mt$ and $\A_n$.  Looking at the modules $\cok{d}$ and $\ker{d}$ we see a gap.  Along the $p$-axis the cohomology is trivial because the quotient $\R P^2_{tw}/C_2$ is contractible.  Since $\A_n$ has no gaps, the cohomology of $\R P^2_{tw}$ must consist only of copies of $\Mt$.
Finally by inspecting the rank in each bidegree, we see that $\tilde{H}^{*,*}(\R P^2_{tw}) \cong \Sigma^{1,1} \Mt \oplus \Sigma^{2,1} \Mt$.
\end{example}

In general, many $\Mt$-modules arise as kernels and cokernels of differentials in spectral sequence or long exact sequence computations.  One advantage of our main theorem is to resolve the associated extension problems.  In Example \ref{real proj example} we saw a differential between two copies of $\Mt$ leading to the $\Mt$-modules $\cok{d}$ and $\ker{d}$ depicted in Figure \ref{differential real proj} and discovered the result must consist only of copies of $\Mt$.
Here the generator of one copy of $\Mt$ hit an element of the lower cone of the other $\Mt$ and the result was two copies of $\Mt$ shifted vertically from their original position.  
Indeed, any differential from the generator of a copy of $\Mt$ to the lower cone of another copy gives rise to a similar result, as seen in \cite{K}.

We present a few more examples of differentials to give an idea of the range of possible $\Mt$-modules one might encounter in computations.  These examples also illustrate how our main theorem can be used.

\begin{example}
In Figure \ref{differential upper cone} we see a differential where the generator of one copy of $\Mt$ hits an element in the upper cone of another, rather than the lower cone as in Example \ref{real proj example}.  This differential $d$ is depicted on the left side of Figure \ref{differential upper cone} with $\cok{d}$ and $\ker{d}$ pictured on the right.
Every element here is $\tau$-torsion but both $\Mt$ and $\A_n$ have elements that are not $\tau$-torsion.  This means the usual extension problem cannot be solved with any number of copies of $\Mt$ or $\A_n$.  As a consequence of Theorem \ref{main thm}, a differential like this one is impossible when computing the cohomology of a finite $C_2$-CW complex.

\begin{figure}[h]
\begin{center} \hfill
\begin{tikzpicture}[scale=0.6]
\draw[gray] (-3.5,0) -- (4.5,0) node[below, black] {\small $p$};
\draw[gray] (0,-4.5) -- (0,4.5) node[left, black] {\small $q$};

\foreach \x in {-3,...,-1,1,2,...,4}
	\draw [font=\tiny, gray] (\x cm,2pt) -- (\x cm,-2pt) node[anchor=north] {$\x$};
\foreach \y in {-4,...,-1,1,2,...,4}
	\draw [font=\tiny, gray] (2pt,\y cm) -- (-2pt,\y cm) node[anchor=east] {$\y$};

\draw[->,thick] (1,2) -- (1.95,2);
\draw (1.6,2) node[above] {\small $d$};

\draw[thick, red] (1,2) -- (3.5,4.5);
\draw[thick, red] (1,2) -- (1,4.5);
\draw[thick, red] (1,0) -- (1,-4.5);
\draw[thick, red] (1,0) -- (-2.5,-3.5);
\fill[red] (1,2) circle(2pt);

\draw[thick, blue] (2,0) -- (4.5,2.5);
\draw[thick, blue] (2,0) -- (2,4.5);
\draw[thick, blue] (2,-2) -- (2,-4.5);
\draw[thick, blue] (2,-2) -- (-0.5,-4.5);
\fill[blue] (2,0) circle(2pt);

\end{tikzpicture} \hfill
\begin{tikzpicture}[scale=0.6]
\draw[gray] (-3.5,0) -- (4.5,0) node[below,black] {\small $p$};
\draw[gray] (0,-4.5) -- (0,4.5) node[left,black] {\small $q$};

\foreach \x in {-3,...,-1,1,2,...,4}
	\draw [font=\tiny, gray] (\x cm,2pt) -- (\x cm,-2pt) node[anchor=north] {$\x$};
\foreach \y in {-4,...,-1,1,2,...,4}
	\draw [font=\tiny, gray] (2pt,\y cm) -- (-2pt,\y cm) node[anchor=east] {$\y$};

\draw[thick, red] (-2.5,-3.5) -- (1,0) -- (1,-1) -- (-2.5,-4.5);
\draw[red] (1,-2.1) node {\small $\ker{d}$};

\draw[thick, blue] (4.5,3.5) -- (2,1) -- (2,0) -- (4.5,2.5);
\draw[blue] (3,3.1) node {\small $\cok{d}$};

\draw[transparent] (5.5,0) node {diff};
\end{tikzpicture}
\end{center}
\caption{Differential to upper cone of $\Mt$.}
\label{differential upper cone}
\end{figure}

\end{example}

\begin{warning}
It is in fact possible to have a nonzero differential into the upper cone when computing the cohomology of a space.  For example, one could have an isomorphism between two copies of $\Mt$.  One could also have the generator of one copy hit $\rho^n$ times the generator of the other copy.  Neither of these contradict our main theorem.  The next example demonstrates the latter type of differential.
\end{warning}

\begin{example}
Consider the space $X = S^{2,2} \cup I_{\text{triv}}$ where a line segment with the trivial action connects the north and south poles of $S^{2,2}$.  There is a cofiber sequence $S^{2,2} \hookrightarrow X \to S^{1,0}$ and the differential in the long exact sequence associated to this cofiber sequence is depicted in Figure \ref{second differential upper cone}.
This differential must be nonzero by Lemma \ref{rho localization} because the fixed-set of $X$ is contractible so $\rho^{-1}\tilde{H}^{*,*}(X) \equiv 0$.  Here the generator of one copy of $\Mt$ hits $\rho^2$ times the generator of the other copy. As a result of the main theorem, the associated extension problem in this example must be solved by $\Sigma^{1,0}\A_{1}$.

\begin{figure}[h]
\begin{center} \hfill
\begin{tikzpicture}[scale=0.6]
\draw[gray] (-3.5,0) -- (4.5,0) node[below, black] {\small $p$};
\draw[gray] (0,-4.5) -- (0,4.5) node[left, black] {\small $q$};

\foreach \x in {-3,...,-1,1,2,...,4}
	\draw [font=\tiny, gray] (\x cm,2pt) -- (\x cm,-2pt) node[anchor=north] {$\x$};
\foreach \y in {-4,...,-1,1,2,...,4}
	\draw [font=\tiny, gray] (2pt,\y cm) -- (-2pt,\y cm) node[anchor=east] {$\y$};

\draw[->,thick] (2,2) -- (2.95,2);
\draw (2.6,2) node[above] {\small $d$};

\draw[thick, red] (2,2) -- (4.5,4.5);
\draw[thick, red] (2,2) -- (2,4.5);
\draw[thick, red] (2,0) -- (2,-4.5);
\draw[thick, red] (2,0) -- (-2.5,-4.5);
\fill[red] (2,2) circle(2pt);

\draw[thick, blue] (1,0) -- (4.5,3.5);
\draw[thick, blue] (1,0) -- (1,4.5);
\draw[thick, blue] (1,-2) -- (1,-4.5);
\draw[thick, blue] (1,-2) -- (-1.5,-4.5);
\fill[blue] (1,0) circle(2pt);

\end{tikzpicture} \hfill
\begin{tikzpicture}[scale=0.6]
\draw[gray] (-3.5,0) -- (4.5,0) node[below, black] {\small $p$};
\draw[gray] (0,-4.5) -- (0,4.5) node[left, black] {\small $q$};

\foreach \x in {-3,...,-1,1,2,...,4}
	\draw [font=\tiny, gray] (\x cm,2pt) -- (\x cm,-2pt) node[anchor=north] {$\x$};
\foreach \y in {-4,...,-1,1,2,...,4}
	\draw [font=\tiny, gray] (2pt,\y cm) -- (-2pt,\y cm) node[anchor=east] {$\y$};

\draw[thick, red] (1,-4.5) -- (1,-1) -- (2,0) -- (2,-4.5);
\draw[red] (3,-2.1) node {\small $\ker{d}$};

\draw[thick, blue] (1,4.5) -- (1,0) -- (2,1) -- (2,4.5);
\draw[blue] (3,3.1) node {\small $\cok{d}$};

\draw[transparent] (5.5,0) node {diff};
\end{tikzpicture}
\end{center}
\caption{Differential in a long exact sequence for $\tilde{H}^{*,*}(S^{2,2} \cup I_\text{triv})$.}
\label{second differential upper cone}
\end{figure}

\end{example}

\begin{example}
Another example of a differential $d$ that arises in computations is from the generator of a copy of $\Mt$ to a shifted copy of $\A_0$.
As a trivial example, one could compute $\tilde{H}^{*,*}(S^{2,1})$ via the cofiber sequence $S^{1,0} \hookrightarrow S^{2,1} \to {C_2}_+ \Smash S^2$ as depicted in Figure \ref{differential line}.
The differential here is nontrivial because the quotient $S^{2,1}/C_2$ is contractible.  Counting the rank in each bidegree, Theorem \ref{main thm} implies the extension problem in the associated short exact sequence
\[
0 \to \cok{d} \to \tilde{H}^{*,*}(S^{2,1}) \to \ker{d} \to 0
\]
must be solved by $\tilde{H}^{*,*}(S^{2,1}) \cong \Sigma^{2,1}\Mt$, which agrees with the suspension isomorphism.

\begin{figure}[h]
\begin{center}\hfill
\begin{tikzpicture}[scale=0.6]
\draw[gray] (-3.5,0) -- (4.5,0) node[below, black] {\small $p$};
\draw[gray] (0,-4.5) -- (0,4.5) node[left, black] {\small $q$};

\foreach \x in {-3,...,-1,1,2,...,4}
	\draw [font=\tiny, gray] (\x cm,2pt) -- (\x cm,-2pt) node[anchor=north] {$\x$};
\foreach \y in {-4,...,-1,1,2,...,4}
	\draw [font=\tiny, gray] (2pt,\y cm) -- (-2pt,\y cm) node[anchor=east] {$\y$};

\draw[->,thick] (1,0) -- (1.95,0);
\draw (1.6,0) node[above] {\small $d$};

\draw[thick, red] (1,0) -- (4.5,3.5);
\draw[thick, red] (1,0) -- (1,4.5);
\draw[thick, red] (1,-2) -- (1,-4.5);
\draw[thick, red] (1,-2) -- (-1.5,-4.5);
\fill[red] (1,0) circle(2pt);

\draw[thick, blue] (2,-4.5) -- (2,4.5);

\end{tikzpicture} \hfill
\begin{tikzpicture}[scale=0.6]
\draw[gray] (-3.5,0) -- (4.5,0) node[below, black] {\small $p$};
\draw[gray] (0,-4.5) -- (0,4.5) node[left, black] {\small $q$};

\foreach \x in {-3,...,-1,1,2,...,4}
	\draw [font=\tiny, gray] (\x cm,2pt) -- (\x cm,-2pt) node[anchor=north] {$\x$};
\foreach \y in {-4,...,-1,1,2,...,4}
	\draw [font=\tiny, gray] (2pt,\y cm) -- (-2pt,\y cm) node[anchor=east] {$\y$};

\draw[thick, red] (2,1) -- (4.5,3.5);
\draw[thick, red] (2,1) -- (2,4.5);
\draw[thick, red] (1,-2) -- (1,-4.5);
\draw[thick, red] (1,-2) -- (-1.5,-4.5);
\draw[thick, red] (3.5,1.5) node {\small $\ker{d}$};

\draw[thick, blue] (2,-1) -- (2,-4.5);
\draw[thick, blue] (3,-2) node {\small $\cok{d}$};

\draw[transparent] (5.5,0) node {diff};
\end{tikzpicture}
\end{center}
\caption{Differential in a long exact sequence for $\tilde{H}^{*,*}(S^{2,1})$.}
\label{differential line}
\end{figure}
\end{example}

\subsection{Structure theorem for homology}  We obtain a similar structure theorem for $RO(C_2)$-graded homology as an immediate consequence of our main theorem.  Since $\Mt$ is self-injective, we can define an $RO(C_2)$-graded homology theory via graded $\Mt$-module maps
\[
H_{a,b}(X) = \Hom_{\Mt}(H^{*,*}(X),\Sigma^{a,b} \Mt).
\]
One can check this homology theory agrees with the usual $RO(C_2)$-graded Bredon homology with $\underline{\F_2}$-coefficients on each orbit.
Let $\Mt^* = \Hom_{\Mt}(\Mt,\Sigma^{*,*} \Mt)$ denote the homology of a point and $\A_{n}^* = \Hom_{\Mt}(\A_{n}, \Sigma^{*,*} \Mt)$ denote the homology of $S^n_a$.  These $\Mt$-modules are depicted in Figure \ref{duals}, with $\Mt^*$ on the left and $\A_n^*$ on the right.
These are very similar to the depictions of $\Mt$ and $\A_n$, but now $\rho$ acts with bidegree $(-1,-1)$, $\tau$ acts with bidegree $(0,-1)$, $\theta$ acts with bidegree $(0,2)$, and so on.  The same depiction of $\Mt^*$ can be found in Figure 1 of \cite{DI}.

\begin{figure}[h]
\begin{center}\hfill
\begin{tikzpicture}[scale=0.6]
\draw[gray] (-4.5,0) -- (3.5,0) node[below, black] {\small $p$};
\draw[gray] (0,-4.5) -- (0,4.5) node[left, black] {\small $q$};
\foreach \x in {-4,...,-1,1,2,...,3}
	\draw [font=\tiny, gray] (\x cm,2pt) -- (\x cm,-2pt) node[anchor=north] {$\x$};
\foreach \y in {1,2,...,4}
	\draw [font=\tiny, gray] (2pt,\y cm) -- (-2pt,\y cm) node[anchor=east] {$\y$};
  \foreach \y in {-4,...,-1}
  	\draw [font=\tiny, gray] (2pt,\y cm) -- (-2pt,\y cm) node[anchor=west] {$\y$};

\foreach \y in {0,...,4}
	\fill (0,-\y) circle(2pt);
\foreach \y in {1,...,4}
	\fill (-1,-\y) circle(2pt);
\foreach \y in {2,...,4}
	\fill (-2,-\y) circle(2pt);
\foreach \y in {3,...,4}
	\fill (-3,-\y) circle(2pt);
\foreach \y in {4,...,4}
	\fill (-4,-\y) circle(2pt);

\foreach \y in {0,...,2}
	\fill (0,\y+2) circle(2pt);
\foreach \y in {1,...,2}
	\fill (1,\y+2) circle(2pt);
\foreach \y in {2,...,2}
	\fill (2,\y+2) circle(2pt);

\draw[thick,->] (0,0) -- (-4.5,-4.5);
\draw[thick,->] (0,0) -- (0,-4.5);
\draw[thick,->] (0,2) -- (0,4.5);
\draw[thick,->] (0,2) -- (2.5,4.5);


\draw[transparent] (2,5) node {$\vdots$};
\draw[transparent] (2,-4.6) node {$\vdots$};
\end{tikzpicture} \hfill
\begin{tikzpicture}[scale=0.6]
\draw[gray] (-1,0) -- (5.5,0) node[below, black] {\small $p$};
\draw[gray] (0,-4.3) -- (0,4.3) node[left, black] {\small $q$};
\draw [font=\small, gray] (-0.3,0) node[below] {$0$};
\draw [font=\small, gray] (4.3,0) node[below] {$n$};

\draw (2,5) node {$\vdots$};
\foreach \x in {0,...,4}
	\foreach \y in {-4,...,4}
		\fill (\x,\y) circle(2pt);

\foreach \x in {0,...,4}
	\draw[thick] (\x,-4.3) -- (\x,4.3);
\foreach \y in {-4,...,0}
	\draw[thick] (0,\y) -- (4,\y+4);

\draw[thick] (0,1) -- (3.3,4.3);
\draw[thick] (0,2) -- (2.3,4.3);
\draw[thick] (0,3) -- (1.3,4.3);
\draw[thick] (0,4) -- (0.3,4.3);

\draw[thick] (0.7,-4.3) -- (4,-1);
\draw[thick] (1.7,-4.3) -- (4,-2);
\draw[thick] (2.7,-4.3) -- (4,-3);
\draw[thick] (3.7,-4.3) -- (4,-4);
\draw[thick] (2,-4.6) node {$\vdots$};

\draw[transparent] (6,0) node {diff};
\end{tikzpicture}
\end{center}
\caption{$\Mt^{*} = H_{*,*}(pt;\underline{\F_2})$ and $\A_n^{*} = H_{*,*}(S^n_a;\underline{\F_2})$.}
\label{duals}
\end{figure}

Now from the main theorem we immediately obtain a decomposition of the homology of any finite $C_2$-CW complex as an $\Mt$-module.

\begin{cor}\label{homology} As an $\Mt$-module, there is a decomposition of the homology of any finite $C_2$-CW complex $X$ given by
\[
H_{*,*}(X) \cong (\oplus_i \Sigma^{p_i,q_i} \Mt^*) \oplus (\oplus_j \Sigma^{r_j,0} \A_{n_j}^*)
\]
where $(p_i,q_i)$ and $(r_j,0)$ correspond to elements of $RO(C_2)$ with $0 \leq q_i \leq p_i$ and $0 \leq r_j$.
\end{cor}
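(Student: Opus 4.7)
The plan is to derive the corollary as a direct algebraic dualization of the cohomology decomposition provided by Theorem \ref{main thm}. Since $H_{a,b}(X)$ is defined as $\Hom_{\Mt}(H^{*,*}(X),\Sigma^{a,b}\Mt)$, the statement reduces to applying the contravariant functor $\Hom_{\Mt}(-,\Sigma^{*,*}\Mt)$ to the known decomposition of $H^{*,*}(X)$ and identifying the resulting summands.

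The first step is to observe that the cohomology decomposition of Theorem \ref{main thm} involves only finitely many summands, so that $\Hom_{\Mt}(-,\Sigma^{*,*}\Mt)$ distributes as a direct sum rather than a product. For the $\Mt$-summands, this follows because the bidegrees $(p_i, q_i)$ are confined to the triangular region $0 \leq q_i \leq p_i \leq m$ of Corollary \ref{bidegree corollary}, and each such bidegree of $H^{*,*}(X)$ is a finite-dimensional $\F_2$-vector space by induction on the cell structure. For the $\A_{n_j}$-summands, the proof of Theorem \ref{main thm} already realizes them inside $\tau^{-1}H^{*,*}(X)$, which is finitely generated over the graded PID $\tau^{-1}\Mt \cong \F_2[\tau,\tau^{-1},\rho]$, bounding their number as well.

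With finiteness in hand, I would distribute the $\Hom$ over the decomposition and identify each summand using the elementary identity
\[
\Hom_{\Mt}(\Sigma^{p,q}M,\Sigma^{a,b}\Mt) \cong \Hom_{\Mt}(M,\Sigma^{a-p,b-q}\Mt),
\]
which expresses a degree-$(a,b)$ dual of $\Sigma^{p,q}M$ as a shifted dual of $M$. Applied to $M = \Mt$ and $M = \A_n$, this yields $\Hom_{\Mt}(\Sigma^{p_i,q_i}\Mt,\Sigma^{*,*}\Mt) \cong \Sigma^{p_i,q_i}\Mt^*$ and $\Hom_{\Mt}(\Sigma^{r_j,0}\A_{n_j},\Sigma^{*,*}\Mt) \cong \Sigma^{r_j,0}\A_{n_j}^*$ directly from the definitions of $\Mt^*$ and $\A_n^*$. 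The only substantive point is the finiteness argument that converts the $\Hom$ of a direct sum into a direct sum of $\Hom$'s; the remaining steps are formal bookkeeping of graded $\Hom$-shifts, so I do not anticipate any genuine obstacle beyond that.
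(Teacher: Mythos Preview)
Your proposal is correct and matches the paper's approach: the corollary is stated as an immediate consequence of Theorem~\ref{main thm} via the definition $H_{a,b}(X)=\Hom_{\Mt}(H^{*,*}(X),\Sigma^{a,b}\Mt)$, and you have simply spelled out the routine dualization (including the finiteness needed to distribute $\Hom$ over the direct sum) that the paper leaves implicit.
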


\subsection{Spectrum level splitting} We further extend this result to a topological version of the structure theorem.   We show we can lift the decomposition of $RO(C_2)$-graded homology from Corollary \ref{homology} to a splitting at the spectrum level.  In particular, we will show for any based finite $C_2$-CW complex $X$ that we have a decomposition of the genuine $C_2$-spectrum $\Sigma^{\infty}X \Smash H\underline{\F_2}$ given by
\[
\Sigma^{\infty} X \Smash H\underline{\F_2} \simeq \pars{\bigvee_i S^{p_i,q_i} \Smash H\underline{\F_2}} \vee \pars{\bigvee_j S^{r_j,0} \Smash {S^{n_j}_a}_+ \Smash H\underline{\F_2}}.
\]
Here each wedge summand corresponds to the appropriate direct summand in $\tilde{H}_{*,*}(X)$.
Recall that we often use $X$ and $\Sigma^{\infty}X$ interchangeably, so for example, in this setting ${S^{n_j}_a}_+$ means $\Sigma^{\infty}{S^{n_j}_a}_+$.  

Lifting the direct sum decomposition to a spectrum level statement relies primarily on the fact that for genuine $C_2$-spectra, bigraded homotopy groups detect weak equivalences.\footnote{One can show this using the cofiber sequence ${C_2}_+ \to S^{0,0} \to S^{1,1}$ and extending to the Puppe sequence, then applying the five lemma.}  In order to prove the spectrum level statement we will begin with a smaller case.

\begin{lemma}\label{An homotopy}
Let $X$ be a $C_2$-CW spectrum with bigraded homology given by $\pi_{*,*}(X \Smash H\underline{\F_2}) \cong \A_n^*$.  Then $X \Smash H\underline{\F_2} \simeq {S^n_a}_+ \Smash H\underline{\F_2}$.
\end{lemma}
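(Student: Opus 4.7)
The plan is to produce a map of genuine $C_2$-spectra $f: {S^n_a}_+ \Smash H\underline{\F_2} \to X \Smash H\underline{\F_2}$ that induces an isomorphism on bigraded homotopy, and then invoke the footnote's principle that $\pi_{*,*}$ detects weak equivalences.

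By the free-forget adjunction for $H\underline{\F_2}$-modules, specifying $f$ amounts to producing a map $\bar f: {S^n_a}_+ \to X \Smash H\underline{\F_2}$ of $C_2$-spectra, i.e.\ a class in $[{S^n_a}_+, X \Smash H\underline{\F_2}]_{C_2}$. The cofiber sequence ${S^n_a}_+ \hookrightarrow (D^{n+1}_a)_+ \simeq S^{0,0} \to S^{n+1, n+1}$, whose connecting map is $\rho^{n+1}$, induces a long exact sequence
\[
\pi_{n+1, n+1}(X \Smash H\underline{\F_2}) \to \pi_{0, 0}(X \Smash H\underline{\F_2}) \to [{S^n_a}_+, X \Smash H\underline{\F_2}]_{C_2} \to \pi_{n, n+1}(X \Smash H\underline{\F_2}) \to \pi_{-1, -1}(X \Smash H\underline{\F_2}).
\]
The hypothesis $\pi_{*,*}(X \Smash H\underline{\F_2}) \cong \A_n^*$ ensures the outer terms vanish (lying outside the strip $0 \leq p \leq n$ where $\A_n^*$ is supported), while $\pi_{0, 0}$ and $\pi_{n, n+1}$ each contain a nonzero generator. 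This allows the selection of a class $\bar f$ realizing these generators simultaneously.

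To verify $f$ is a $\pi_{*,*}$-isomorphism, I would appeal to $\Mt^*$-linearity together with the structure of $\A_n^*$: since $\A_n^*$ is a strip of width $n+1$ in which the $\Mt^*$-action propagates between bidegrees via $\tau$, $\tau^{-1}$, and $\rho$, matching the two generators at $(0, 0)$ and $(n, n+1)$ forces iso on the remaining bidegrees throughout the strip. Equivalently, one may realize $X \Smash H\underline{\F_2}$ as the fiber of the same map $\rho^{n+1}: H\underline{\F_2} \to \Sigma^{n+1, n+1} H\underline{\F_2}$ that exhibits ${S^n_a}_+ \Smash H\underline{\F_2}$, using that $\rho^{n+1}$ acts as zero on $\A_n^* \cong (\F_2[\tau, \tau^{-1}, \rho]/(\rho^{n+1}))^\vee$ and invoking detection to see the relevant composite is null.

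The main obstacle is ensuring that the chosen $\bar f$ realizes the intended generators in a manner compatible with the $\Mt^*$-action on both sides; a naive choice from the LES may differ by an element of $\pi_{0,0}(X \Smash H\underline{\F_2})$ that must be eliminated. Rigidifying the choice will likely require comparing the cofiber sequences for ${S^n_a}_+ \Smash H\underline{\F_2}$ and $X \Smash H\underline{\F_2}$ directly, observing that the relevant connecting maps both land in one-dimensional homotopy groups of $\Mt^*$ and therefore must agree. Once $f$ is shown to be a $\pi_{*,*}$-iso, the footnote's detection principle yields the claimed weak equivalence.
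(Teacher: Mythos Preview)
Your approach is essentially the same as the paper's: both construct the desired map from the Puppe sequence of $\rho^{n+1}\colon S^{0,0}\to S^{n+1,n+1}$ and then check it is a $\pi_{*,*}$-isomorphism. The paper's execution is a bit more direct. Rather than setting up the full long exact sequence for $[{S^n_a}_+,\,-\,]$, it simply picks the nonzero class $x\in\pi_{n,n+1}(X\Smash H\underline{\F_2})$, observes $\rho^{n+1}x=0$, and factors the corresponding map $S^{n+1,n+1}\to S^{1,0}\Smash X\Smash H\underline{\F_2}$ through the cofiber $S^{1,0}\Smash{S^n_a}_+$. After extending along the unit to an $H\underline{\F_2}$-module map $\bar f$, the paper finishes in one line: $\bar f$ is $\Mt$-linear on $\pi_{*,*}$ and nonzero in a single bidegree, hence an isomorphism in all bidegrees.

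Two small points on your write-up. First, the last group in your long exact sequence should be $\pi_{-1,0}$ rather than $\pi_{-1,-1}$ (the connecting map is $\rho^{n+1}$, which on homotopy lowers bidegree by $(n+1,n+1)$); this does not affect the argument since both vanish. Second, your worry about the $\pi_{0,0}$-indeterminacy is unnecessary. The cleanest way to see this is the paper's implicit observation that $\End_{\Mt}(\A_n^*)$ in degree $(0,0)$ is just $\F_2$: commuting with the invertible $\tau$-action forces an endomorphism to be constant along each column, and commuting with $\rho$ forces all columns to agree. Thus any $\Mt$-linear endomorphism of $\A_n^*$ is either zero or the identity, so as soon as your $\bar f$ hits the nonzero class in $\pi_{n,n+1}$ you are done, with no need to ``match two generators'' or adjust the lift.
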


\begin{proof}
We aim to construct a map ${S^n_a}_+ \Smash H\underline{\F_2} \to X \Smash H\underline{\F_2}$ that induces an isomorphism on bigraded homotopy.  As above, such a map will be sufficient because we are working with genuine $C_2$-spectra and in that setting $RO(C_2)$-graded homotopy detects weak equivalences.

Since $\pi_{*,*}(X \Smash H\underline{\F_2}) \cong \A_n^*$, there is a class $x \in \pi_{n,n+1}(X \Smash H\underline{\F_2})$ with $\rho^{n+1} x = 0$ but $\rho^n x \neq 0$.
Consider $x$ as a map
\[
x: S^{n+1,n+1} \to S^{1,0} \Smash X \Smash H\underline{\F_2}.
\]
Since $\rho^{n+1}x=0$, the map $x$ factors through the cofiber of $\rho^{n+1}$ as in the diagram below.
\begin{center}
\begin{tikzcd}
S^{0,0} \arrow[r,"\rho^{n+1}"] & S^{n+1,n+1} \arrow[r,"x"] \arrow[d] & S^{1,0} \Smash X \Smash H\underline{\F_2}\\
& S^{1,0} \Smash {S^n_a}_+ \arrow[ur,dashed,"f"]
\end{tikzcd}
\end{center}
Now the induced map $f$ factors as the unit map followed by an $H\underline{\F_2}$-module map $\bar{f}$ as in the following diagram.
\begin{center}
\begin{tikzcd}
S^{0,0} \arrow[r,"\rho^{n+1}"] & S^{n+1,n+1} \arrow[r,"x"] \arrow[d] & S^{1,0} \Smash X \Smash H\underline{\F_2}\\
& S^{1,0} \Smash {S^n_a}_+ \arrow[ur,dashed,"f"] \arrow[r,"\eta"] & S^{1,0} \Smash {S^n_a}_+ \Smash H\underline{\F_2} \arrow[u,"\bar{f}"]
\end{tikzcd}
\end{center}
Since $\bar{f}$ is a map of $H\underline{\F_2}$-modules inducing an isomorphism on homotopy in bidegree $(n+1,n+1)$, it must induce an isomorphism in all bidegrees.  Thus $\bar{f}$ is a weak equivalence, which completes the proof.
\end{proof}

\begin{remark}
The weight of the class $x$ in the homotopy of $X \Smash H\underline{\F_2}$ was not actually important for the proof.  A similar argument shows that
\[
S^{p,0} \Smash {S^n_a}_+ \Smash H\underline{\F_2} \simeq S^{p,q} \Smash {S^n_a}_+ \Smash H\underline{\F_2}
\]
for any $q$.  Notice, however, it is not the case in general that
\[
S^{p,0} \Smash {S^n_a}_+ \simeq S^{p,q} \Smash {S^n_a}_+.
\]
For example, $S^{1,0} \Smash {S^1_a}_+$ is not equivalent to $S^{1,1} \Smash {S^1_a}_+$.  The lack of equivalence here can be detected by smashing with $H\underline{\Z}$.
\end{remark}

We will use Lemma \ref{An homotopy} to prove the following theorem.

\begin{thm}\label{spectra}
Let $X$ be a based finite $C_2$-CW complex.  Then there is a weak equivalence of genuine $C_2$-spectra
\[
\Sigma^{\infty} X \Smash H\underline{\F_2} \simeq \pars{\bigvee_i S^{p_i,q_i} \Smash H\underline{\F_2}} \vee \pars{\bigvee_j S^{r_j,0} \Smash {S^{n_j}_a}_+ \Smash H\underline{\F_2}}
\]
where $(p_i,q_i)$ and $(r_j,0)$ correspond to elements of $RO(C_2)$ with $0 \leq q_i \leq p_i$ and $0 \leq r_j$.
\end{thm}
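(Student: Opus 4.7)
The plan is to combine the algebraic splitting from Corollary \ref{homology} with the construction used in the proof of Lemma \ref{An homotopy}: build explicit $H\underline{\F_2}$-module maps realizing each algebraic summand, wedge them together, and verify the resulting map induces an isomorphism on bigraded homotopy. Since bigraded homotopy detects weak equivalences of genuine $C_2$-spectra, this is enough to conclude a weak equivalence.

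First, I would apply Corollary \ref{homology} to obtain the decomposition
\[
\pi_{*,*}(\Sigma^\infty X \Smash H\underline{\F_2}) \cong \bigoplus_i \Sigma^{p_i,q_i}\Mt^* \oplus \bigoplus_j \Sigma^{r_j,0}\A_{n_j}^*.
\]
For each $\Mt^*$ summand, pick a class $u_i \in \pi_{p_i,q_i}(\Sigma^\infty X \Smash H\underline{\F_2})$ generating it, regard it as a map $S^{p_i,q_i} \to \Sigma^\infty X \Smash H\underline{\F_2}$, and extend along the unit of $H\underline{\F_2}$ using the $H\underline{\F_2}$-module structure on the target to produce an $H\underline{\F_2}$-module map
\[
\alpha_i : S^{p_i,q_i} \Smash H\underline{\F_2} \to \Sigma^\infty X \Smash H\underline{\F_2}.
\]
For each $\Sigma^{r_j,0}\A_{n_j}^*$ summand, I would imitate the construction from the proof of Lemma \ref{An homotopy}: choose a class $v_j$ in the corner bidegree (annihilated by $\rho^{n_j+1}$ but not by $\rho^{n_j}$) lifting the generator of that summand, factor the corresponding map through the cofiber of multiplication by $\rho^{n_j+1}$ (which, as in the lemma, is identified up to suspension with $S^{r_j,0}\Smash{S^{n_j}_a}_+$), and extend along the unit of $H\underline{\F_2}$ to an $H\underline{\F_2}$-module map
\[
\beta_j : S^{r_j,0} \Smash {S^{n_j}_a}_+ \Smash H\underline{\F_2} \to \Sigma^\infty X \Smash H\underline{\F_2}.
\]

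Wedging the $\alpha_i$ and $\beta_j$ together produces an $H\underline{\F_2}$-module map
\[
\Phi : \pars{\bigvee_i S^{p_i,q_i}\Smash H\underline{\F_2}} \vee \pars{\bigvee_j S^{r_j,0}\Smash {S^{n_j}_a}_+ \Smash H\underline{\F_2}} \to \Sigma^\infty X \Smash H\underline{\F_2}.
\]
By construction, $\alpha_i$ maps the $\Sigma^{p_i,q_i}\Mt^*$ summand of the source isomorphically onto the corresponding summand of the target, and by exactly the argument at the end of the proof of Lemma \ref{An homotopy} each $\beta_j$ is a weak equivalence onto the $\Sigma^{r_j,0}\A_{n_j}^*$ summand; together these show $\Phi_*$ is a direct sum of isomorphisms on bigraded homotopy, so $\Phi$ itself is a weak equivalence. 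The one genuinely nontrivial input is the cofiber identification used to construct the $\beta_j$, which is precisely what Lemma \ref{An homotopy} supplies. The rest is a direct transcription of the algebraic splitting into the category of $H\underline{\F_2}$-modules, together with the standard fact that $RO(C_2)$-graded homotopy detects weak equivalences of genuine $C_2$-spectra.
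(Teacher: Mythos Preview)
Your proposal is correct and follows essentially the same approach as the paper: apply Corollary \ref{homology} to split the homology, realize each free summand by extending a generator along the $H\underline{\F_2}$-module structure, realize each antipodal summand by factoring an appropriate $\rho$-torsion class through the cofiber of $\rho^{n_j+1}$ exactly as in Lemma \ref{An homotopy}, wedge the maps together, and invoke that bigraded homotopy detects weak equivalences. The only small imprecision is the phrase ``$\Phi_*$ is a direct sum of isomorphisms'': a priori the $\beta_j$ need not land exactly in a single summand, so what you really have is that projecting to the chosen summand gives an isomorphism, whence the matrix of $\Phi_*$ has invertible diagonal blocks and (over a finite direct sum) is therefore an isomorphism---but the paper glosses over this point in exactly the same way.
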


\begin{proof}
As in Lemma \ref{An homotopy} the goal is to construct a map
\[
\pars{\bigvee_i S^{p_i,q_i} \Smash H\underline{\F_2}} \vee \pars{\bigvee_j S^{r_j,0} \Smash {S^{n_j}_a}_+ \Smash H\underline{\F_2}} \to \Sigma^{\infty} X \Smash H\underline{\F_2}
\]
that induces an isomorphism on bigraded homotopy.  Again such a map is sufficient because for genuine $C_2$-spectra the $RO(C_2)$-graded homotopy detects weak equivalences.

By Corollary \ref{homology}, we may decompose $\pi_{*,*}(\Sigma^{\infty} X \Smash H\underline{\F_2}) \cong \tilde{H}_{*,*}(X; \underline{\F_2})$ as
\[
\tilde{H}_{*,*}(X; \underline{\F_2}) \cong (\oplus_i \Sigma^{p_i,q_i}\Mt^*) \oplus (\oplus_j \Sigma^{r_j,0} \A_{n_j}^*).
\]
For each $i$, take a class $u_i \in \pi_{p_i,q_i}(\Sigma^{\infty} X \Smash H\underline{\F_2})$ corresponding to the generator of the $i$th shifted copy of $\Mt^*$ in this decomposition.  Then for each $j$, take a class $x_j \in \pi_{n_j + r_j,n_j+1}(\Sigma^{\infty} X \Smash H\underline{\F_2})$, such that $\rho^{n_j+1} x_j = 0$ but $\rho^{n_j} x_j \neq 0$.
The choice of $x_j$ corresponds to the choice of $x$ in Lemma \ref{An homotopy} for each shifted copy of $\A_{n_j}^*$.  Then, just as in Lemma \ref{An homotopy}, each $x_j$ gives rise to a map $\bar{f_j}$ as shown in the diagram below.
\begin{center}
\begin{tikzcd}
S^{0,0} \arrow[r,"\rho^{n_j+1}"] & S^{n_j+1,n_j+1} \arrow[r,"x"] \arrow[d] & S^{1-r_j,0} \Smash \Sigma^{\infty} X \Smash H\underline{\F_2}\\
& S^{1,0} \Smash {S^{n_j}_a}_+ \arrow[ur,dashed,"f_j"] \arrow[r,"\eta"] & S^{1,0} \Smash {S^{n_j}_a}_+ \Smash H\underline{\F_2} \arrow[u,"\bar{f_j}"]
\end{tikzcd}
\end{center}
Now take the maps the $u_i$ and $\bar{f_j}$ all together to form the map
\[
\pars{\bigvee_i S^{p_i,q_i} \Smash H\underline{\F_2}} \vee \pars{\bigvee_j S^{r_j,0} \Smash {S^{n_j}_a}_+ \Smash H\underline{\F_2}} \xrightarrow{\pars{\bigvee_i u_i\,} \vee\, \pars{\bigvee_j \bar{f_j}}} \Sigma^{\infty} X \Smash H\underline{\F_2}.
\]
By construction, this map induces an isomorphism on bigraded homotopy and so is a weak equivalence.
\end{proof}

\begin{remark}
We have two immediate consequences of Theorem \ref{spectra}.  The first is that the structure theorems for homology and cohomology hold Mackey functor valued.\footnote{As presented here $H^{p,q}(X;\underline{\F_2})$ is an abelian group but one can also define Mackey functor valued $RO(C_2)$-graded cohomology with $\underline{\F_2}$-coefficients.
The value at $C_2/C_2$ is given by $H^{p,q}(X;\underline{\F_2})$ and the value at $C_2/e$ is given by $H^{p,q}(C_2 \times X;\underline{\F_2})$.  The maps in the diagram for the Mackey functor are induced by the projection $C_2 \times X \to X$.  Since the structure theorems lift to the spectrum level, they hold for Mackey functor valued homology and cohomology, as modules over the Mackey functor valued cohomology of a point $\underline{\Mt}$.} Furthermore, if $Y$ is a finite $C_2$-CW spectrum, it is a desuspension of the suspension spectrum of a based finite $C_2$-CW complex.
Thus we obtain a similar decomposition of $Y \Smash H\underline{\F_2}$ as a wedge sum
\[
Y \Smash H\underline{\F_2} \simeq \pars{\bigvee_i S^{p_i,q_i} \Smash H\underline{\F_2}} \vee \pars{\bigvee_j S^{r_j,0} \Smash {S^{n_j}_a}_+ \Smash H\underline{\F_2}}
\]
where now the shifts may correspond to virtual representations.
\end{remark}

\subsection{Borel cohomology} Another application of the main theorem can be observed via Borel equivariant cohomology.  For a $C_2$-space $X$, by $H^*_{Bor}(X)$ we mean the Borel equivariant cohomology of $X$ with $\F_2$-coefficients.  This can be computed as $H^*_{sing}(X \times_{C_2} EC_2) \cong H^*_{sing}(X \times_{C_2} S^{\infty}_a)$ using $S^{\infty}_a$ as a model for $EC_2$.

Using the fiber bundle map $X \times_{C_2} EC_2 \to BC_2$, $H^*_{Bor}(X)$ is a module over the graded $\F_2$-algebra $H^*_{sing}(BC_2) = H^*_{sing}(\R P^{\infty}) \cong \F_2[x]$, where $x$ has degree $1$.  If $X$ is a finite $C_2$-CW complex, $H^*_{Bor}(X)$ is a finitely generated module over a graded PID and there is a decomposition
\[
H^*_{Bor}(X) \cong \left(\oplus_k \Sigma^{a_k} \F_2[x] \right) \oplus \left( \oplus_\ell \Sigma^{b_\ell} \F_2[x]/(x^{n_\ell}) \right)
\]
as a graded $\F_2[x]$-module.

The following result relating the $\tau$-localization of the $RO(C_2)$-graded cohomology with Borel cohomology is well known.
\begin{lemma}\label{Borel}
For any finite $C_2$-CW complex $X$, identifying $x$ with $\tau^{-1}\rho$, we have the following isomorphism of $\F_2[x]$-modules
\[
(\tau^{-1}H)^{*,0}(X) \cong H^*_{Bor}(X).
\]
\end{lemma}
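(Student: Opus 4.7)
The plan is to construct a natural $\F_2[x]$-module map
\[
\phi_X : (\tau^{-1}H)^{*,0}(X) \longrightarrow H^*_{Bor}(X),
\]
and promote it to an isomorphism by checking it on orbits and invoking a standard cohomology-theory induction. The key intermediate observation is that $\tau$ acts invertibly on $H^{*,*}(Y)$ whenever $Y$ is a free $C_2$-CW complex. This follows by induction on the cell structure: the cohomology of each free orbit $H^{*,*}({C_2}_+ \Smash S^n) \cong \Sigma^{n,0} \A_0 \cong \F_2[\tau,\tau^{-1}]$ has $\tau$ already invertible, and the 5-lemma applied to the long exact sequences of Section \ref{Computational tools} propagates this through the cellular filtration.

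To build $\phi_X$, take $EC_2 = S^{\infty}_a$. The projection $X \times EC_2 \to X$ induces $H^{*,*}(X) \to H^{*,*}(X \times EC_2)$. Because $X \times EC_2$ is $C_2$-free, $\tau$ acts invertibly on the target, so this map factors uniquely through $\tau^{-1}H^{*,*}(X)$. Restricting to the $q=0$ line and applying Lemma \ref{quotient lemma} to the free $C_2$-space $X \times EC_2$ produces
\[
\phi_X : (\tau^{-1}H)^{*,0}(X) \to H^{*,0}(X \times EC_2) \cong H^*_{sing}(X \times_{C_2} EC_2) = H^*_{Bor}(X).
\]
Chasing the class $\tau^{-1}\rho \in \tau^{-1}\Mt$ through the construction, and identifying its image with the pullback of $x \in H^1_{sing}(BC_2)$ along $X \times_{C_2} EC_2 \to BC_2$, shows $\phi_X$ is $\F_2[x]$-linear.

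Next, I would verify $\phi_X$ is an isomorphism on orbits. For $X = pt$, the left side is the $q = 0$ slice of $\tau^{-1}\Mt \cong \F_2[\tau,\tau^{-1},\rho]$, which has $\F_2$-basis $\{x^n = (\tau^{-1}\rho)^n\}_{n \geq 0}$ and matches $H^*_{Bor}(pt) = H^*_{sing}(BC_2) = \F_2[x]$. For $X = C_2$, both sides are $\F_2$ concentrated in degree $0$: on the left, $\tau^{-1}\A_0 = \A_0 = \F_2[\tau,\tau^{-1}]$ restricted to $q=0$ is $\F_2$; on the right, $H^*_{Bor}(C_2) = H^*_{sing}(EC_2) = \F_2$. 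Since both $(\tau^{-1}H)^{*,0}(-)$ and $H^*_{Bor}(-)$ are $\Z$-graded $C_2$-equivariant cohomology theories on finite $C_2$-CW complexes (the former by exactness of $\tau$-localization applied to the long exact sequences of Section \ref{Computational tools} in each fixed weight; the latter because it is representable), and $\phi$ is a natural transformation of cohomology theories that is an isomorphism on orbits, the standard 5-lemma induction on the cellular filtration using the cofiber sequences $X_{k-1} \hookrightarrow X_k \to X_k/X_{k-1}$ gives the isomorphism for all finite $C_2$-CW complexes $X$.

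The main obstacle is the implicit use of the infinite-dimensional space $EC_2$: to make ``$\tau$ acts invertibly on $H^{*,*}(X \times EC_2)$'' rigorous, one either filters $EC_2$ as the colimit of the $S^{2n+1}_a$ and argues that the Milnor $\lim^1$ terms vanish (since the transition maps in the system are surjective on $\tau$-localized cohomology), or works directly at the spectrum level with $\tau^{-1}H\underline{\F_2}$ and identifies it with $F((EC_2)_+, H\underline{\F_2})$. Everything else reduces to a clean comparison of two cohomology theories that agree on the two orbits.
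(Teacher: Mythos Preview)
Your approach is essentially the same as the paper's: factor the map induced by the projection $X \times EC_2 \to X$ through the $\tau$-localization using freeness of $X \times EC_2$, verify the resulting natural transformation is an isomorphism on the two orbits, and then restrict to weight $0$ and invoke Lemma \ref{quotient lemma}. You are in fact slightly more careful than the paper in flagging the infinite-dimensionality of $EC_2$ as a point requiring justification.
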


\begin{proof}
The proof involves showing $H^{*,*}((-) \times EC_2)$ and $\tau^{-1}H^{*,*}(-)$ are isomorphic cohomology theories for finite $C_2$-CW complexes, and then restricting the isomorphism.
Let $X$ be a finite $C_2$-CW complex and consider the equivariant projection
$\pi: X \times EC_2 \to X$.  Since $X \times EC_2$ has a free $C_2$ action it can be built with cells of the form $C_2 \times D^n$ making $H^{*,*}(X \times EC_2)$ an $\F_2[\tau,\tau^{-1}]$-module.  Thus the map $\pi^*$ on cohomology factors through $\tau^{-1}H^{*,*}(X)$ as in the diagram below.
\begin{center}
\begin{tikzcd}
H^{*,*}(X) \arrow[rr, "\pi^*"] \arrow[dr] & & H^{*,*}(X \times EC_2) \\
& \tau^{-1}H^{*,*}(X) \arrow[swap, dashed, ur, "\varphi"] &
\end{tikzcd}
\end{center}
For finite $C_2$-CW complexes $\tau^{-1}H^{*,*}(-)$ is a cohomology theory since localization is exact.  Now since $(-) \times EC_2$ preserves Puppe sequences, $H^{*,*}((-) \times EC_2)$ is also a cohomology theory.  It is easy to verify the map $\varphi$ is an isomorphism for both orbits since $pt \times EC_2=S^{\infty}_a$ and $C_2 \times EC_2 \simeq C_2$.
This shows $H^{*,*}((-) \times EC_2)$ and $\tau^{-1}H^{*,*}(-)$ are isomorphic cohomology theories for finite $C_2$-CW complexes.

We now restrict to the $p$-axis of each theory.  Identifying $x$ with $\tau^{-1}\rho$, the map $\varphi$ restricts to an isomorphism of $\F_2[x]$-modules as in the following diagram.
\begin{center}
\begin{tikzcd}
H^{*,0}(X \times EC_2) \arrow[r, hook] & H^{*,*}(X \times EC_2) \\
(\tau^{-1}H)^{*,0}(X) \arrow[u, dashed, "\cong"] \arrow[r, hook] & \tau^{-1}H^{*,*}(X) \arrow[swap, u, "\varphi"]
\end{tikzcd}
\end{center}
Finally, by Lemma \ref{quotient lemma} the cohomology on the $p$-axis is the cohomology of the quotient, so we have the isomorphism
\[
H^{*,0}(X \times EC_2) \cong H^*_{sing}(X \times_{C_2} EC_2) = H^*_{Bor}(X)
\]
and this completes the proof.
\end{proof}

Applying $\tau$-localization to our main theorem we have
\begin{align*}
\tau^{-1}H^{*,*}(X) &\cong \tau^{-1}\left((\oplus_i \Sigma^{p_i,q_i} \Mt) \oplus (\oplus_j \Sigma^{r_j,0} \A_{n_j}) \right)\\
&\cong \left(\oplus_i \Sigma^{p_i,0} \A_{\infty} \right) \oplus \left( \oplus_j  \Sigma^{r_j,0} \A_{n_j} \right).
\end{align*}
So identifying $x$ with $\tau^{-1} \rho$, on the $p$-axis we have
\[
(\tau^{-1}H)^{*,0}(X) \cong \left(\oplus_i \Sigma^{p_i} \F_2[x] \right) \oplus \left(\oplus_j \Sigma^{r_j} \F_2[x]/(x^{n_j+1})\right).
\]
Now compare this with Borel cohomology using Lemma \ref{Borel} and the decomposition of $H^*_{Bor}(X)$ as a $\F_2[x]$-module.  We see the torsion components of Borel cohomology correspond precisely to the shifted copies of $\A_{n_j}$ in $H^{*,*}(X)$, and the free components correspond to shifted copies of $\Mt$.  In particular, if we know the Borel cohomology of a finite $C_2$-CW complex, to compute $RO(C_2)$-graded cohomology we only need to determine the weight of each copy of $\Mt$. This is often nontrivial, though.


\appendix


\section{Injectivity}\label{Injectivity}

The purpose of this section is to prove that $\Mt$ is self-injective.  The proof will use a graded version of Baer's criterion (Proposition 9.3.6 in \cite{B}).  According to Baer's criterion, $\Mt$ is injective if and only if for every graded ideal $J \subseteq \Mt$, any map $f:\Sigma^{p,q} J \to \Mt$ extends to a map $\bar{f}: \Sigma^{p,q}\Mt \to \Mt$ as in the diagram below.
\begin{center}
\begin{tikzcd}
\Sigma^{p,q} J \arrow[r,"f"] \arrow[hook, d] & \Mt \\
\Sigma^{p,q} \Mt \arrow[dashed, ur, swap,"\bar{f}"] &
\end{tikzcd}
\end{center}
Equivalently, it suffices to show that for every map $f:\Sigma^{p,q} J \to \Mt$ there is an element $\lambda \in \Mt$ such that $f(x) = \lambda x$ for all $x \in J$.

In order to show that Baer's criterion is satisfied, we first need to investigate ideals of $\Mt$.  A few examples are shown in Figure \ref{ideals}.  The first ideal pictured is generated by three elements in the upper cone and contains the entire lower cone.  The second ideal is generated by 3 elements in the lower cone.  The last two ideals shown here are infinitely generated by elements in the lower cone.

\begin{figure}[ht]
\begin{center}
\begin{tikzpicture}[scale=0.25]
\draw[gray] (-5.5,0) -- (5.5,0);
\draw[gray] (0,-6.5) -- (0,5.5);

\draw[thick, gray] (0,0) -- (5.5,5.5);
\draw[thick, gray] (0,0) -- (0,5.5);
\draw[thick, gray] (0,-2) -- (0,-6.5);
\draw[thick, gray] (0,-2) -- (-4.5,-6.5);

\fill[lightgray] (0,5.5) -- (0,3) -- (2,5) -- (2,4) -- (3,5) -- (3,4) -- (4.5,5.5) -- cycle;
\fill[lightgray] (-4.5,-6.5) -- (0,-2) -- (0,-6.5) -- cycle;
\draw[thick] (0,5.5) -- (0,3) -- (2,5) -- (2,4) -- (3,5) -- (3,4) -- (4.5,5.5);
\draw[thick] (-4.5,-6.5) -- (0,-2) -- (0,-6.5);

\end{tikzpicture} \hfill
\begin{tikzpicture}[scale=0.25]
\draw[gray] (-5.5,0) -- (5.5,0);
\draw[gray] (0,-6.5) -- (0,5.5);

\draw[thick, gray] (0,0) -- (5.5,5.5);
\draw[thick, gray] (0,0) -- (0,5.5);
\draw[thick, gray] (0,-2) -- (0,-6.5);
\draw[thick, gray] (0,-2) -- (-4.5,-6.5);

\fill[lightgray] (-1,-6) -- (-1,-5) -- (-2,-6) -- (-2,-5) -- (-3,-6) -- (-3,-5) -- (0,-2) -- (0,-5) -- cycle;
\draw[thick] (-1,-6) -- (-1,-5) -- (-2,-6) -- (-2,-5) -- (-3,-6) -- (-3,-5) -- (0,-2) -- (0,-5) -- cycle;

\end{tikzpicture}\hfill
\begin{tikzpicture}[scale=0.25]
\draw[gray] (-5.5,0) -- (5.5,0);
\draw[gray] (0,-6.5) -- (0,5.5);

\draw[thick, gray] (0,0) -- (5.5,5.5);
\draw[thick, gray] (0,0) -- (0,5.5);
\draw[thick, gray] (0,-2) -- (0,-6.5);
\draw[thick, gray] (0,-2) -- (-4.5,-6.5);

\fill[lightgray] (-4.5,-6.5) -- (0,-2) -- (0,-6.5) -- cycle;
\draw[thick] (-4.5,-6.5) -- (0,-2) -- (0,-6.5);

\end{tikzpicture} \hfill
\begin{tikzpicture}[scale=0.25]
\draw[gray] (-5.5,0) -- (5.5,0);
\draw[gray] (0,-6.5) -- (0,5.5);

\draw[thick, gray] (0,0) -- (5.5,5.5);
\draw[thick, gray] (0,0) -- (0,5.5);
\draw[thick, gray] (0,-2) -- (0,-6.5);
\draw[thick, gray] (0,-2) -- (-4.5,-6.5);

\fill[lightgray] (-4.5,-6.5) -- (0,-2) -- (0,-5) -- (-1,-6) -- (-1,-5) -- (-2,-6) -- (-2,-5) -- (-3.5,-6.5) -- cycle;
\draw[thick] (-4.5,-6.5) -- (0,-2) -- (0,-5) -- (-1,-6) -- (-1,-5) -- (-2,-6) -- (-2,-5) -- (-3.5,-6.5);
\end{tikzpicture}
\end{center}
\caption{Some ideals in $\Mt$.}
\label{ideals}
\end{figure}

For our purposes it will be useful to classify graded ideals of $\Mt$ as one of two types.  We use the notation $J^+ = \Mt^+ \cap J$ and $J^- = \Mt^- \cap J$.

\begin{lemma}\label{ideal classification}
Every graded ideal $J \subseteq \Mt$ is one of the following two types:
\begin{enumerate}[I.]
\item $J$ is finitely generated by homogeneous elements $x_1, \dots, x_n$ with each $x_i \in \Mt^+$ and $J^- = \Mt^-$; or
\item $J^+ = 0$.
\end{enumerate}
\end{lemma}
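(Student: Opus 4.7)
The plan is to decompose any graded ideal $J$ via the $\F_2$-vector space splitting $\Mt = \Mt^+ \oplus \Mt^-$ (which is not a ring splitting but is compatible with the $\Mt^+$-module structure on $\Mt^-$), and to analyze $J^+$ and $J^-$ separately. First I would record two structural facts about $\Mt$ that do the heavy lifting. Every nonzero homogeneous element of $\Mt^+$ is a monomial $\rho^a\tau^b$, since each bidegree $(p,q)$ with $0\leq q\leq p$ contains the single monomial $\rho^{p-q}\tau^q$ and the ground field is $\F_2$. Secondly, multiplication by any positive-degree monomial $\rho^a\tau^b$ (with $a+b\geq 1$) is surjective onto $\Mt^-$: concretely $\rho^a\tau^b\cdot\frac{\theta}{\rho^{a+m}\tau^{b+n}}=\frac{\theta}{\rho^m\tau^n}$, so every lower-cone basis element lies in the image.

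Next I would split into cases based on $J^+$. If $J^+=0$ we are in type II and there is nothing more to do. Otherwise pick a nonzero homogeneous $x\in J^+$, which by the first observation is a monomial $\rho^a\tau^b$. If $(a,b)=(0,0)$ then $x=1$, so $J=\Mt$, and this is trivially type I with generator $1$ and $J^-=\Mt^-$. If instead $a+b\geq 1$, the second observation gives $x\cdot\Mt^-=\Mt^-$, so $\Mt^-\subseteq J$ and hence $J^-=\Mt^-$.

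It then remains to exhibit a finite $\Mt^+$-generating set for $J$. Since $\Mt^+\cong\F_2[\rho,\tau]$ is Noetherian, the graded ideal $J^+\subseteq\Mt^+$ is generated by finitely many homogeneous monomials $x_1,\dots,x_n$, at least one of which has positive degree by the case we are in. I would close the argument by verifying that these $x_i$ generate $J$ as an $\Mt$-ideal: any homogeneous $y\in J$ lies in either $J^+$ or $J^-$, and if $y\in J^+$ it lies in $(x_1,\dots,x_n)$ by construction, while if $y\in J^-=\Mt^-$ it lies in $x_i\cdot\Mt\subseteq(x_1,\dots,x_n)$ for any positive-degree $x_i$, again by the second observation.

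The substantive step is the second observation, since it is what forces the dichotomy $J^-=0$ or $J^-=\Mt^-$; the remaining work is bookkeeping. I expect this to be the main obstacle, but it is essentially immediate from the infinite divisibility of $\theta$ by both $\rho$ and $\tau$ in $\Mt$.
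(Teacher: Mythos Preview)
Your proposal is correct and follows essentially the same approach as the paper: both hinge on the observation that any nonzero monomial $\rho^m\tau^n\in J$ forces $\Mt^-\subseteq J$ via the infinite divisibility of $\theta$, and both then invoke Noetherianity of $\Mt^+\cong\F_2[\rho,\tau]$ for finite generation. Your write-up is slightly more detailed (explicitly verifying the $x_i$ generate all of $J$, not just $J^+$), but the argument is the same; one small cosmetic point is that your separate treatment of $(a,b)=(0,0)$ is unnecessary, since the divisibility computation $\rho^m\tau^n\cdot\frac{\theta}{\rho^{m+a}\tau^{n+b}}=\frac{\theta}{\rho^a\tau^b}$ works uniformly for $m,n\geq 0$.
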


\begin{proof}
Observe that if $J$ contains a nonzero homogeneous element of $\Mt^+$, i.e.\ there exists $x = \rho^m \tau^n \in J$ for some $m, n \geq 0$, then $\Mt^- \subseteq J$.  
For any $a,b \geq 0$, the element $\frac{\theta}{\rho^a \tau^b} \in J$ because
\[
\frac{\theta}{\rho^a \tau^b} = \frac{\theta}{\rho^{m+a} \tau^{n+b}} \cdot \rho^m \tau^n = \frac{\theta}{\rho^{m+a} \tau^{n+b}} \cdot x \in J.
\]
So indeed $\Mt^- \subseteq J$.  Since $\Mt^+ \cong \F_2[\rho, \tau]$ is a graded polynomial ring, any graded ideal of this form is finitely generated by some number of homogeneous elements in $\Mt^+$.  Thus $J$ is an ideal of type I.

Alternatively, if $J^+ = 0$, then $J$ only contains elements of $\Mt^-$.
\end{proof}

Returning to Figure \ref{ideals}, we see the first ideal pictured is type I and the rest are type II.  Notice an ideal of type II may be finitely generated or infinitely generated.

Next we will describe all possible nonzero maps $f:\Sigma^{p,q} J \to \Mt$.  We will see there are not really any interesting maps.  Any such $f$ is completely determined by $(p,q)$ the bidegree of the suspension.

\begin{lemma}\label{map classification}
Let $J \subseteq \Mt$ be a graded ideal and $f:\Sigma^{p,q} J \to \Mt$ be a nontrivial map.  Then exactly one of the following holds,
\begin{enumerate}[I.]
\item $J$ is type I and either
\begin{enumerate}[(i)]
\item $f(\rho^m \tau^n) = \rho^a \tau^b$ for some $m, n \geq 0$ and $a \geq m$, $b \geq n$; or
\item $f(\rho^m \tau^n) = \frac{\theta}{\rho^a \tau^b}$ for some $m, n \geq 0$ and $a,b \geq 0$.
\end{enumerate}
\item $J$ is type II and $f\big(\frac{\theta}{\rho^m \tau^n}\big) = \frac{\theta}{\rho^a \tau^b}$ for some $m, n \geq 0$ and $m \geq a \geq 0$,
$n \geq b \geq 0$.
\end{enumerate}
\end{lemma}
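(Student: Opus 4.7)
The plan is to exploit the fact that $\Mt$ is at most one-dimensional over $\F_2$ in each bidegree, so any homogeneous $\Mt$-module map is determined by which bidegree components it sends to which. The proof splits along the dichotomy of Lemma \ref{ideal classification}, and within each case examines what the relations in $\Mt$ force on the image of a single element. The essential input is the multiplication table of $\Mt$: on $\Mt^+ \cong \F_2[\rho,\tau]$ multiplication is that of a polynomial ring and in particular admits no nonzero annihilators; one has $\rho^m \tau^n \cdot \frac{\theta}{\rho^c \tau^d} = \frac{\theta}{\rho^{c-m} \tau^{d-n}}$ when $c \geq m$ and $d \geq n$, and vanishes otherwise; and $\Mt^- \cdot \Mt^- = 0$ because $\theta^2 = 0$.

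For case II, where $J \subseteq \Mt^-$, I would fix an element $x = \frac{\theta}{\rho^m \tau^n} \in J$ and set $y = f(x)$. The first step is to rule out the possibility $y \in \Mt^+$: the identities $\rho^{m+1} x = 0$ and $\tau^{n+1} x = 0$ in $\Mt$ force $\rho^{m+1} y = 0$, but no nonzero element of the polynomial ring $\Mt^+$ is killed by a power of $\rho$. Hence $y = \frac{\theta}{\rho^a \tau^b}$ lies in $\Mt^-$, and reapplying the same two relations immediately forces $a \leq m$ and $b \leq n$, which is exactly the conclusion of case II.

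For case I, $J$ contains all of $\Mt^-$ and has at least one element $\rho^m \tau^n \in J \cap \Mt^+$; I would set $y = f(\rho^m \tau^n)$ and use that, by the multiplication rules recorded above, the annihilator of $\rho^m \tau^n$ in $\Mt$ is precisely $\bigl\{ \tfrac{\theta}{\rho^c \tau^d} : c < m \text{ or } d < n \bigr\}$. Case analysis on the cone containing $y$ then yields the two sub-cases. If $y = \rho^a \tau^b \in \Mt^+$, the requirement $\frac{\theta}{\rho^c \tau^d} \cdot y = 0$ for every annihilator translates into the inclusion $\{(c,d) : c < m \text{ or } d < n\} \subseteq \{(c,d) : c < a \text{ or } d < b\}$, which forces $a \geq m$ and $b \geq n$ (case (i)). If instead $y = \frac{\theta}{\rho^a \tau^b} \in \Mt^-$, the relation $\Mt^- \cdot \Mt^- = 0$ makes every annihilator condition automatic and no further restriction on $a, b$ arises (case (ii)).

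The bulk of the work is bookkeeping with the multiplication table of $\Mt$, and the main obstacle I anticipate is keeping track of the slightly asymmetric product rule between $\Mt^+$ and $\Mt^-$, which is nonzero precisely when the exponents align and vanishes otherwise. Once this table is set up carefully, the dichotomy and the specific inequalities in each case fall out by direct annihilator comparison, and the \emph{exactly one} clause is automatic: cases I and II are disjoint by Lemma \ref{ideal classification}, and within case I the sub-cases (i) and (ii) are distinguished by the cone in which the homogeneous element $y$ lives.
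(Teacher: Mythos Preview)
Your approach is essentially the same as the paper's: use $\Mt$-linearity to transport annihilator relations to the image and thereby constrain where $f$ can land. The paper phrases the constraints as direct contradictions (e.g.\ deriving $\theta = 0$ from a bad choice of exponents) rather than as an inclusion of annihilator sets, but this is cosmetic.

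There is, however, a genuine gap in your treatment of case I. You pick an arbitrary $\rho^m \tau^n \in J^+$, set $y = f(\rho^m \tau^n)$, and immediately split into the two cones containing $y$; but you never rule out $y = 0$. If your chosen $\rho^m \tau^n$ happens to map to zero, neither sub-case (i) nor (ii) applies, and the lemma asserts the \emph{existence} of some $m,n$ with the stated property. What you need is the paper's opening observation for type I: if $f(J^+) = 0$ then $f$ is identically zero. The argument is that $J^- = \Mt^-$, and every element $\tfrac{\theta}{\rho^a \tau^b}$ of $\Mt^-$ factors as $\tfrac{\theta}{\rho^{a+m}\tau^{b+n}} \cdot \rho^m \tau^n$ for any fixed $\rho^m \tau^n \in J^+$, so
\[
f\Bigl(\tfrac{\theta}{\rho^a \tau^b}\Bigr) = \tfrac{\theta}{\rho^{a+m}\tau^{b+n}} \cdot f(\rho^m \tau^n) = 0.
\]
Once this is in place, nontriviality of $f$ guarantees some $\rho^m \tau^n \in J^+$ with $f(\rho^m \tau^n) \neq 0$, and your cone-by-cone analysis goes through.

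In case II the analogous issue is harmless: since $J \subseteq \Mt^-$, nontriviality of $f$ immediately furnishes an $x \in J$ with $f(x) \neq 0$; you should say so explicitly when fixing $x$.
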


\begin{proof}
We will consider each type of ideal separately.
\begin{enumerate}[I.]
\item We begin by assuming $J$ is type I so that $J^+$ is nontrivial and $J^- = \Mt^-$.  If $f(J^+)= 0$ then $f(\rho^m \tau^n)=0$ for all $\rho^m \tau^n \in J^+$.  But then for any $a,b \geq 0$
\[
f\left( \frac{\theta}{\rho^a \tau^b} \right) = f\left( \frac{\theta}{\rho^{a+m} \tau^{b+n}} \cdot \rho^m \tau^n \right) = \frac{\theta}{\rho^{a+m} \tau^{b+n}} \cdot f(\rho^m \tau^n) = 0.
\]
So we also have $f(J^-) = 0$, contradicting that $f$ is nontrivial.  Hence there must be some $m,n \geq0$ such that $\rho^m \tau^n \in J^+$ and $f(\rho^m \tau^n) \neq 0$.  Now there are two cases, either $f(\rho^m \tau^n) \in \Mt^+$ or $f(\rho^m \tau^n) \in \Mt^-$.
\begin{enumerate}[(i)]
\item Suppose $f(\rho^m \tau^n) \in \Mt^+$ so that $f(\rho^m \tau^n) = \rho^a \tau^b$ for some $a, b \geq 0$.  We just need to show that $a \geq m$ and $b \geq n$.  This follows immediately because if either $a < m$ or $b < n$ then
\[
0 = f(0) = f\left( \frac{\theta}{\rho^a \tau^b} \cdot \rho^m \tau^n \right) = \frac{\theta}{\rho^a \tau^b} \cdot f(\rho^m \tau^n) = \theta
\]
in $\Mt$, a contradiction.
\item Now suppose $f(\rho^m \tau^n) \in \Mt^-$.  Then $f(\rho^m \tau^n) = \frac{\theta}{\rho^a \tau^b}$ for some $a,b \geq 0$.  There is no further restriction on the values of $a$ and $b$ in this case.
\end{enumerate}
\item Next we consider an ideal of type II, so that $J^+ = 0$. Since $f$ is nontrivial, there must be some $m, n \geq 0$ with  $f\big(\frac{\theta}{\rho^m \tau^n}\big) \neq 0$.
It is not possible that $f\big(\frac{\theta}{\rho^m \tau^n}\big) \in \Mt^+$ because if $f\big(\frac{\theta}{\rho^m \tau^n}\big) = \rho^a \tau^b$ for some $a, b \geq 0$, then we get an immediate contradiction
\[
0 = f\left( \rho^{m+1}\tau^{n+1} \cdot \frac{\theta}{\rho^m \tau^n} \right) = \rho^{m+1}\tau^{n+1} \cdot f\left(\frac{\theta}{\rho^m \tau^n}\right) = \rho^{a+m+1}\tau^{b+n+1}.
\]
So it must be the case that $f\big(\frac{\theta}{\rho^m \tau^n}\big) \in \Mt^-$.  Then $f\big(\frac{\theta}{\rho^m \tau^n}\big) = \frac{\theta}{\rho^a \tau^b}$ for some $m, n \geq 0$ and $a,b \geq 0$.  It remains to show that $m \geq a$ and that $n \geq b$.
If either $m < a$ or $n < b$, then again we get a contradiction because
\[
0 = f(0) = f\left( \rho^{a}\tau^{b} \cdot \frac{\theta}{\rho^m \tau^n} \right) = \rho^{a}\tau^{b} \cdot f\left(\frac{\theta}{\rho^m \tau^n} \right) = \rho^{a}\tau^{b} \cdot \frac{\theta}{\rho^a \tau^b} = \theta.
\]
\end{enumerate}
This completes the proof.
\end{proof}

It appears we have not fully described each map in the previous lemma since we only described the image of a single element.  The following lemma implies that any map of a graded ideal $f:\Sigma^{p,q} J \to \Mt$ is completely determined by a single nonzero element in its image.  Hence, Lemma \ref{map classification} does indeed classify maps of graded ideals to $\Mt$.

\begin{lemma}\label{unique}
Let $J \subseteq \Mt$ be a graded ideal with two maps $f,g: \Sigma^{p,q} J \to \Mt$.  If $x \in J$ is a nonzero homogeneous element with $f(x) \neq 0$ and $f(x) = g(x)$, then $f = g$.
\end{lemma}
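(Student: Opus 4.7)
The plan is to argue by contradiction. Set $h = f - g$, so that $h$ is a graded $\Mt$-module map $\Sigma^{p,q} J \to \Mt$ with $h(x) = 0$, and the goal is to prove $h \equiv 0$.

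The central observation is the following multiplicative identity, valid for every $y \in J$ by commutativity of $\Mt$ together with the $\Mt$-module axiom for $f$ and $g$:
\[
x \cdot h(y) \;=\; h(xy) \;=\; h(yx) \;=\; y \cdot h(x) \;=\; 0.
\]
Thus $x$ annihilates every element of $h(J) \subseteq \Mt$.

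Now suppose for contradiction that $h \not\equiv 0$. Then Lemma \ref{map classification} applies to $h$, which must take one of the three explicit forms (I.(i), I.(ii), or II), each specified by the value of $h$ at a single element. I would then split into cases based on whether $x \in \Mt^+$ or $x \in \Mt^-$.

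If $x \in \Mt^+$, then multiplication by $x$ is injective on $\Mt^+$ since $\Mt^+ \cong \F_2[\rho,\tau]$ is an integral domain, so any $h(y) \in \Mt^+$ must vanish. In Case I.(i) of Lemma \ref{map classification}, however, $h$ is forced to take nonzero values in $\Mt^+$ on a large subset of $J^+$, yielding an immediate contradiction. In types I.(ii) and II the values of $h$ lie in $\Mt^-$, but the bidegree of $h(y)$ combined with the annihilator of $x$ (a finite subset of $\Mt^-$) and the bidegree constraint on $(p,q)$ imposed by $f(x) \neq 0$ force $h(y) = 0$.

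If $x \in \Mt^-$, the multiplicative identity is weak because $x$ annihilates all of $\Mt^-$ and most of $\Mt^+$. Here I would instead use Lemma \ref{map classification} directly: both $f$ and $g$ are nontrivial (since $f(x) \neq 0$), and the bidegree and cone of the common value $f(x) = g(x)$ together with the fixed shift $(p,q)$ determine the explicit type of $f$ and $g$. The "shift-and-multiply" form from Lemma \ref{map classification} is then pinned down by the single common value at $x$, and propagating via the $\Mt$-module structure forces $f$ and $g$ to agree everywhere on $J$, contradicting $h \not\equiv 0$.

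The main obstacle is the case $x \in \Mt^-$: the multiplicative annihilation identity is nearly vacuous there, so the argument must rely almost entirely on the explicit structural classification of $\Mt$-module maps from Lemma \ref{map classification}, together with the bidegree constraints imposed by the nonvanishing of $f(x)$, to pin down both $f$ and $g$ to the same explicit formula.
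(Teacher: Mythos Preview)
Your annihilation identity $x \cdot h(y) = y \cdot h(x) = 0$ is a nice observation, and it does dispatch the sub-case where $x \in \Mt^+$ and $h$ is of type I.(i) in Lemma~\ref{map classification}. But the remainder of the argument has a genuine gap.

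The main problem is circularity in the case $x \in \Mt^-$. You write that Lemma~\ref{map classification} pins $f$ and $g$ down to a ``shift-and-multiply form'' determined by the single value $f(x) = g(x)$. But that is not what Lemma~\ref{map classification} says: it only asserts that a nontrivial map takes \emph{some} specified nonzero value at \emph{some} element. It does \emph{not} assert that the map is multiplication by an element of $\Mt$. That stronger statement is exactly the content of the Proposition immediately following, and its proof uses Lemma~\ref{unique}. So invoking it here would assume what you are trying to prove. The same circularity lurks in your handling of type I.(ii) when $x \in \Mt^+$: the phrase ``bidegree constraint on $(p,q)$ imposed by $f(x) \neq 0$ force $h(y) = 0$'' is not an argument, and making it precise seems to require knowing in advance that $h$ is multiplication by a fixed element.

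The paper's proof avoids this by never passing to $h = f - g$. Instead it works directly with the set $A = \{y : f(y) = g(y)\}$ and exploits the fact that each bidegree of $\Mt$ contains at most one nonzero element: if $m f(y) = m g(y) \neq 0$ then $f(y) = g(y)$. For an arbitrary $y$, one compares $f$ and $g$ at a common multiple $z = \operatorname{lcm}(x,y)$ (or, when $x \in J^-$ and $J$ is type~I, first lifts $x$ to an element $x' \in J^+$ with $x = m x'$). The crucial point is that the paper's argument shows \emph{directly} that $f(z) = g(z)$ is nonzero in the relevant cases, so the cancellation property applies; your annihilation identity only gives $x \cdot h(y) = 0$, which is too weak when $x$ has a large annihilator in $\Mt$.
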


\begin{proof}
We begin by observing that $\Mt$ has at most one nonzero element in any given bidegree.  This implies that if $m$, $x$, and $y$ are homogeneous elements of $\Mt$ with $mx \neq 0$ and $mx = my$, then $x = y$.  We call this property $P$.

Let $A = \{y \in J \mid f(y) = g(y)\}$.  Our goal is to show that $A = J$.  Notice that $A \subseteq J$ is a submodule because $f$ and $g$ are both $\Mt$-module maps.  Furthermore, if $m \in \Mt$ and $y \in J$ are homogeneous elements with $my \in A$ and $f(my) \neq 0$, then $y \in A$.  This is because
\[
mf(y)=f(my)=g(my)=mg(y)
\]
and by property $P$ we must have $f(y)=g(y)$.  We now proceed with several cases, considering each type of ideal separately.
\begin{enumerate}[I.]
\item Suppose $J$ is type I.  Then $J$ is finitely generated by some elements of $\Mt^+$ and $J^-=\Mt^-$.
\begin{enumerate}
\item Suppose $x \in J^+$.  The ideal generated by $x$ is type I so it contains $\Mt^-$.  Since $A$ is a submodule and $x$ is in $A$ by assumption, $J^- = \Mt^- \subseteq A$.  Consider a homogeneous element $y \in J-A$, which would have to be in $J^+$.  Set $z=\lcm(x,y)$ so that $z = \rho^a \tau^b y$ for some $a,b \geq 0$.  Either $f(x) \in \Mt^+$ or $f(x) \in \Mt^-$.
Using $z$, we can show in either case that in fact $y$ must be in $A$.
\begin{enumerate}
\item Suppose $f(x) \in \Mt^+$.  Since $z \in A$, $f(z) = g(z)$.  There is no $\rho$- or $\tau$-torsion in $\Mt^+$ so $f(z) \neq 0$.  By property $P$ we see that $f(y) = g(y)$ and indeed $y \in A$.
\item Suppose $f(x) \in \Mt^-$.  Again $f(z) = g(z)$ since $z \in A$.  It is possible that $f(z) = 0$ for degree reasons, in which case $f(y)=g(y) = 0$ for degree reasons as well.  Otherwise, $f(z) \neq 0$ and again by property $P$ we have $f(y)=g(y)$.
\end{enumerate}
\item Now suppose that $x \in J^-$ (though we still assume $J$ is type I).  Then we can write $x = mx'$ for some $x' \in J^+$ and $m \in \Mt^-$.  As usual, since $f(x) = f(mx') \neq 0$ and $mx' \in A$, property $P$ implies that $f(x') = g(x')$.  Now $x'$ satisfies case (a) above.  From the previous argument we have that $A = J$.
\end{enumerate}
\item Suppose $J$ is type II.  Then $x \in J^-$ because $J^+=0$.  Suppose $y \in J-A$ is a nonzero homogeneous element.  Set $z = \lcm(x,y)$ so that $z = \rho^a \tau^b y$ for some $a,b \geq 0$ and repeat the argument in case (a)(ii) above.  From the proof of Lemma \ref{map classification} it is not possible that $f(x) \in \Mt^+$, so we are done.
\end{enumerate}
This completes the proof that $A = J$ and so $f = g$.
\end{proof}

We are now ready to prove that $\Mt$ is self-injective.  The following proposition also appears as Proposition \ref{injectivity prop}.

\begin{prop}
The regular module $\Mt$ is injective as an $\Mt$-module.
\end{prop}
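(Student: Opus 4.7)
The plan is to verify the graded version of Baer's criterion that is stated at the start of the appendix. That reduces the proposition to the following check: for every graded ideal $J \subseteq \Mt$ and every graded $\Mt$-module map $f: \Sigma^{p,q} J \to \Mt$, there exists an element $\lambda \in \Mt$ (in bidegree $(p,q)$) such that $f(x) = \lambda x$ for all $x \in J$. If $f = 0$ there is nothing to prove, so I may assume $f$ is nontrivial.

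With $f$ nontrivial, Lemma \ref{map classification} places $f$ in one of three explicit forms, and in each I will write down $\lambda$ by inspection. In Case I(i), where $f(\rho^m\tau^n) = \rho^a\tau^b$ with $a \geq m$ and $b \geq n$, I take $\lambda = \rho^{a-m}\tau^{b-n}$. In Case I(ii), where $f(\rho^m\tau^n) = \frac{\theta}{\rho^a\tau^b}$, I take $\lambda = \frac{\theta}{\rho^{a+m}\tau^{b+n}}$, so that $\lambda \cdot \rho^m\tau^n = \frac{\theta}{\rho^a\tau^b}$. In Case II, where $f\bigl(\frac{\theta}{\rho^m\tau^n}\bigr) = \frac{\theta}{\rho^a\tau^b}$ with $m \geq a$ and $n \geq b$, I take $\lambda = \rho^{m-a}\tau^{n-b}$, so that $\lambda \cdot \frac{\theta}{\rho^m\tau^n} = \frac{\theta}{\rho^a\tau^b}$. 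In every case a quick bidegree check shows that $\lambda$ lands in the bidegree $(p,q)$ dictated by the suspension.

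Having produced $\lambda$, define $g: \Sigma^{p,q}J \to \Mt$ by $g(x) = \lambda x$. By construction $g$ agrees with $f$ on the single distinguished nonzero homogeneous generator used in the statement of Lemma \ref{map classification}, so Lemma \ref{unique} forces $g = f$ on all of $J$. Thus $f$ is the restriction of multiplication by $\lambda$, which visibly extends to $\Sigma^{p,q}\Mt$, and Baer's criterion is verified.

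I do not expect a real obstacle: the substantive work has already been carried out in Lemmas \ref{ideal classification}, \ref{map classification}, and \ref{unique}, which jointly classify the graded ideals of $\Mt$, classify the nontrivial maps out of them, and show that such a map is determined by its value at a single nonzero element. Given these, the proof of self-injectivity is essentially bookkeeping: one reads off the correct monomial or lower-cone element $\lambda$ from each case of the classification and invokes the rigidity lemma to finish.
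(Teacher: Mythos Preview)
Your proposal is correct and follows essentially the same approach as the paper's proof: both verify the graded Baer criterion by invoking the map classification lemma, writing down the same explicit $\lambda$ in each of the three cases, and then appealing to the uniqueness lemma to conclude $f$ equals multiplication by $\lambda$. The only cosmetic difference is that the paper omits the bidegree check you mention, but the structure and content are identical.
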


\begin{proof}
From the discussion of the graded version of Baer's criterion above, it suffices to show that for any graded ideal $J \subseteq \Mt$ and any map $f:\Sigma^{p,q} J \to \Mt$, there is an element $\lambda \in \Mt$ such that $f(x) = \lambda x$ for all $x \in J$.  Let $J$ be an ideal and $f:\Sigma^{p,q} J \to \Mt$.  Of course, if $f = 0$ then we can take $\lambda = 0$, so assume $f \neq 0$.
Every nontrivial map $f$ satisfies one of the cases described in Lemma \ref{map classification}, so we consider each separately.
\begin{enumerate}[I.]
\item Assume the ideal $J \subseteq \Mt$ is type I so that $J^+$ contains some elements of $\Mt^+$ and $J^- = \Mt^-$.
\begin{enumerate}[(i)]
\item Suppose $f(\rho^m \tau^n) = \rho^a \tau^b$ for some $m, n \geq 0$ and $a \geq m$, $b \geq n$.  Define $g:\Sigma^{p,q} J \to \Mt$ to be multiplication by $\lambda = \rho^{a-m} \tau^{b-n}$ so we have $g(x) = \rho^{a-m} \tau^{b-n} \cdot x$ for all $x \in J$.
Then $f(\rho^m \tau^n) = g(\rho^m \tau^n)$ and by uniqueness from Lemma \ref{unique} we have that $f = g$.
\item Suppose $f(\rho^m \tau^n) = \frac{\theta}{\rho^a \tau^b}$ for some $m, n \geq 0$ and $a,b \geq 0$.  Now we define $g:\Sigma^{p,q} J \to \Mt$ to be multiplication by $\lambda = \frac{\theta}{\rho^{a+m} \tau^{b+n}}$.
Again $f(\rho^m \tau^n) = g(\rho^m \tau^n)$ and by uniqueness $f = g$.
\end{enumerate}
\item Finally assume the ideal $J \subseteq \Mt$ is type II so that $J^+ = 0$.  We know in this case $f\big(\frac{\theta}{\rho^m \tau^n}\big) = \frac{\theta}{\rho^a \tau^b}$ for some $m, n \geq 0$ and $m \geq a \geq 0$,
$n \geq b \geq 0$.  Now $f$ agrees with multiplication by $\lambda = \rho^{m-a} \tau^{n-b}$.
\end{enumerate}
We have shown every map $f:\Sigma^{p,q} J \to \Mt$ is multiplication by some element $\lambda \in \Mt$, which completes the proof that the regular module is injective.
\end{proof}

\newpage


\bibliographystyle{abbrv}
\bibliography{refs}
\nocite{*}

\end{document}